\documentclass[a4paper, cleveref, autoref, thm-restate, numberwithinsect]{lipics-v2021}
\usepackage{mathtools}
\usepackage{physics}
\usepackage{graphicx}
\usepackage[dvipsnames, table]{xcolor}
\usepackage{cite}
\usepackage{bm}
\usepackage{booktabs}
\usepackage{diagbox}
\usepackage{MnSymbol}
\usepackage{complexity}
\usepackage{tabularx}
\usepackage{thm-restate}
\usepackage{xspace}
\usepackage{xargs}
\usepackage{ifthen}

\title{On \texorpdfstring{$\bm{t}$}{t}-colorable \texorpdfstring{$\bm{k}$}{k}-plane drawings}

\author{Miriam Goetze}{Karlsruhe Institute of Technology}{miriam.goetze@kit.edu}{https://orcid.org/0000-0001-8746-522X}{funded by the Deutsche Forschungsgemeinschaft (DFG, German Research Foundation) -- 520723789}
\author{Michael Kaufmann}{University of Tübingen}{michael.kaufmann@uni-tuebingen.de}{}{}
\author{Soeren Terziadis}{TU Munich and TU Eindhoven}{soeren.terziadis@ac.tuwien.ac.at}{https://orcid.org/0000-0001-5161-3841}{}

\authorrunning{M. Goetze, M. Kaufmann, S. Terziadis}

\Copyright{M. Goetze, M. Kaufmann, S. Terziadis}

\ccsdesc[500]{Mathematics of computing~Graphs and surfaces}

\keywords{beyond planar graphs, \texorpdfstring{$k$}{k}-planarity, edge density, crossing density, recognition problem, \texorpdfstring{$\NP$}{NP}-completeness}

\category{Track 1, Long Paper}

\acknowledgements{This research was initiated at the Workshop on \emph{Graph and Network Visualization} (GNV~2025) in Chania, Greece, June 22--27, 2025.}

\relatedversion{}

\nolinenumbers

\EventEditors{Maarten L\"{o}ffler and Silvia Miksch}
\EventNoEds{2}
\EventLongTitle{34th International Symposium on Graph Drawing and Network Visualization (GD 2026)}
\EventShortTitle{GD 2026}
\EventAcronym{GD}
\EventYear{2026}
\EventDate{August 19--21, 2026}
\EventLocation{Ontario, Canada}
\EventLogo{}
\SeriesVolume{396}
\ArticleNo{32}

\newsavebox{\imagebox}

\definecolor{lightgrayYES}{RGB}{217,217,217}
\newcommand{\shortref}[1]{{\color{eurocgbluedark}$\langle$\ref{#1}$\rangle$}}

\newcommand{\shortrefs}[2]{{\color{eurocgbluedark}$\langle$\ref{#1},\ref{#2}$\rangle$}}

\newcommand{\ra}[1]{\renewcommand{\arraystretch}{#1}}
\newcolumntype{Y}{@{\extracolsep{3pt}}>{\centering\arraybackslash}X@{\extracolsep{0pt}}}

\newcommand{\FVtII}{\raisebox{-4.5pt}{\includegraphics[page=1]{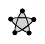}}}
\newcommand{\FVItII}{\raisebox{-4.5pt}{\includegraphics[page=3]{figures/small-icons_no_white_border.pdf}}}
\newcommand{\FVItIII}{\raisebox{-4.5pt}{\includegraphics[page=5]{figures/small-icons_no_white_border.pdf}}}
\newcommand{\FVIII}{\raisebox{-4.5pt}{\includegraphics[page=7]{figures/small-icons_no_white_border.pdf}}}

\definecolor{customcyan}{RGB}{35,161,224}
\definecolor{custompurple}{RGB}{163,16,124}
\definecolor{customorange}{RGB}{223,155,27}

\newtheorem{question}{Question}

\newcommand{\naetsat}{\textsc{Mnae3Sat}\xspace}
\newcommand{\tcsc}{$t$-CSC\xspace}
\newcommand{\boldtcsc}{$\bm{t}$-CSC\xspace}
\newcommand{\titletcsc}{\texorpdfstring{\bm{$t$}}{t}-CSC\xspace}

\newcommand{\N}{\mathbb{N}}
\DeclarePairedDelimiterX\set[1]\lbrace\rbrace{\def\given{\;\delimsize\vert\;}#1}

\DeclareMathOperator{\circc}{cc}
\DeclareMathOperator{\ccross}{cross}

\newcommand{\omitted}{$\bigstar$}

\definecolor{IPEgold}{RGB}{255,215, 0}
\definecolor{purple}{RGB}{196,100,170}
\definecolor{cyan2}{RGB}{112,194,235}
\definecolor{orange2}{RGB}{235,190,107}
\definecolor{maygreen3}{RGB}{221,233, 197}
\definecolor{signalblue4}{RGB}{191,207, 240}
\definecolor{signalblue2}{RGB}{107,143, 240}
\definecolor{eurocgblue}{RGB}{150,190,240}
\definecolor{eurocgbluedark}{RGB}{90,115,145}

\makeatletter
\def\renewtheorem#1{%
  \expandafter\let\csname#1\endcsname\relax
  \expandafter\let\csname c@#1\endcsname\relax
  \gdef\renewtheorem@envname{#1}
  \renewtheorem@secpar
}
\def\renewtheorem@secpar{\@ifnextchar[{\renewtheorem@numberedlike}{\renewtheorem@nonumberedlike}}
\def\renewtheorem@numberedlike[#1]#2{\newtheorem{\renewtheorem@envname}[#1]{#2}}
\def\renewtheorem@nonumberedlike#1{
\def\renewtheorem@caption{#1}
\edef\renewtheorem@nowithin{\noexpand\newtheorem{\renewtheorem@envname}{\renewtheorem@caption}}
\renewtheorem@thirdpar
}
\def\renewtheorem@thirdpar{\@ifnextchar[{\renewtheorem@within}{\renewtheorem@nowithin}}
\def\renewtheorem@within[#1]{\renewtheorem@nowithin[#1]}
\makeatother

\crefname{observation}{Obs.\!}{Obs.\!}
\Crefname{observation}{Obs.\!}{Obs.\!}
\crefname{lemma}{Lem.}{Lem.\!}
\Crefname{lemma}{Lem.}{Lem.\!}
\crefname{proposition}{Prop.}{Prop.\!}
\Crefname{proposition}{Prop.}{Prop.\!}
\crefname{theorem}{Thm.}{Thm.\!}
\Crefname{theorem}{Thm.}{Thm.\!}
\crefname{corollary}{Cor.\!}{Cor.\!}
\Crefname{corollary}{Cor.\!}{Cor.\!}
\crefname{appendix}{App.\!}{App.\!}
\Crefname{appendix}{App.\!}{App.\!}
\crefname{section}{Sec.\!}{Sec.\!}
\Crefname{section}{Sec.\!}{Sec.\!}
\crefname{figure}{Fig.\!}{Fig.\!}
\Crefname{figure}{Fig.\!}{Fig.\!}
\crefname{table}{Table}{Table}
\Crefname{table}{Table}{Table}

\begin{document}

\maketitle

\begin{abstract}
In this work, we introduce \emph{$t$-colorable $k$-plane drawings}, that is, drawings of graphs with a $t$-edge-coloring where every edge is crossed by at most $k$~edges of each color.
We give tight upper bounds on the edge- and crossing density for small values of~$t$ and~$k$ and show that the recognition of such drawings is $\NP$-complete if $t\geq 2$ and $k\geq1$.
\subparagraph{Generative AI Declaration}
No generative AI was used at any point of this research project.
\end{abstract}

\begin{figure}[tbp]
    \savebox{\imagebox}{\includegraphics[page=7]{figures/teaser.pdf}}
    \centering
    \subcaptionbox{\label{fig:ex_2-col_1-plane}}{\raisebox{\dimexpr.5\ht\imagebox-.5\height}{\includegraphics[page=5]{figures/teaser.pdf}}}
    \hfil
    \subcaptionbox{\label{fig:ex_k_3_3}}{\raisebox{\dimexpr.5\ht\imagebox-.5\height}{\includegraphics[page=9]{figures/teaser.pdf}}}
    \hfil
    \subcaptionbox{\label{fig:ex_2-col_2-plane}}{\raisebox{\dimexpr.5\ht\imagebox-.5\height}{\includegraphics[page=8]{figures/teaser.pdf}}}
    \hfil
    \subcaptionbox{\label{fig:ex_3-col_1-plane}}{\usebox{\imagebox}}
    \caption{Examples of $t$-colorable $k$-plane drawings for (a)-(b) $t=2, k=1$, (c) $t=k=2$, as well as (d)~$t=3, k=1$.}
    \label{fig:teaser}
\end{figure}

\section{Introduction}
 \label{sec:intro}

Beyond planar graphs have been studied extensively over the last decades.
A particular focus has been on \emph{$k$-plane drawings}, that is drawings of graphs where every edge is crossed at most $k$~times.
For $k \in \set{1,2,3}$ the structure of such drawings is well-understood \cite{bekos2023optimal-2-3-plane} and bounds on the edge density \cite{bodendiek1983bemerkungen, pach1996graphsFewCrossings, pach1997graphsFewCrossings, pach2004improving, pach2006improving} and crossing density \cite{bekos2024k-planar, goetze2025crossing} have been obtained.
While it is easy to check whether a given drawing is $k$-plane (for given $k$), it is $\NP$-hard to recognize graphs which admit such drawings \cite{urschel2021testingGapK-planarity}.

In this work, we consider similar questions for \emph{$t$-colorable $k$-plane drawings}, that is drawings that admit a $t$-edge-coloring such that each edge has at most $k$~crossings with edges of each color, see \cref{fig:teaser} for an illustration.
Such edge-colorings correspond to frugal colorings (see \cref{sec:prelim} for a definition) of
string graphs.
Upper bounds on the minimum number of colors needed in proper $k$-frugal colorings have been obtained previously for planar graphs \cite{aminiFrugalColouringGraphs2007} and graphs of bounded maximum degree \cite[Theorem~3.5]{kangFrugalAcyclicStar2011}. Recognizing graphs that admit proper $k$-frugal $t$-vertex colorings is $\NP$-hard for almost all $k,t \in \N$ \cite{bardComplexityFrugalColouring2021}.

We give upper bounds on the edge density for small~$t$ and~$k$ (see \cref{tab:overview_edge_density} for an overview) and on the crossing density of $2$-colorable $1$-plane drawings (cf. \cref{sec:crossing_density_2-col_1-plane}).
Further, we investigate the complexity of recognizing $t$-colorable $k$-plane drawings, see \cref{tab:overview_complexity} for an overview.
Results marked with \omitted{} are proven in the appendix.

\ra{1.3}
\def\smallDist{17}
\def\largeDist{40}
\begin{table}[tbp]
\centering
\subcaptionbox{\label{tab:overview_edge_density}}{
\centering
\begin{tabular*}{\linewidth}{c@{\extracolsep{0pt}} @{\extracolsep{\smallDist pt}}c@{\extracolsep{0pt}} @{\extracolsep{\smallDist pt}}c@{\extracolsep{0pt}} @{\extracolsep{\smallDist pt}}c@{\extracolsep{0pt}} @{\extracolsep{\smallDist pt}}}
 \toprule
        &  $1$-colorable & $2$-colorable & $3$-colorable  \\
        \midrule
        \arrayrulecolor{white}
        $k=1$  & \multicolumn{1}{|c|}{\cellcolor{lightgray}$4(n-2)$ \cite{bodendiek1983bemerkungen}} & $4.\overline{3}(n-2)$ \shortrefs{cor:edge_density_2-col_1-plane_upper}{prop:edge_density_2-col_1-plan_lower} & $5(n-2)$ \shortrefs{cor:edge_density_3-col_1-plane_upper}{prop:edge_density_3-col_1-plan_lower}
        \\
        \cmidrule{2-3}
        $k=2$  & \multicolumn{1}{|c|}{\cellcolor{lightgray}$5(n-2)$ \cite{pach1996graphsFewCrossings, pach1997graphsFewCrossings}} & \multicolumn{1}{|c|}{$6(n-2)$ \cite{ackerman2019topological,bungener2025simplified} \shortref{thm:edge-density_2-col_2-plane}}
        &   Open   \\
        \arrayrulecolor{black}
        \bottomrule
        \vspace{0.01cm}
\end{tabular*}}
\subcaptionbox{\label{tab:overview_complexity}}{\begin{tabular*}{\linewidth}{c@{\extracolsep{0pt}} @{\extracolsep{\smallDist pt}}c@{\extracolsep{0pt}} @{\extracolsep{\smallDist pt}}c@{\extracolsep{0pt}} @{\extracolsep{\smallDist pt}}c@{\extracolsep{0pt}} @{\extracolsep{\smallDist pt}}c@{\extracolsep{0pt}} @{\extracolsep{\smallDist pt}}}
        \toprule
        & $1$-colorable & $2$-colorable & $3$-colorable & $(t>3)$-colorable \\
        \midrule
        \arrayrulecolor{white}
        $k=1$  & trivial & $\mathcal{O}(n+m)$ \shortref{thm:recognition_2_1} & $\NPC$ \shortref{thm:recognition_3_1} & $\NPC$ \shortref{thm:recognition_k_1} \\
        \cmidrule{2-4}
        $k=2$  & trivial & $\NPC$ \shortref{thm:recognition_2_2} & {\cellcolor{eurocgblue}} $\NPC$ \shortref{thm:recognition_collected} & {\cellcolor{eurocgblue}} $\NPC$ \shortref{thm:recognition_collected} \\
        \cmidrule{2-4}
        $k > 2$  & trivial & {\cellcolor{eurocgblue}} $\NPC$ \shortref{thm:recognition_collected} & {\cellcolor{eurocgblue}} $\NPC$ \shortref{thm:recognition_collected} &  {\cellcolor{eurocgblue}} $\NPC$ \shortref{thm:recognition_collected} \\
        \arrayrulecolor{black}
        \bottomrule
    \end{tabular*}}
\smallskip
\caption{
    Overview of results for $t$-colorable $k$-plane drawings.\;\;\textcolor{lightgray}{$\blacksquare$} previously known\;\;\textcolor{eurocgblue}{$\blacksquare$} implied by \cref{thm:recognition_collected}.
    (a) Maximum number of edges for drawings on $n$ vertices.
  (b) Recognition complexity.}
\end{table}

\subparagraph*{Related Work.}
The thickness of a graph~$G$ is the smallest number~$t$ of colors such that the edges of~$G$ can be decomposed into $t$~planar graphs.
In fact, if~$G$ has thickness~$t$, it admits a drawing~$\Gamma$ and a $t$-edge-coloring such that each color class of~$\Gamma$ is plane
\cite[p.\,95]{duncanGraphThicknessGeometric2011}.
We refer to \cite{mutzelThicknessGraphsSurvey1998} for an overview of different variants of thickness, each of which restricts the drawing~$\Gamma$ or the planar graphs with which we cover the edges.
A variant where each color class is $k$-plane does not seem to have been studied.
However, we emphasize that for a drawing to be $t$-colored $k$-plane, it is not enough to require that each color class is $k$-plane.
Consider for example a $2$-edge-coloring of a  drawing~$\Gamma$ of~$K_5$ where each color class is plane, see \cref{fig:ex_k_3_3}.
While the drawing~$\Gamma$ contains no monochromatic crossings, it does contain multi-colored crossings.
As such, $\Gamma$ is $2$-colorable $1$-plane, but not $t$-colorable $0$-plane for any $t$.

\section{Preliminaries and notation}\label{sec:prelim}

We consider drawings in the plane, where vertices are represented by points and edges by simple Jordan curves.
Edges only cross in internal points, do not share an infinite number of points or touch (sharing a point without crossing), edges do not self-intersect and no more than two edges cross in a point, neither may adjacent edges cross.

The drawings we consider may contain parallel edges (but no loops).
In general, the number of edges may be arbitrarily large.
However, we only consider \emph{non-homotopic} drawings.
A drawing is non-homotopic if every region whose boundary forms a \emph{lens}, i.e.\ it is formed by (parts) of exactly two edges, contains a vertex or a crossing in its interior, see \cref{fig:lenses} for examples.
An edge is \emph{plane} if it is uncrossed in~$\Gamma$.
We call the subdrawing of a drawing~$\Gamma$ consisting of all plane edges the \emph{plane skeleton} of~$\Gamma$.
\begin{figure}
    \centering
    \subcaptionbox{\label{fig:example_lens-1}}{\includegraphics[page=1]{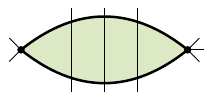}}
    \hfil
    \subcaptionbox{\label{fig:example_lens-2}}{\includegraphics[page=2]{figures/examples_lenses.pdf}}
    \hfil
    \subcaptionbox{\label{fig:example_lens-3}}{\includegraphics[page=3]{figures/examples_lenses.pdf}}
    \caption{Examples of lenses (lightgreen) with their boundary (thick). An empty lens (a) cannot appear in a non-homotopic drawing, while lenses that contain a crossing or a vertex  (b)-(c) may. }
    \label{fig:lenses}
\end{figure}

\subparagraph*{\texorpdfstring{$\bm{t}$}{t}-colorable \texorpdfstring{$\bm{k}$}{k}-plane drawings.}
We call a graph~$G$ \emph{$t$-colorable $k$-planar} if it admits a drawing~$\Gamma$ and a (not necessarily proper) $t$-edge-coloring~$\varphi\colon E(G) \to [t]$ such that each edge is crossed by at most $k$~edges in~$\Gamma$ for each of the $t$~colors.
(As usual $[t] = \{1, \ldots, t\}$.)
The drawing~$\Gamma$ is called \emph{$t$-colorable $k$-plane} and its witness a \emph{$k$-plane} $t$-edge-coloring.
\begin{observation}\label{obs:kt-plane}
Every $t$-colorable $k$-plane drawing is $kt$-plane.
\end{observation}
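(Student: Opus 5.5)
The plan is a direct counting argument using the witness coloring. Let $\Gamma$ be a $t$-colorable $k$-plane drawing and let $\varphi\colon E(G)\to[t]$ be a $k$-plane $t$-edge-coloring witnessing this. Fix an arbitrary edge $e$ of $\Gamma$. We want to show that $e$ is crossed at most $kt$ times in total.

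First, partition the crossings on $e$ according to the color of the other edge. Every edge $f$ that crosses $e$ receives exactly one color $\varphi(f)\in[t]$. Hence the set of edges crossing $e$ is the disjoint union of the sets $C_i(e)$, for $i\in[t]$, where $C_i(e)$ consists of the edges of color $i$ that cross $e$.

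Second, apply the defining property of the coloring: $|C_i(e)|\le k$ for every $i\in[t]$. Summing over the $t$ colors, $e$ is crossed by at most $kt$ edges.

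The only point needing care is that the definition bounds the number of crossing \emph{edges}, whereas $kt$-planarity bounds the number of \emph{crossings}. This is where I would expect the only (minor) obstacle. I would resolve it by arguing that, under the drawing conventions of \cref{sec:prelim}, two edges may cross more than once. Therefore I would read ``crossed by at most $k$ edges of each color'' as counting crossings with multiplicity, consistent with the paper's reading of ``$k$-plane'' as ``every edge is crossed at most $k$ times''. With that reading, the same partition applied to crossing points gives the bound $kt$ on the number of crossings of $e$. Since $e$ was arbitrary, $\Gamma$ is $kt$-plane.
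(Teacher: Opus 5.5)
Your proof is correct and is exactly the reasoning behind this observation, which the paper states without a separate proof: the edges crossing $e$ split by color into $t$ classes of size at most $k$, so $e$ is crossed at most $kt$ times. Your care about crossings versus crossing edges is sound, since the introduction phrases the condition as at most $k$ crossings with edges of each color; and if each pair of edges crosses at most once, as the conflict-graph formulation in \cref{obs:correspondence_frugal_conflict_graph} tacitly assumes, the two counts coincide anyway.
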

Yet, we will see that there are $kt$-plane drawings that are not $t$-colorable $k$-plane (cf. \cref{thm:tk-plane_not_t-col_k-plane}).

\subparagraph*{Segments, cells and configurations.}
An edge with  $\ell$ crossings is split into $\ell +1$ parts.
The parts incident to a vertex are called \emph{outer segments}, the other \emph{inner} segments.
The labeled embedded graph obtained by replacing each crossing with a vertex is the \emph{planarization $\Lambda(\Gamma)$} of drawing~$\Gamma$.
The labels indicate whether a vertex of~$\Lambda(\Gamma)$ corresponds to a~vertex~or a crossing of~$\Gamma$.
Regions of~$\Gamma$ that correspond to faces of~$\Lambda(\Gamma)$ are \emph{cells}, denoted by~$C(\Gamma)$.
The boundary~$\partial c$ of a cell~$c \in C(\Gamma)$ consists of a set of closed walks.
A walk may contain the same segment twice.
A \emph{configuration} is a labeled embedded subgraph of the planarization~$\Lambda(\Gamma)$.
Intuitively, it is an arrangement of one or several adjacent cells of~$\Gamma$.
Two configurations are of the same \emph{type} if they are isomorphic as labeled embedded subgraphs (up to reflection).
We denote different types by small pictograms such as $\FVItIII$ or~$\FVtII$, see \cref{fig:example_configuration} for an illustration.\noindent
\begin{figure}
    \centering
    \includegraphics[page=2]{figures/example_configuration.pdf}
    \caption{A drawing that contains a $\FVtII$- (\textcolor{signalblue2}{$\blacksquare$}) and a $\FVItIII$-configuration (\textcolor{maygreen3}{$\blacksquare$}). While another region~(\textcolor{signalblue4}{$\blacksquare$}) contains a subdrawing isomorphic to a $\FVtII$-graph, it is no $\FVtII$-configuration. }
    \label{fig:example_configuration}
\end{figure}

\subparagraph*{\texorpdfstring{$\bm{k}$}{k}-frugal colorings.}
\label{par:frugal_colorings}
For a drawing~$\Gamma$, consider the \emph{conflict graph}~$H$ with vertices $V(H) = E(\Gamma)$ corresponding to the edges of~$\Gamma$.
Two vertices in~$V(H)$ are joined by an edge in $E(H)$ if the corresponding edges~$e,e'$ cross in~$\Gamma$.
If~$\Gamma$ is $t$-colorable $k$-plane, its edge-coloring corresponds to a vertex-coloring~$\Phi\colon V(H) \to [t]$ where each color class appears at most $k$~times in the neighborhood of each vertex,
that is~$\abs{N(v) \cap \Phi^{-1}(i)} \leq k$ for every~$v \in V(H)$ and each color~$i \in [t]$.
Note that~$\Phi$ need not be proper.
Such a coloring of a graph~$H$ is called \emph{$k$-frugal}.
The minimum number of colors~$\varphi_k(H)$ among all $k$-frugal colorings of~$H$ is the \emph{$k$-frugal chromatic number}.

A $k$-plane $t$-edge-coloring of a drawing~$\Gamma$ corresponds to a $k$-frugal $t$-vertex-coloring of its conflict graph~$H$.
Hence, we are interested in (not necessarily proper) $k$-frugal colorings of \emph{string graphs}, i.e., intersection graphs of Jordan curves.
\begin{observation}
\label{obs:correspondence_frugal_conflict_graph}
    A drawing~$\Gamma$ is $t$-colorable $k$-plane if and only if its conflict graph~$H$ admits a $k$-frugal $t$-vertex-coloring.
\end{observation}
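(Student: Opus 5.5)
The statement is essentially a direct translation between two definitions, so I will prove both directions by transporting the coloring along the identity map $V(H) = E(\Gamma)$. Fix the drawing~$\Gamma$ and its conflict graph~$H$. For any vertex $v \in V(H)$, corresponding to an edge $e \in E(\Gamma)$, the neighborhood $N(v)$ is by definition exactly the set of edges of~$\Gamma$ that cross~$e$. The one point to check is that "crossed by at most $k$ edges of color~$i$" counts distinct edges rather than crossing points. This is immediate from the definition of $t$-colorable $k$-plane, which speaks of edges, and it matches $N(v)$ being a set of vertices of~$H$.

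\emph{Forward direction.} Suppose $\varphi\colon E(\Gamma)\to[t]$ is a $k$-plane $t$-edge-coloring. Define $\Phi\colon V(H)\to[t]$ by $\Phi(v)=\varphi(e)$, where $v$ corresponds to~$e$. For each $v$ and each $i\in[t]$, the set $N(v)\cap\Phi^{-1}(i)$ is in bijection with the set of edges of color~$i$ that cross~$e$. That set has size at most~$k$ by assumption, so $\Phi$ is $k$-frugal.

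\emph{Backward direction.} Given a $k$-frugal $t$-vertex-coloring $\Phi$ of~$H$, define $\varphi(e)=\Phi(v)$ and read the same bijection backwards. Properness of $\Phi$ is never needed in either direction, consistent with the remark that $\Phi$ need not be proper.

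I do not expect a real obstacle here. The only subtlety is the counting convention above: an edge pair crossing several times would contribute one neighbor in~$H$ but several crossings. The definition bounds the number of crossing edges per color, so the correspondence is exact and the observation follows.
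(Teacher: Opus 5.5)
Your proof is correct and matches the paper's reasoning: the paper states the observation without a separate argument, noting only that a $k$-plane $t$-edge-coloring of $\Gamma$ and a $k$-frugal $t$-vertex-coloring of $H$ are the same map under the identification $V(H)=E(\Gamma)$, because $N(v)$ is exactly the set of edges crossing the corresponding edge. Your remark that the formal definition counts crossing edges rather than crossing points, so that repeated crossings of a pair cause no discrepancy, is a sensible precaution the paper leaves implicit.
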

\begin{observation}
\label{obs:frugal_lower_bound}
    For a graph~$H$ of maximum degree~$\Delta(H)$, we have $\varphi_k(H) \geq \frac{\Delta(H)}{k}$.
\end{observation}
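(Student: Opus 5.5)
The statement is \cref{obs:frugal_lower_bound}: for every $k \geq 1$, every graph $H$ satisfies $\varphi_k(H) \geq \Delta(H)/k$. I will prove it by pigeonhole at a single vertex of maximum degree.

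Fix an arbitrary $k$-frugal coloring $\Phi\colon V(H)\to[t]$ with $t = \varphi_k(H)$ colors. Choose a vertex $v$ with $\deg(v)=\Delta(H)$. The color classes $\Phi^{-1}(1),\dots,\Phi^{-1}(t)$ partition $V(H)$, so the sets $N(v)\cap\Phi^{-1}(i)$ for $i\in[t]$ partition $N(v)$. This gives
\[
\Delta(H) = \abs{N(v)} = \sum_{i=1}^{t} \abs{N(v)\cap\Phi^{-1}(i)} \leq t\cdot k ,
\]
where the inequality uses the frugality condition $\abs{N(v)\cap\Phi^{-1}(i)}\leq k$ for each color $i$. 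Dividing by $k$ yields $t\geq \Delta(H)/k$, which is the claim. Since $t$ is an integer, we in fact get $\varphi_k(H)\geq\lceil\Delta(H)/k\rceil$.

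The argument does not need $\Phi$ to be proper. It makes no difference whether $v$'s own color appears among its neighbors, because only the neighborhood counts enter the sum. There is no real obstacle here. The only care needed is to use that the color classes partition the neighborhood, and to handle the edge case $\Delta(H)=0$, where the bound is trivially true.

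By \cref{obs:correspondence_frugal_conflict_graph}, this also shows that a drawing containing an edge crossed $\Delta$ times needs at least $\lceil\Delta/k\rceil$ colors to be $k$-plane colored.
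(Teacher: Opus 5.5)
Your argument is correct. It is exactly the pigeonhole count at a maximum-degree vertex behind this observation, which the paper states without proof and applies just as you do, e.g.\ to conclude $\varphi_k(H)=t$ for the $t$-blow-up of $K_{k+1}$ in \cref{prop:blow_up_properties}. One caveat concerns only your closing aside: the degree of an edge in the conflict graph counts distinct crossing edges, not crossings, and the paper's drawings do not forbid two edges crossing twice, so ``crossed $\Delta$ times'' should read ``crossed by $\Delta$ distinct edges''.
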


Proper frugal colorings have been introduced by Hind, Molloy and Reed \cite{hindColouringGraphFrugally1997}.
They relate the proper $k$-frugal chromatic number to the maximum degree~$\Delta(H)$ of~$H$: every graph~$H$ admits a proper $(\Delta(H)+1)$-vertex-coloring. Yet, what is the minimum~$k$ such that some proper $(\Delta(H)+1)$-vertex-coloring is $k$-frugal?
The authors give upper bounds on~$k$ in terms of~$\Delta(H)$.
An asymptotic upper bound provided by Molloy and Reed \cite{molloyAsymptoticallyOptimalFrugal2010} is known to be tight \cite{hindColouringGraphFrugally1997}.
Amini, Esperet and van den Heuvel provide upper bounds on the proper $k$-frugal chromatic number for planar graphs \cite{aminiFrugalColouringGraphs2007}.
For $k=1$, many more results are known as proper $1$-frugal vertex-colorings coincide with proper vertex colorings of the square of~$H$, i.e.\ the graph on the same vertex set with edges between vertices at distance at most~$2$ in~$H$. See \cite[p.\,26]{chenGraphRhuedColorings2022} for an overview on colorings of the square.
For \emph{proper} $k$-frugal colorings, the computational complexity has been studied:
Bard, MacGillivray and Redlin characterize all fixed pairs of integers~$t,k \in \N$ such that deciding whether a given graph~$H$ admits a proper $k$-frugal $t$-vertex-coloring is~$\NP$-hard \cite[Theorem~1.2]{bardComplexityFrugalColouring2021}.
Dropping the condition of the coloring to be proper, Kang and Müller expand the study of frugal colorings \cite{kangFrugalAcyclicStar2011}.

\section{Edge density of \texorpdfstring{$\bm{t}$}{t}-colorable \texorpdfstring{$\bm{k}$}{k}-planar graphs for small \texorpdfstring{$\bm{t}$}{t} and \texorpdfstring{$\bm{k}$}{k}}
\label{sec:edge_density}
In this section, we take a closer look at the maximum number of edges in $t$-colorable $k$-planar graphs for $kt \leq 4$.
Recall that every such graph is in particular $kt$-planar (cf. \cref{obs:kt-plane}).
For $kt \leq 3$, some configurations appear in every sufficiently dense $kt$-plane drawing \cite{buengener2024improvingCrossingNumberDense2Planar3Planar}.
Yet, they may not be part of any $t$-colorable $k$-plane drawing due to the coloring restriction.
For $t=2, k=1$, this holds for $\FVtII$- and $\FVItII$-configurations (cf. \cref{sec:edge_density_2-col_1-plane}), and for $t=3,k=1$ for all $\FVItIII$-configurations (\cref{sec:edge_density_3-col_1-plane}).
The densest $4$-plane drawings turn out to be $2$-colorable $2$-plane~(\cref{sec:edge_density_2-col_2-plane}).
In each setting, this yields an upper bound on the edge density, with a matching lower bound based on modifications of the densest $kt$-plane drawings.
\begin{theorem}
\label{thm:edge_density_overview}
    Let~$\Gamma$ be a non-homotopic drawing of a graph on $n$ vertices.
    \begin{enumerate}[(i)]
        \item\label{itm:edge_density_2-col_1-plane} If~$\Gamma$ is $2$-colorable $1$-plane, then $\abs{E(\Gamma)} \leq 4.\overline{3}(n-2)$.
        \item If~$\Gamma$ is $3$-colorable $1$-plane, then $\abs{E(\Gamma)} \leq 5(n-2)$.
        \item If~$\Gamma$ is $2$-colorable $2$-plane, then $\abs{E(\Gamma)} \leq 6(n-2)$.
    \end{enumerate}
    In each case, there exists an infinite family of drawings which matches the upper bound.
\end{theorem}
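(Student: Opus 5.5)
The three parts share one template. For each $(t,k)$ with $kt\le 4$ we use \cref{obs:kt-plane}: $\Gamma$ is $kt$-plane. We then invoke a density (discharging) argument for $kt$-plane drawings. The argument is sharpened by showing that the configurations forcing the weaker bound cannot occur under the coloring restriction.

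\textbf{Density argument.} Work with the planarization $\Lambda(\Gamma)$. Give every cell $c$ the charge $\abs{c}+\mathrm{v}(c)-4$, where $\abs{c}$ is the number of segments on $\partial c$ and $\mathrm{v}(c)$ is the number of vertices of $\Gamma$ on $\partial c$. By Euler's formula on $\Lambda(\Gamma)$ (crossings have degree~$4$) the total charge is
\[ \sum_{c} \bigl(\abs{c}+\mathrm{v}(c)-4\bigr) = 4n-8 .\]
Charge is then redistributed so that each cell $c$ ends with at least $\alpha\cdot \mathrm{v}(c)$ for a constant $\alpha$. Since $\sum_c \mathrm{v}(c)=\sum_v \deg v = 2\abs{E}$, this gives $\abs{E}\le \frac{2}{\alpha}(n-2)$. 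The target bounds are:
\begin{itemize}
\item $\alpha=6/13$ for (i), giving $\frac{2}{\alpha}=\frac{13}{3}=4.\overline{3}$;
\item $\alpha=2/5$ for (ii), giving $5$;
\item $\alpha=1/3$ for (iii), giving $6$.
\end{itemize}

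\textbf{Excluding the bad configurations.} The discharging schemes of \cite{buengener2024improvingCrossingNumberDense2Planar3Planar} for $2$- and $3$-plane drawings already show which cells are deficient: small cells, i.e.\ triangles and quadrilaterals with few vertices, inside $\FVtII$-, $\FVItII$- and $\FVItIII$-configurations. These are exactly what attains the $5(n-2)$ and $5.5(n-2)$ bounds.

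For (i) we show that no $2$-colorable $1$-plane drawing contains a $\FVtII$- or $\FVItII$-configuration. In these configurations some edge $e$ is crossed twice by two edges $f,g$ that also cross each other, or that cross a third edge pairwise. The $1$-plane coloring forces $f$ and $g$ to receive distinct colors. The remaining crossings then force two edges of the same color onto some edge, which is a contradiction. This is a short case check over the pictograms.

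For (ii) we show analogously that a $\FVItIII$-configuration has a crossing triangle among its edges plus an extra crossing that exhausts the three colors.

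With these configurations excluded, we rerun the discharging. We move charge from large cells (at least $5$ segments, or cells with enough vertices) to adjacent small cells across shared segments. The obstacle-free neighborhoods guarantee that each deficient cell has enough rich neighbors to reach $\alpha\cdot\mathrm{v}(c)$.

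\textbf{Case (iii).} This case is easier. A $2$-colorable $2$-plane drawing is $4$-plane, and the bound $6(n-2)$ for non-homotopic $4$-plane drawings (\cite{ackerman2019topological,bungener2025simplified}) applies directly. Only tightness remains.

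\textbf{Tightness.} We modify the known extremal families.
\begin{itemize}
\item For (iii), take the densest $4$-plane drawings: a planar hexagonal skeleton with each hexagonal face filled by all diagonals. We exhibit a $2$-coloring in which each diagonal meets at most $2$ crossings per color. The short and long diagonals can be colored so that every diagonal sees at most two crossings of each color, and this is checked on a single hexagon.
\item For (ii), take pentagonal faces with all $5$ diagonals, which is the optimal $2$-plane structure giving $5(n-2)$. Color the pentagram diagonals with $3$ colors so that each diagonal, which has $2$ crossings, sees distinct colors.
\item For (i), mix faces of the plane skeleton so that the average matches $13/3$, e.g.\ a combination of crossed quadrilaterals and pentagon-like gadgets as in \cref{fig:ex_2-col_1-plane}. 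Verify the count with Euler's formula.
\end{itemize}

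\textbf{Main obstacle.} I expect the hardest step to be the discharging for (i) with the fractional constant $6/13$. Excluding $\FVtII$ and $\FVItII$ leaves many residual small-cell configurations whose charge must be fully accounted for. The fractional ratio also suggests that tight local patterns mix several cell types. I would first try to show that every deficient cell is adjacent to a cell with at least $5$ segments, and then bound how many deficient cells share a given rich cell.
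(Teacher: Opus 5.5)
\textbf{Gap: the upper bounds in (i) and (ii).} Your charge identity $\sum_c(\abs{c}+\mathrm{v}(c)-4)=4n-8$ is correct, and so is the conversion of a final charge of $\alpha\,\mathrm{v}(c)$ per cell into $\abs{E}\le\frac{2}{\alpha}(n-2)$. But for (i) and (ii) this is only bookkeeping. No redistribution rule is ever stated, and ``the obstacle-free neighbourhoods guarantee enough rich neighbours'' is exactly the claim that needs proof, as you concede. As written, (i) and (ii) are unproved; a complete discharging along your lines would amount to reproving Büngener and Kaufmann's results.

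The missing idea is to use those results as a black box. By \cref{thm:oversaturation_2_planar,thm:oversaturation_3_planar}:
a non-homotopic $2$-plane drawing with $4.\overline{3}(n-2)+x$ edges, $0\le x\le 0.\overline{6}(n-2)$, contains at least $x$ configurations of type $\FVtII$ or $\FVItII$;
a non-homotopic $3$-plane drawing with $5(n-2)+x$ edges, $0\le x\le 0.5(n-2)$, contains at least $x$ of type $\FVItIII$.
Since $2$- and $3$-plane drawings have at most $5(n-2)$ and $5.5(n-2)$ edges, \cref{obs:kt-plane} together with the exclusion of these configurations gives (i) and (ii) at once.

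\textbf{Your exclusion arguments describe the wrong mechanism.} The crossed edges of $\FVtII$ and $\FVItII$ form an inner cycle, i.e.\ a component $C_5$ resp.\ $C_6$ of the conflict graph. So two edges crossing the same edge neither cross each other nor share a second crossing partner. What works is \cref{obs:2-plane_inner_cell}: the two crossing partners of each edge on such a cycle need different colours. This forces the pattern blue, blue, orange, orange, and hence length $\equiv 0 \pmod 4$.

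In $\FVItIII$ (six short and two long diagonals of a hexagon), no three edges cross pairwise; the only such triple in a hexagon is the three long diagonals. The argument of \cref{lem:no_f6t3_in_3-col_1_plane} is different. Two diagonals at a common endpoint $v_j$ both cross the short diagonal $v_{j-1}v_{j+1}$, so they get distinct colours. Then the two long diagonals and one short triangle force two adjacent short diagonals of the other triangle to share a colour.

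\textbf{Tightness.} Part (iii) is the paper's construction.

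For (ii), your pentagram construction is a valid alternative to the paper's hexagonal faces with seven diagonals. The pentagram's conflict graph is $C_5$. Requiring the two crossing partners of each diagonal to differ amounts to a proper colouring of another $C_5$, which is possible with three colours. A pentagonal skeleton (e.g.\ the family used in \cref{prop:edge_density_2-col_1-plan_lower}) then gives $\frac53(n-2)+5\cdot\frac23(n-2)=5(n-2)$ edges.

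For (i), your suggestion cannot reach the bound. Suppose the plane skeleton is connected with faces of circumferences $s_i$, and $d_i$ edges are inserted into face $i$. Then $\abs{E}=3(n-2)+\sum_i(d_i-s_i+3)$ with $\sum_i(s_i-2)=2(n-2)$. So each face contributes at rate $\frac{d_i-s_i+3}{s_i-2}$ per unit of $s_i-2$:
a crossed quadrilateral has rate $\frac12$ (density $4$);
a pentagon with four diagonals has rate $\frac23$ (density $4.\overline{3}$).
Any mixture of crossed quadrilaterals with such pentagons therefore stays strictly below $4.\overline{3}(n-2)$. You need a pentagonal skeleton with four of the five diagonals in every face. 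Their crossings form a path $e_1e_2e_3e_4$ in the conflict graph, coloured blue, blue, orange, orange, as in \cref{prop:edge_density_2-col_1-plan_lower}.
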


\subsection{\texorpdfstring{$\bm{2}$}{2}-colorable \texorpdfstring{$\bm{1}$}{1}-plane drawings}
\label{sec:edge_density_2-col_1-plane}

For a cell $c \in C(\Gamma)$ of a $2$-colorable $1$-plane drawing~$\Gamma$, we call a closed walk~$w$ of $\partial c$ an \emph{inner cycle} if the component of the conflict graph containing the crossings of~$w$ is a cycle,~see \cref{fig:2-col_1-plane_pattern}.
 \begin{figure}
        \centering
        \includegraphics[page=2]{figures/2-colored_1-plane_pattern.pdf}
        \caption{The coloring of an inner cycle (of size~$8$) in a $2$-colorable $1$-plane drawing.}
        \label{fig:2-col_1-plane_pattern}
\end{figure}
The size of an inner cycle $w$ is the number of incident segments.
In a $2$-colorable $1$-plane drawing,~each edge~$e$ that belongs to an inner cycle~$w$ is crossed by two edges~$e'$ and~$e''$ of different colors.

\begin{observation}
\label{obs:2-plane_inner_cell}
    Let~$\Gamma$ be a connected $2$-colorable $1$-plane drawing (in blue and orange) with an inner cycle~$w$ of size~$s$.
    The segments of~$w$ (in the order on the boundary) are colored with the pattern blue, blue, orange, orange; see \cref{fig:2-col_1-plane_pattern}.
    In particular, $s \equiv 0 \mod 4$.
\end{observation}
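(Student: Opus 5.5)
We argue locally along the walk~$w$. Every edge~$e$ with a segment on~$w$ is crossed twice in~$\Gamma$, and since the drawing is $1$-plane for each of the two colors, these two crossings are by edges of different colors, one blue and one orange. Because the component of the conflict graph containing the crossings of~$w$ is a cycle, each edge met along~$w$ has degree exactly two in the conflict graph. So this is precisely the situation just described.

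Traverse~$w$ and list its segments $\sigma_1,\dots,\sigma_s$ in cyclic order, with $\sigma_i$ on edge $e_i$. Consecutive segments $\sigma_i,\sigma_{i+1}$ meet at a crossing of~$w$, so $e_i$ and $e_{i+1}$ cross. Hence the sequence $e_1,\dots,e_s$ traces a closed walk in the conflict graph. Each $e_i$ has exactly two crossings, and along the boundary of the cell, $\sigma_i$ is the piece of $e_i$ between its two crossings, with $e_{i-1}$ at one end and $e_{i+1}$ at the other. So $e_{i-1}$ and $e_{i+1}$ are precisely the two edges crossing~$e_i$, and therefore they have different colors. In other words, $\varphi(e_{i-1})\neq\varphi(e_{i+1})$ for every $i$, with indices taken mod~$s$.

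With only two colors, this forces $\varphi(e_{i+1})$ to be the opposite color of $\varphi(e_{i-1})$. So the colors at even positions alternate, and the colors at odd positions alternate independently. Combining the two interleaved alternating sequences gives, up to rotation and swapping colors, exactly the repeating pattern blue, blue, orange, orange; the alternative interleaving blue, orange, orange, blue, ... is the same pattern shifted by one.

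For the divisibility claim, note that each interleaved alternating sequence closes up consistently around~$w$ only if its length is even. That already makes $s$ even. Say $s=2r$. The even-indexed colors then form a cyclic sequence of length~$r$ in which every pair of consecutive entries is different. With two colors this is possible only if $r$ is even, so $s\equiv 0 \pmod 4$.

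The main obstacle is justifying that the neighbors of $\sigma_i$ along~$w$ are exactly the two crossing partners of~$e_i$. This includes ruling out degenerate cases where the same edge appears twice on~$w$, or where $e_{i-1}=e_{i+1}$. Here we use the cycle assumption on the conflict graph: the component is a simple cycle of length at least three, since parallel crossings of the same pair would form an empty lens or violate simplicity. So the edges along~$w$ correspond bijectively to the vertices of that cycle, and $s$ equals its length. The case $s$ odd is then excluded by the parity argument above.
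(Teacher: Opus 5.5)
Your argument is correct and follows the paper's reasoning. The paper gives this observation without a formal proof, resting on the preceding remark that each edge of an inner cycle is crossed by two edges of different colors. Your step $\varphi(e_{i-1})\neq\varphi(e_{i+1})$, with the two interleaved alternating sequences, is exactly that remark written out.

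Two small tidy-ups, neither affecting correctness. First, no lens argument is needed to rule out $e_{i-1}=e_{i+1}$. The conflict graph is simple by definition, and a single edge crossing $e_i$ twice would already violate $1$-planarity for its color. Your claimed lens would also not necessarily be empty. Second, for odd $s$ it is cleaner to say that $i\mapsto i+2$ generates one odd cycle of inequality constraints, which cannot be $2$-colored.
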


Büngener and Kaufmann relate the edge density of $2$-plane drawings to the number of drawings of subgraphs of type~$\FVtII$ and~$\FVItII$ \cite{buengener2024improvingCrossingNumberDense2Planar3Planar}.
Such subdrawings cannot be crossed in $2$-plane drawings and thus correspond to configurations of the same type.
While inner cycles of size~$5$ and~$6$ do not appear in $2$-colorable $1$-plane drawings, Büngener and Kaufmann show that such cycles cannot be avoided in dense $2$-plane drawings:

\begin{theorem}[{Büngener and Kaufmann \cite[Corollary~2]{buengener2024improvingCrossingNumberDense2Planar3Planar}}]
\label{thm:oversaturation_2_planar}
    Every non-homotopic $2$-plane drawing of a graph on $n$~vertices with $4.\overline{3}(n-2)+x$ edges for $x \in [0, 0.\overline{6}(n-2)]$ contains at least $x$ configurations of type~$\FVtII$ or~$\FVItII$.
\end{theorem}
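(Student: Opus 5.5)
The plan is a discharging argument on the plane skeleton, modelled on the Pach--Tóth proof of the $5(n-2)$ bound for $2$-plane drawings, refined so that every unit of excess above $4.\overline{3}(n-2)$ is paid for by a $\FVtII$- or $\FVItII$-configuration.

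\textbf{Normalisation.} Let $\Gamma$ be a non-homotopic $2$-plane drawing with $m=4.\overline{3}(n-2)+x$ edges. First I would argue we may assume $\Gamma$ is connected and that its plane skeleton $\Gamma_0$ is as large as possible. Concretely, fix among all $2$-plane redrawings with the same numbers of edges and configurations one maximising the number of plane edges. Then $\Gamma_0$ is connected and every face $f$ of $\Gamma_0$ has a boundary walk of length $|f|\geq 3$. Every crossed edge lies in the interior of exactly one such face.

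\textbf{Counting.} Write $m=m_0+\sum_f e(f)$, where $m_0$ is the number of plane edges and $e(f)$ the number of crossed edges drawn inside face $f$. Euler's formula for $\Gamma_0$ gives
\[
m_0=\sum_f \tfrac{|f|}{2},\qquad \sum_f(|f|-2)=2(n-2).
\]
It therefore suffices to prove a local inequality for each face $f$:
\[
\tfrac{|f|}{2}+e(f)\;\le\;\tfrac{13}{6}\,(|f|-2)+x_f,
\]
where $x_f$ is the number of $\FVtII$- or $\FVItII$-configurations inside $f$. Summing over all faces then gives $m\le 4.\overline{3}(n-2)+\sum_f x_f$, which is the claimed statement. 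The hypothesis $x\le 0.\overline{6}(n-2)$ only ensures the range is consistent with the $5(n-2)$ upper bound and needs no separate treatment.

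\textbf{Local inequality.} For $|f|=3$ the inequality reads $1.5+e(f)\le 2.1\overline{6}+x_f$, so it requires $e(f)=0$ unless $f$ carries a configuration. Indeed, a $2$-plane triangle face cannot contain a crossed edge without contradicting the maximality of $\Gamma_0$ or non-homotopy. For larger faces I would bound $e(f)$ by a discharging scheme inside $f$:
\begin{itemize}
\item Give each cell $c$ of the planarization inside $f$ the charge $\|c\|-4$, where $\|c\|$ counts segments plus real vertices on $\partial c$.
\item Because each edge has at most two crossings, the total charge relates $e(f)$ to $|f|$.
\item Only cells of size $5$ or $6$ whose crossings form a $\FVtII$ or $\FVItII$ pattern have too little charge to pay their share.
\item All other cells can be shown, by a small case analysis on the number of crossings per incident edge, to redistribute charge so that each crossed edge inside $f$ costs $\tfrac{13}{6}-\tfrac12$ units of $(|f|-2)$ on average.
\end{itemize}
Each deficient cell then accounts for a deficit of at most one edge. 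Since $\FVtII$ and $\FVItII$ subdrawings are uncrossed in $2$-plane drawings, they are genuine configurations, so the deficiency is charged to a distinct configuration and contributes at most $1$ to $x_f$.

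\textbf{Main obstacle.} The hardest step is the case analysis inside large skeleton faces. There one has to show that apart from pentagonal and hexagonal inner cells, any crowding of crossed edges forces a large cell that can absorb the excess charge. I would first attempt this by classifying the crossed edges in $f$ by their number of crossings ($1$ or $2$) and by treating chains of crossing edges as in the Pach--Tóth argument.
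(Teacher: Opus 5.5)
The paper does not prove this statement; it quotes it from Büngener and Kaufmann. Your sketch therefore has to stand on its own, and its central step fails as described.

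\textbf{The proposed charge cannot bound $e(f)$.} Your reduction to the per-face inequality $e(f)\le\frac53|f|-\frac{13}{3}+x_f$ is correct bookkeeping, but that inequality \emph{is} the theorem. Take a face $f$ with boundary walk of length $s$, containing $e$ crossed edges and $X$ crossings. Euler's formula for the planarization inside $f$ gives $e+X+1$ cells and $\sum_{c\subseteq f}\|c\|=4e+4X+2s$. (The $e+2X$ inner segments are counted twice, each boundary side once, and there are $2e+s$ vertex--cell incidences.) Hence $\sum_{c\subseteq f}(\|c\|-4)=2s-4$ for \emph{every} face, whatever it contains. The quantity $e(f)$ cancels, so no redistribution of this charge can bound $e(f)$, and your claim that ``the total charge relates $e(f)$ to $|f|$'' is false. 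Your diagnosis of the deficient cells is also off: in a $\FVtII$-face the central pentagon has charge $+1$, the five corner cells have $0$, and the five cells along the boundary edges have $+1$.

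A charge that keeps $e(f)$ is the density-formula one: $\frac{t-1}{4}\|c\|-t$ per cell with $t=\frac{13}{3}$, plus $1$ per crossing. By the same count its total over $f$ is $\frac53 s-\frac{13}{3}-e(f)$, which equals $-1$ for a $\FVtII$-face and $-\frac13$ for a $\FVItII$-face. The real work is an explicit local argument showing this total is nonnegative for every other face. That includes long chains and cycles of crossed edges in large faces, non-simple boundary walks, and parallel edges. This is exactly the part you leave as ``a small case analysis''.

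\textbf{The normalization is unjustified.} Maximizing the number of plane edges among drawings with the same $m$ and the same number of configurations gives no reason for $\Gamma_0$ to be connected or for its faces to be saturated. Those properties come from adding plane edges, as the paper does with filled drawings for the crossing bound. Adding plane edges changes $m$ and can create configurations, so transferring the result back to $\Gamma$ needs an argument.

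This matters for your local inequality. Consider an octagonal face $v_0\dots v_7$ carrying the five diagonals of the pentagon $v_0v_1v_2v_3v_4$ and the five diagonals of $v_4v_5v_6v_7v_0$, but no edge $v_0v_4$. It is $2$-plane and has $e(f)=10>9=\frac53\cdot 8-\frac{13}{3}$. So your inequality holds there only if these two pentagrams, whose pentagons are not closed by plane edges, count as $\FVtII$-configurations. Nothing you establish excludes such faces. You must fix the definition of a configuration and either handle these faces directly or give a reduction to filled drawings that copes with the fact that adding the single plane edge $v_0v_4$ can complete two closed configurations at once.
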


Indeed, $\FVtII$- and $\FVItII$-configurations contain an inner cycle of size~$5$ and~$6$ respectively.
That is, such configurations do not appear in any $2$-colorable $1$-plane drawing.

\begin{corollary}
\label{cor:edge_density_2-col_1-plane_upper}
    Every non-homotopic $2$-colorable $1$-plane drawing~$\Gamma$ of a graph on $n$ vertices contains at most $4.\overline{3}(n-2)$ edges.
\end{corollary}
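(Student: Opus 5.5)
We argue by contradiction, combining \cref{obs:kt-plane}, \cref{thm:oversaturation_2_planar} and \cref{obs:2-plane_inner_cell}. Let $\Gamma$ be a non-homotopic $2$-colorable $1$-plane drawing on $n$ vertices, and suppose $\abs{E(\Gamma)} > 4.\overline{3}(n-2)$. By \cref{obs:kt-plane}, $\Gamma$ is $2$-plane.

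Before applying \cref{thm:oversaturation_2_planar} we must make sure $x$ lies in its admissible range. Write $\abs{E(\Gamma)} = 4.\overline{3}(n-2)+x$ with $x>0$. Since $2$-plane drawings have at most $5(n-2)$ edges~\cite{pach1996graphsFewCrossings, pach1997graphsFewCrossings}, we get $x \le 0.\overline{6}(n-2)$. If one prefers to avoid relying on that bound for non-homotopic multigraph drawings, we can instead delete edges one at a time until exactly $\lceil 4.\overline{3}(n-2)\rceil$ edges remain, i.e.\ $x\in(0,1]$. Deleting edges preserves non-homotopy (every lens of the subdrawing is a lens of $\Gamma$ bounded by parts of the same two edges, and still contains the crossing or vertex it did before). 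It also preserves $2$-colorable $1$-planarity, since we simply restrict the coloring. In either case \cref{thm:oversaturation_2_planar} yields at least $x>0$, hence at least one, configuration of type $\FVtII$ or $\FVItII$ in the (possibly reduced) drawing.

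It remains to show that no such configuration occurs in a $2$-colorable $1$-plane drawing. Each of these configurations contains a cell whose boundary walk $w$ has $5$ resp.\ $6$ segments. Every edge meeting $w$ is crossed by the two neighbouring edges along $w$ and, being part of a configuration in a $2$-plane drawing, by no other edges. So the crossings on $w$ form a component of the conflict graph that is a cycle, and $w$ is an inner cycle of size $5$ or $6$. By \cref{obs:2-plane_inner_cell}, however, every inner cycle in a $2$-colorable $1$-plane drawing has size divisible by $4$; this is where connectivity of the drawing is assumed, and we may assume it by working in the component containing $w$. Since $5,6 \not\equiv 0 \pmod 4$, we have a contradiction. (Directly: each edge on $w$ must be crossed by one blue and one orange edge, so the colours along $w$ follow the pattern blue, blue, orange, orange, which forces the length to be a multiple of $4$.)

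The main obstacle is this last step: verifying from the definition of a configuration that the $5$- and $6$-cycles really are inner cycles, i.e.\ that the crossing edges have no further crossings outside the configuration. This holds because those edges already carry two crossings inside the configuration and $\Gamma$ is $2$-plane.
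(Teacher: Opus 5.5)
Your argument is correct and is essentially the paper's: $\Gamma$ is $2$-plane by \cref{obs:kt-plane}, so \cref{thm:oversaturation_2_planar} would force a $\FVtII$- or $\FVItII$-configuration, and the inner cycles of size $5$ and $6$ in these configurations are ruled out by \cref{obs:2-plane_inner_cell}. Your extra check of the admissible range of $x$ is something the paper leaves implicit.

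There is one slip in your optional edge-deletion variant. Keeping $\lceil 4.\overline{3}(n-2)\rceil$ edges gives $x=0$ whenever $3 \mid n-2$, so you should keep $\lfloor 4.\overline{3}(n-2)\rfloor+1$ edges instead. Also, non-homotopy survives the deletion for a different reason than the one you give. A lens's crossing may involve a deleted edge, so it need not survive. What saves you is that in a $2$-plane drawing a lens with no vertex in its interior cannot contain a crossing at all, so every lens contains a vertex.
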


A small modification of the known densest $2$-plane drawings~\cite{pach1997graphsFewCrossings} yields a matching lower bound on the edge density of $2$-colorable $1$-plane drawings (see \cref{fig:edge_density_2-col_1-plan_lower}).

\begin{figure}
    \savebox{\imagebox}{\includegraphics[page=2]{figures/tightness_edge_density_2-col_1-plan.pdf}}
    \centering
    \subcaptionbox{\label{fig:edge_density_2-col_1-plan_lower-1}}{\usebox{\imagebox}}
    \hfil
    \subcaptionbox{\label{fig:edge_density_2-col_1-plan_lower-2}}{\raisebox{\dimexpr.5\ht\imagebox-.5\height}{\includegraphics[page=3]{figures/tightness_edge_density_2-col_1-plan.pdf}}}
     \caption{A $2$-colorable $1$-plane drawing based on~\cite[Figure~3]{pach1997graphsFewCrossings}.
    (a) A plane drawing with pentagonal faces. (b) To each pentagonal face all but one diagonal (dashed) are added.}
    \label{fig:edge_density_2-col_1-plan_lower}
\end{figure}
\begin{proposition}
\label{prop:edge_density_2-col_1-plan_lower}
    For every~$n = 20+15\ell$ with $\ell \in \mathbb{N}^+$, there exists a $2$-colorable $1$-plane drawing of a simple graph on~$n$ vertices with $4.\overline{3}(n-2)$~edges and $1.\overline{3}(n-2)$~crossings.
\end{proposition}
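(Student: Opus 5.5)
We take the densest $2$-plane construction of Pach and Tóth \cite[Figure~3]{pach1997graphsFewCrossings} as the skeleton. Fix a plane, simple, $3$-connected graph $P$ all of whose faces are pentagons. Concretely, we use a family of fullerene-like, dodecahedral-type nested pentagonal layers: a dodecahedron has $20$ vertices, and each additional ring of pentagons adds $15$ vertices, which explains $n = 20 + 15\ell$. Let $n$, $m_P$ and $f$ be the numbers of vertices, edges and faces of $P$. Since every face has five sides, $2m_P = 5f$, and Euler's formula gives $n - \tfrac{5}{2}f + f = 2$. Hence $f = \tfrac{2}{3}(n-2)$ and $m_P = \tfrac{5}{3}(n-2)$. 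We first verify that the layered construction exists for every $\ell \ge 1$, with all faces pentagons, the graph simple, and no two faces sharing more than one edge.

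Inside each pentagonal face $v_1v_2v_3v_4v_5$ we add four of its five diagonals, omitting, say, $v_1v_4$. The four diagonals $v_1v_3, v_2v_4, v_3v_5, v_5v_2$ are drawn inside the face as in \cref{fig:edge_density_2-col_1-plan_lower-2}, which gives a path of crossings. In the full pentagram, $v_1v_3$ crosses $v_2v_4$ and $v_2v_5$; $v_2v_4$ crosses $v_1v_3$ and $v_3v_5$; $v_3v_5$ crosses $v_2v_4$ and $v_1v_4$; $v_2v_5$ crosses $v_1v_3$ and $v_1v_4$. Removing $v_1v_4$ leaves the crossing pairs $(v_1v_3, v_2v_4)$, $(v_1v_3, v_2v_5)$ and $(v_2v_4, v_3v_5)$. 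These are $3$ crossings per face, so the total is $3f = 2(n-2) \cdot \tfrac{2}{3} = 1.\overline{3}(n-2)$ crossings. The number of edges is $m_P + 4f = \tfrac{5}{3}(n-2) + \tfrac{8}{3}(n-2) = \tfrac{13}{3}(n-2) = 4.\overline{3}(n-2)$.

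For the coloring, the conflict graph restricted to one face is the path $v_2v_5 - v_1v_3 - v_2v_4 - v_3v_5$, and different faces do not interact because the pentagon edges are uncrossed. We color the path blue, blue, orange, orange. Then $v_1v_3$ is crossed by one blue and one orange edge, and so is $v_2v_4$. The endpoints $v_2v_5$ and $v_3v_5$ have a single crossing each, and all pentagon edges are uncrossed. Every edge therefore sees at most one crossing of each color, so the drawing is $2$-colorable $1$-plane. Coloring all skeleton edges arbitrarily completes the construction.

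Finally, simplicity must be checked. A diagonal $v_iv_j$ must not coincide with an edge of $P$, and two faces must not contribute the same diagonal pair. This holds when $P$ has no separating triangles or quadrilaterals and no two vertices at distance $2$ share two faces, which is the case for the layered dodecahedral family. The main obstacle is describing that infinite family explicitly, namely proving that adding a ring of $15$ vertices preserves pentagonality and this girth condition. One way to handle it is to glue dodecahedron-like caps to a cylindrical pentagonal strip; we have not carried this out here. Alternatively, one can simply cite the family from \cite{pach1997graphsFewCrossings}, which already guarantees simplicity.
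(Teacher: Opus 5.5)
Your construction and coloring match the paper's: a plane skeleton with only pentagonal faces, $f=\frac{2}{3}(n-2)$ faces by Euler's formula, four of the five diagonals in each face, and the blue, blue, orange, orange pattern on the path formed by $v_2v_5, v_1v_3, v_2v_4, v_3v_5$. The edge count $\frac{13}{3}(n-2)$ and the $2$-colorable $1$-plane property are correctly verified.

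The crossing count, however, contains an arithmetic error. Three crossings per face give $3f = 3\cdot\frac{2}{3}(n-2) = 2(n-2)$. The expression $2(n-2)\cdot\frac{2}{3}$ you wrote equals $2f$, not $3f$. This slip hides a real discrepancy. Your count of three crossings per face is correct, and redrawing cannot improve it. The diagonals of a face lie inside that face, a disk bounded by a $5$-cycle, and two chords of a disk with alternating endpoints must cross. So the four diagonals force at least three crossings per face, and exactly three in the straight-line picture. The paper's proof reaches $1.\overline{3}(n-2)$ only by asserting that each face contains two crossings. Your own pentagram count contradicts this: there are five crossings, and two of them lie on the omitted diagonal. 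So this construction has $4.\overline{3}(n-2)$ edges and $2(n-2)$ crossings, not $1.\overline{3}(n-2)$ crossings. You should have flagged the mismatch with the stated crossing number instead of adjusting the arithmetic to fit it.

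Separately, you leave the existence of the skeletons for $n=20+15\ell$ open. The paper handles this by repeatedly drawing a copy of the dodecahedron inside a face of the current drawing and identifying its outer $5$-cycle with that face's boundary. Each step adds $15$ vertices and $10$ faces and keeps every face a $5$-cycle. The gluing also preserves $3$-connectivity, which settles simplicity. In a $3$-connected plane graph, two vertices at distance two on a face are non-adjacent and share no other face. Hence no diagonal duplicates a skeleton edge or a diagonal of another face. Your description via `fullerene-like rings' is not needed and not accurate: by Euler's formula, a plane graph whose faces are all pentagons is cubic only for $n=20$.
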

\begin{proof}
    Consider a plane drawing~$\Gamma_0$ on $n$~vertices where each face is bounded by a $5$-cycle.
    Adding all but one diagonal to each of the faces yields a $2$-colorable $1$-plane drawing~$\Gamma$, see \cref{fig:edge_density_2-col_1-plan_lower}.
    As $\Gamma_0$ is connected, we obtain with Euler's formula that~$\Gamma_0$ contains $f = \frac{2}{3}(n-2)$~faces and $m = \frac{5}{3}(n-2)$ edges.
    The bound on the number of edges and crossings of~$\Gamma$ now follows as each face of~$\Gamma_0$ contains two crossings and four edges of~$E(\Gamma) - E(\Gamma_0)$.

    Such drawings~$\Gamma_0$ (and thus also~$\Gamma$) exist for every $n = 20 + 15\ell$ with $\ell \in \N_0$.
    For $\ell=0$, consider the drawing in \cref{fig:edge_density_2-col_1-plan_lower-1}.
    For larger values of~$\ell$, identify the outer face of the drawing for~$\ell=0$ with a face of a drawing for~$\ell-1$.
\end{proof}

\subsection{\texorpdfstring{$\bm{3}$}{3}-colorable \texorpdfstring{$\bm{1}$}{1}-plane drawings}
\label{sec:edge_density_3-col_1-plane}
While $2$-colorable $1$-plane drawings contain no inner cycles of size~$5$ and~$6$, $3$-colorable $1$-plane drawings avoid $\FVItIII$-configurations:
\begin{restatable}{lemma}{NoFSix}
\label{lem:no_f6t3_in_3-col_1_plane}
A $3$-colorable $1$-plane drawing contains no $\FVItIII$-configuration.
\end{restatable}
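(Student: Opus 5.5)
The plan is to reduce \cref{lem:no_f6t3_in_3-col_1_plane} to a statement about the conflict graph, in the spirit of \cref{obs:correspondence_frugal_conflict_graph}, and then derive a contradiction by a short parity or pigeonhole argument on the edges of the configuration. Suppose, for contradiction, that a $3$-colorable $1$-plane drawing~$\Gamma$ contains a $\FVItIII$-configuration, and fix a $1$-plane $3$-edge-coloring~$\varphi$.

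\textbf{Key observation.} Since $\Gamma$ is $3$-colorable $1$-plane, it is in particular $3$-plane by \cref{obs:kt-plane}. Hence every edge crossed exactly three times has its three crossing partners colored with three pairwise \emph{distinct} colors. In conflict-graph language, every vertex~$v$ of~$H$ with $\deg(v)=3$ has a rainbow neighborhood. Equivalently, $\varphi$ restricted to $N(v)$ is injective, so any two edges that both cross a common saturated edge receive different colors.

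\textbf{Main step.} Let $S$ be the edges of the $\FVItIII$-configuration. I assume, as I read the pictogram, that it consists of a central cell bounded by six edge segments, with every edge of the configuration crossed exactly three times, all by edges inside the configuration. Since configurations are labeled embedded subgraphs of $\Lambda(\Gamma)$, these crossings are exactly the ones drawn, so no outside edge crosses an edge of~$S$. Let $H_S$ be the conflict graph induced by~$S$, which is $3$-regular. Form the auxiliary graph $A$ on~$S$ in which two edges are adjacent whenever they cross a common edge, i.e.\ the ``square minus the edges'' of~$H_S$. By the key observation, $\varphi$ must be a proper $3$-coloring of~$A$.

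It then remains to show $\chi(A)>3$. I would first try to locate a $K_4$ in~$A$: an edge~$e$ together with its three neighbors, where those neighbors cross a second common edge, or an odd wheel. If that fails, I would fall back on a counting argument. Because $H_S$ is $3$-regular and every neighborhood is rainbow, each vertex sees every color other than possibly one exactly once, which gives a perfect-code-like condition. For each color~$i$, the closed neighborhoods of the class $\varphi^{-1}(i)$ must cover every vertex at most once, and every vertex not of color~$i$ exactly once. Double counting then gives $3\abs{\varphi^{-1}(i)} = \abs{S}-\abs{\varphi^{-1}(i)}$ or a similar divisibility constraint. I expect this to fail for the specific size of~$S$ (for instance, $\abs{S}$ not divisible by~$4$), in analogy with the mod-$4$ constraint in \cref{obs:2-plane_inner_cell}.

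\textbf{Main obstacle.} The hardest step is pinning down the exact crossing structure of~$H_S$ from the configuration type and checking that no outside edge can cross the configuration's edges. Once $H_S$ is explicit, the argument is a finite check, which I would present as either the $K_4$ in~$A$ or the divisibility contradiction.
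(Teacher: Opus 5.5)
\textbf{There is a genuine gap: the proposal never proves $\chi(A)>3$, which is the whole content of the lemma.} Your reduction is sound. A $1$-plane $3$-edge-coloring is exactly a coloring that is injective on every conflict-graph neighborhood, for all edges and not only saturated ones. Hence it is a proper $3$-coloring of your graph $A$, and extra outside crossings only add constraints. The failure is in the step that should establish $\chi(A)>3$.

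\emph{The structural assumption is false.} The configuration is a hexagon $v_0,\dots,v_5$ containing all six short diagonals $v_iv_{i+2}$ but only two of the three long diagonals, say $v_0v_3$ and $v_1v_4$. The short diagonals $v_1v_3$ and $v_4v_0$ would be crossed by the missing $v_2v_5$, so they are crossed only twice. Thus $H_S$ has $8$ vertices and $11$ edges and is not $3$-regular.

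\emph{Neither of your tools works for the actual graph.} Computing $A$ gives a $4$-regular graph on $8$ vertices in which every neighborhood induces a path on four vertices. So $A$ contains no $K_4$ and no wheel at all, even though $\chi(A)=4$. Your counting argument depends on $3$-regularity. As phrased with closed neighborhoods, it also implicitly treats $\varphi$ as proper, which a frugal coloring need not be. With the true degree sequence, double counting only says that color class $i$ has degree sum $6+\varepsilon_i$ with $\sum_i\varepsilon_i=4$. This is numerically satisfiable, for example with class degree sums $8,8,6$, so no contradiction follows.

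\textbf{The missing idea is an explicit forcing argument, which is how the paper proceeds.}
\begin{itemize}
\item For each $j$, the short diagonal $v_{j-1}v_{j+1}$ crosses every diagonal at $v_j$. Hence diagonals sharing an endpoint must receive distinct colors.
\item Therefore the triangle $v_0v_2v_4$ is rainbow; say $v_0v_2,v_2v_4,v_4v_0$ get colors $1,2,3$.
\item This forces $v_0v_3$ to have color $2$ (looking at $v_0$) and $v_1v_4$ to have color $1$ (looking at $v_4$).
\item Now $v_0v_3$ is already crossed by $v_1v_4$ (color $1$) and $v_2v_4$ (color $2$), so its third partner $v_1v_5$ has color $3$.
\item Symmetrically, $v_1v_4$ is crossed by $v_0v_2$ (color $1$) and $v_0v_3$ (color $2$), so $v_3v_5$ has color $3$.
\item But $v_1v_5$ and $v_3v_5$ both end at $v_5$, so both cross $v_4v_0$. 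That edge is then crossed twice by color $3$, a contradiction.
\end{itemize}
In your language, this is a direct proof that $A$ is not $3$-colorable. That is exactly the step your proposal left to a $K_4$ search and a divisibility count, neither of which can succeed here.
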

\begin{proof}
Suppose there exists a $1$-plane $3$-edge-coloring~$\varphi\colon E(C) \to \set{1,2,3}$ of the $\FVItIII$-{}con\-fi\-gu\-ra\-tion $C$.
Let~$v_0, \dots, v_5$ be the ordering of the vertices along the outer face of~$C$.
We distinguish two types of edges in~$C$:
\begin{itemize}
    \item a \emph{short diagonal} connects two vertices~$v_i,v_j$ with $i-j \equiv 2 \mod 6$ or $j-i \equiv 2 \mod 6$,
    \item a \emph{long diagonal} connects two vertices~$v_i,v_j$ with $i-j \equiv 3 \mod 6$.
\end{itemize}
The short diagonals form two independent triangles~$T_1,T_2$ that intersect such that each edge is crossed twice.
\begin{figure}[tbp]
    \centering
    \subcaptionbox{\label{fig:f6_3_not_3-col_1-plane-1}}{\includegraphics[page=1]{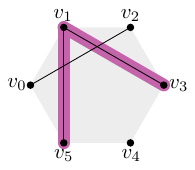}}
    \hfil
    \subcaptionbox{\label{fig:f6_3_not_3-col_1-plane-2}}{\includegraphics[page=2]{figures/f6_3_not_3-col_1-plane.pdf}}
    \hfil
    \subcaptionbox{\label{fig:f6_3_not_3-col_1-plane-3}}{\includegraphics[page=3]{figures/f6_3_not_3-col_1-plane.pdf}}
    \caption{(a)-(b) If two adjacent edges~$e,e'$ (\textcolor{purple}{$\blacksquare$}) in~$\FVItIII$ have the same color, the $3$-edge-coloring is not $1$-plane as a short diagonal is crossed by~$e$ and~$e'$. (c) The only $1$-plane $3$-edge-coloring of~$T_1 \cup \set{d_1,d_2}$.}
    \label{fig:f6_3_not_3-col_1-plane}
\end{figure}
We first observe that no two adjacent edges~$e=v_iv_j, e'=v_jv_k$ in~$C$ have the same color.
Indeed, otherwise the short diagonal~$v_{j-1}v_{j+2}$ is crossed by~$e$ and~$e'$ certifying that the $3$-edge-coloring~$\varphi$ is not $1$-plane, see \cref{fig:f6_3_not_3-col_1-plane-1} and \ref{fig:f6_3_not_3-col_1-plane-2}.

That is, at each vertex at most one edge of each color is incident.
Restricting~$C$ to the two long diagonals~$d_1,d_2$ and the short diagonals of the triangle~$T_1$, we are in the situation depicted in \cref{fig:f6_3_not_3-col_1-plane-3}.
Each of the two long diagonals is crossed by a blue and an orange edge.
Thus, two of the remaining short diagonals (all of which belong to the triangle~$T_2$) are red.
In particular, we have two adjacent red edges, contradicting the observation above.
\end{proof}
Yet, every dense $3$-plane drawing contains a $\FVItIII$-configuration:
\begin{theorem}[{Büngener and Kaufmann \cite[Corollary~4]{buengener2024improvingCrossingNumberDense2Planar3Planar}}]
\label{thm:oversaturation_3_planar}
    Every non-homotopic $3$-plane drawing of a graph on $n$~vertices with $5(n-2)+x$ edges for $x \in [0, 0.5(n-2)]$ contains at least $x$ configurations of type~$\FVItIII$.
\end{theorem}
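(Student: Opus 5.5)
We prove the statement with a discharging argument on the planarization~$\Lambda(\Gamma)$, in the style of Ackerman's proof for $4$-plane drawings and the Büngener--Kaufmann refinement. Let~$\Gamma$ be a non-homotopic $3$-plane drawing with $m$ edges and let $\#$ denote its number of $\FVItIII$-configurations. The claim then reads $m \le 5(n-2) + \#$ whenever $m \ge 5(n-2)$. The upper end $x \le 0.5(n-2)$ of the range only reflects the known maximum $5.5(n-2)$ edges of $3$-plane drawings, so it needs no separate treatment.

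\textbf{Setup.} First reduce to the case where~$\Gamma$ is connected and every face of the plane skeleton is "saturated". Adding plane edges inside a face that is not yet a lens only increases~$m$, and it does not create new $\FVItIII$-configurations as long as we avoid closing a hexagon around the six-vertex pattern; we will check the latter along the way. For a cell $c \in C(\Gamma)$, let $\abs{c}$ be the number of its boundary segments and $v(c)$ the number of vertices of~$\Gamma$ on~$\partial c$. Assign the initial charge $\mathrm{ch}(c) = \abs{c} + v(c) - 4$. Euler's formula on~$\Lambda(\Gamma)$, where crossings are vertices of degree~$4$, gives $\sum_c \mathrm{ch}(c) = 4n-8$. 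Moreover, $\sum_c v(c) = \sum_v \deg(v) = 2m$. Hence it suffices to redistribute charge to a function~$\mathrm{ch}'$ with the same total such that
\[ \sum_{c} \mathrm{ch}'(c) \;\ge\; \tfrac{2}{5}\sum_c v(c) \;-\; \tfrac45\,\# . \]
This yields $4n-8 \ge \tfrac45 m - \tfrac45\#$, which is exactly $m \le 5(n-2)+\#$.

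\textbf{Discharging.} Non-homotopy ensures that no cell is a $2$-gon without vertices. Hence $\mathrm{ch}(c) \ge 0$ except for $0$-vertex triangles, which have charge $-1$. Cells with at least one vertex have surplus: for instance, a $1$-triangle has charge $0 \ge 2/5$ minus a deficit, and large cells have plenty. The rules will be as follows.
\begin{enumerate}[(R1)]
\item Every $0$-triangle receives $1/5$ across each of its three sides from the neighbouring cell. Because each edge has at most $3$ crossings, one can walk along the edge to a cell with a vertex.
\item Cells incident to vertices pay $2/5$ per incident vertex from their own charge. Any surplus is passed, along edges with at most three crossings, to deficient cells.
\end{enumerate}
The $3$-planarity bound controls the length of the chains of small cells between a $0$-triangle and a vertex-bearing cell. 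So the case analysis is finite: it runs over cells of size at most~$5$ with $v(c) \le 2$ and over the patterns in which they can be glued.

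\textbf{Deficient configurations.} The heart of the proof, and the step I expect to be the main obstacle, is showing that the only way the rules fail is a $\FVItIII$-configuration, and that each such configuration has total deficit at most $4/5$. In a $\FVItIII$-configuration, the hexagon with its long and short diagonals uses every edge's full budget of three crossings. Its many $0$-triangles and $0$-quadrilaterals are therefore enclosed, and no outside cell can feed them; one computes directly that the configuration falls short by exactly $4/5$. For every other tight pattern, I would argue as follows. Take a cell~$c$ ending with negative balance. Let $e$ be a side of~$c$; its $\le 3$ crossings force the neighbouring cells to be small. Close the local structure around~$c$ step by step, using non-homotopy and the prohibition of crossings between adjacent edges. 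Either the structure is the $\FVItIII$ pattern, or some nearby cell has spare charge that can be routed to~$c$.

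Making this exhaustive is the laborious part. I would organise it by the number of crossings on the edges bounding~$c$ ($3$ vs.\ fewer), following the case structure of the known $5.5(n-2)$ proof for $3$-plane graphs.
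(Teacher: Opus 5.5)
The paper does not prove this statement. It quotes it from Büngener and Kaufmann and uses it as a black box, so your proposal has to stand on its own. As written it is a plan, not a proof.

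\textbf{What works.} Your global bookkeeping is sound. For a connected drawing, $\sum_c(|c|+v(c)-4)=4n-8$. Ending with at least $\tfrac25 v(c)$ per cell yields $m\le 5(n-2)$. The hexagon with six short and two long diagonals has total charge $8$ against a target of $\tfrac25\cdot 22$, a shortfall of exactly $\tfrac45$.

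\textbf{The main gap.} The whole content of the theorem is the claim that every other local structure can be balanced. You explicitly defer this, and the rules you do give are wrong. A vertex-free triangle has charge $-1$, so (R1), which gives it $3\cdot\tfrac15$, leaves it at $-\tfrac25$. More importantly, a triangle with one vertex has charge $0<\tfrac25$. It is deficient, contrary to your claim that cells with a vertex have surplus, and neither rule feeds it.

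These one-vertex triangles are exactly where the tight configuration loses charge. The twenty cells of a $\FVItIII$-configuration are:
\begin{itemize}
\item six triangles with two vertices (surplus $\tfrac15$ each);
\item ten triangles with one vertex (deficit $\tfrac25$ each);
\item two vertex-free quadrilaterals and two vertex-free pentagons (charge $0$ and $1$).
\end{itemize}
There is no vertex-free triangle at all, and the shortfall is $4-\tfrac65-2=\tfrac45$. So your explanation of why the configuration is deficient does not match its structure. Any working rule set must be built around the one-vertex triangles.

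\textbf{The saturation step.} Your reduction presupposes that closing the hexagon around the eight diagonals creates a configuration, i.e.\ that the six hexagon edges belong to a $\FVItIII$-configuration. Under that reading, the inequality $m\le 5(n-2)+\#$ you aim for is false.
\begin{itemize}
\item Draw a $6$-cycle and put the six short and two long diagonals into both of its faces. This gives $22=5.5\cdot 4$ edges.
\item Delete one cycle edge. You get a non-homotopic $3$-plane drawing with $21=5(n-2)+1$ edges and no $\FVItIII$-configuration containing its six hexagon edges.
\item The merged face has circumference $10$ and sixteen crossed edges. Its shortfall is exactly $\tfrac45$.
\item Deleting a plane edge shared by two such hexagons in any larger optimal drawing gives $m=5(n-2)+\#+1$ in the same way.
\end{itemize}
Re-inserting the edge creates two configurations at the price of one edge, which is why your augmentation cannot be transferred back. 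Avoiding it leaves a deficient face that is not a closed configuration. That contradicts your planned claim that the rules fail only at $\FVItIII$-configurations.

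So for the statement to hold, a configuration must not be required to contain the hexagon edges. With that reading, your ``enclosed, no outside cell can feed it'' argument and the accounting for the cells along the hexagon have to be redone. In either case, the exhaustive local analysis that constitutes Büngener and Kaufmann's result is missing from your proposal.
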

As no non-homotopic $3$-colorable $1$-plane drawing contains a $\FVItIII$-configuration by \cref{lem:no_f6t3_in_3-col_1_plane},
\cref{thm:oversaturation_3_planar} yields an upper bound on the number of edges.
\begin{corollary}
\label{cor:edge_density_3-col_1-plane_upper}
    Every non-homotopic $3$-colorable $1$-plane drawing~$\Gamma$ of a graph on~$n$ vertices contains at most~$5(n-2)$ edges.
\end{corollary}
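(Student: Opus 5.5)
Since \cref{obs:kt-plane} makes every $3$-colorable $1$-plane drawing~$\Gamma$ a $3$-plane drawing, I will argue by contradiction using \cref{thm:oversaturation_3_planar} together with \cref{lem:no_f6t3_in_3-col_1_plane}. Suppose $\Gamma$ is non-homotopic, $3$-colorable $1$-plane, has $n$ vertices, and has more than $5(n-2)$ edges. Write $\abs{E(\Gamma)} = 5(n-2)+x$ with $x>0$.

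If $x \le 0.5(n-2)$, then \cref{thm:oversaturation_3_planar} applies directly. It gives at least $x>0$ configurations of type~$\FVItIII$ in~$\Gamma$, which contradicts \cref{lem:no_f6t3_in_3-col_1_plane}.

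If instead $x > 0.5(n-2)$, the hypothesis of \cref{thm:oversaturation_3_planar} fails, so I first reduce the edge count.
\begin{itemize}
\item I delete edges from~$\Gamma$ one at a time until exactly $\lceil 5(n-2)\rceil+\epsilon$ edges remain, with $\epsilon$ small and positive (for instance exactly one edge above the bound, taking integrality into account).
\item Deleting an edge keeps the drawing $3$-colorable $1$-plane: I simply restrict the coloring, and no edge gains a crossing.
\item Deleting an edge can, however, destroy non-homotopy, because a lens whose interior contained only a crossing with the deleted edge may become empty.
\end{itemize}

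To handle this, I expect to use the fact that the largest non-homotopic $3$-plane drawing has at most $5.5(n-2)$ edges (Pach et al.\ / Ackerman). This bound is already implicit in the range of \cref{thm:oversaturation_3_planar}, so the case $x>0.5(n-2)$ cannot occur at all. If that bound is not assumed, I fall back on the deletion argument. After each deletion I also remove one edge of every newly empty lens; this is a parallel edge, so the vertex count is unchanged. I continue until the count first drops to at most $5(n-2)+0.5(n-2)$ while still exceeding $5(n-2)$. Each step removes only a bounded number of edges, so this overshoot is controlled as long as $n$ is not tiny, and small $n$ can be checked directly.

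The main obstacle is this reduction step: ensuring that non-homotopy and the edge count can be managed at the same time. I would try first to resolve it simply by citing the $5.5(n-2)$ density bound for $3$-plane drawings.
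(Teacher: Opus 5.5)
Your argument is correct and matches the paper's own: by \cref{obs:kt-plane} the drawing is a non-homotopic $3$-plane drawing, and \cref{thm:oversaturation_3_planar} combined with \cref{lem:no_f6t3_in_3-col_1_plane} rules out any excess $x\geq 1$; the paper leaves the range restriction $x\le 0.5(n-2)$ implicit, and your main fix (citing the $5.5(n-2)$ bound for non-homotopic $3$-plane drawings) is exactly what closes it --- for multigraphs the appropriate source is \cite{goetze2025crossing}, since the paper notes that the Pach et al.\ bounds are stated only for simple graphs. The edge-deletion fallback is then unnecessary, and as written it is not justified anyway (deleting an edge can set off a cascade of newly empty lenses whose size you do not bound), so simply drop it.
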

In fact, this upper bound is tight.
To construct such a drawing, consider a plane drawing where every face is bounded by a $6$-cycle.
Such plane drawings with~$f$ faces can be obtained by subdividing twice each edge of a multi-graph on two vertices and $f$~parallel edges.
Within each face, we can add seven edges such that the resulting drawing~$\Gamma$ is $3$-colorable $1$-plane, see \cref{fig:edge_density_3-col_1-plan_lower} for an illustration.
That is, each face of the plane skeleton of~$\Gamma$ (the subdrawing on all plane edges) now contains seven edges and eight crossings.
Double-counting the edge-face-incidences of the plane skeleton yields the following:
\begin{figure}
    \centering
    \subcaptionbox{\label{fig:cylinder}}{\includegraphics[page=3]{figures/tightness_edge_density_3-col_1-plan.pdf}}
    \hfil
    \subcaptionbox{\label{fig:edge_density_3-col_1-plan_lower}}{\includegraphics[page=2]{figures/tightness_edge_density_3-col_1-plan.pdf}}
    \hfil
    \subcaptionbox{\label{fig:edge_density_2-col_2-plan_lower}}{\includegraphics[page=2]{figures/tightness_edge_density_2-col_2-plan.pdf}}
  \caption{Construction of $t$-colorable $k$-plane drawings with maximum edge density (figure based on \cite[Figure~35]{ackerman2019topological}). (a) A cylinder with two layers, each consisting of three hexagonal faces. (b) For $t=3,k=1$, to each face of the cylinder all diagonals but two are added. Missing diagonals are dashed. (c) For $t=2,k=2$, to each face of \cref{fig:cylinder} all diagonals are added.}
    \label{fig:edge_density_lower}
\end{figure}
\begin{proposition}
\label{prop:edge_density_3-col_1-plan_lower}
    For every $n = 6+2\ell$ with~$\ell \in \N_0$, there exists a non-homotopic $3$-colorable $1$-plane drawing of a multi-graph on $n$ vertices with $5(n-2)$~edges and $4(n-2)$~crossings.
\end{proposition}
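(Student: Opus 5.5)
The construction has three steps: build a plane skeleton all of whose faces are hexagons, fill each face with the same local gadget, and then count.

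\emph{Skeleton.} I start from the multigraph on two vertices $a,b$ with $f\ge 2$ parallel edges, drawn as $f$ curves in the plane between $a$ and $b$, and subdivide each edge twice. Every face of this plane drawing $\Gamma_0$ is bounded by two consecutive subdivided curves, so it is a $6$-cycle $a,x_1,x_2,b,y_2,y_1$. For $f\geq 2$ these six vertices are distinct. The drawing has $n=2+2f$ vertices, which gives $f=(n-2)/2$ and $n=6+2\ell$ for $\ell=f-2\in\N_0$. It has $3f=1.5(n-2)$ edges and $f$ faces. The skeleton edges between $a$ and $b$ are not parallel in $\Gamma_0$ itself. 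Parallel edges, and hence the multigraph, appear only once diagonals are added: for instance, a long diagonal $ab$ in every face produces parallel $ab$ edges. I need to check that these are not homotopic. This holds because any two $ab$ edges in different faces enclose skeleton vertices between them, so every lens they bound contains a vertex.

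\emph{Local gadget.} A hexagon has $9$ diagonals, namely $6$ short and $3$ long. I insert all diagonals except two, as in \cref{fig:edge_density_3-col_1-plan_lower}, and exhibit an explicit $3$-edge-coloring of the $7$ inserted edges. The skeleton edges are uncrossed, so their colors are arbitrary. All crossings occur strictly inside a face, so a per-face check suffices.

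By \cref{lem:no_f6t3_in_3-col_1_plane}, the two omitted diagonals must destroy the $\FVItIII$. A natural choice is to omit two short diagonals, one from each triangle $T_1,T_2$, chosen so that the remaining crossing graph can be colored. A convex hexagon with all nine diagonals has $\binom{6}{4}=15$ crossings. Each short diagonal lies in $3$ crossings, so removing two non-crossing short diagonals leaves $15-6=9$ crossings. That is one too many, so I will instead look for two omitted diagonals that share a crossing, since then the count is $15-3-3+1=8$. Alternatively, I would simply take the drawing of the figure and verify the claimed $8$ crossings directly.

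Given the chosen seven diagonals and their crossing graph $H$, which has $8$ edges, every vertex of $H$ must see each color at most once. Since $\Delta(H)\le 3$, it is enough to find a coloring in which the crossing partners of every edge receive distinct colors, i.e.\ a proper coloring of the square of $H$ restricted to the neighborhoods. I would find this by hand on the small gadget. This step is the main obstacle: choosing the right two diagonals and exhibiting a valid coloring.

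\emph{Counting.} Each face contributes $7$ edges and $8$ crossings. The total number of edges is therefore $3f+7f=10f=5(n-2)$, and the number of crossings is $8f=4(n-2)$. Non-homotopy within a face follows from convex position, where no two diagonals bound an empty lens. Across faces, it follows from the separation argument given in the skeleton step.
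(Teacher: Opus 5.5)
Your skeleton and your final count match the paper's construction. However, the step that carries the proposition is left open: a filling of each hexagonal face by seven edges with eight crossings that is $3$-colorable $1$-plane. The reasoning you sketch toward it goes wrong twice.

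\textbf{Omitting two short diagonals cannot work.} The long diagonal $v_iv_{i+3}$ crosses the other two long diagonals and the two short diagonals $v_{i-1}v_{i+1}$ and $v_{i+2}v_{i+4}$. Every short diagonal crosses exactly one long diagonal. So after omitting two short diagonals, some long diagonal still has four crossings. That is impossible, since a $3$-colorable $1$-plane drawing is $3$-plane (\cref{obs:kt-plane}).

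\textbf{The count is wrong.} $15-3-3+1=10$, not $8$. Omitting two diagonals that cross each other removes only $5$ crossings, fewer than omitting two non-crossing ones. To reach $8$ you must remove $7$ crossings. This forces you to omit a long diagonal together with either another long diagonal ($4+4-1$) or a short diagonal sharing an endpoint with it ($4+3$).

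\textbf{Two long diagonals is not colorable.} With all six short diagonals present, each short diagonal $v_{i-1}v_{i+1}$ is crossed by the two short diagonals at $v_i$. Hence both triangles $\{v_0v_2,v_2v_4,v_4v_0\}$ and $\{v_1v_3,v_3v_5,v_5v_1\}$ must be rainbow. Say the remaining long diagonal is $v_0v_3$; it crosses $v_5v_1$ and $v_2v_4$.
\begin{itemize}
\item At $v_2v_4$, which is also crossed by $v_1v_3$ and $v_3v_5$, we are forced to have $\varphi(v_0v_3)=\varphi(v_5v_1)$.
\item At $v_5v_1$, which is also crossed by $v_0v_2$ and $v_0v_4$, we are forced to have $\varphi(v_0v_3)=\varphi(v_2v_4)$.
\end{itemize}
So $v_0v_3$ is crossed twice by the same color.

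\textbf{The gadget that works.} Omit, e.g., $v_2v_5$ and $v_1v_5$, and keep $v_0v_2,v_1v_3,v_2v_4,v_3v_5,v_4v_0,v_0v_3,v_1v_4$. The crossing pairs are exactly
$v_0v_2$--$v_1v_3$, $v_0v_2$--$v_1v_4$, $v_1v_3$--$v_2v_4$, $v_2v_4$--$v_3v_5$, $v_2v_4$--$v_0v_3$, $v_3v_5$--$v_4v_0$, $v_3v_5$--$v_1v_4$, $v_0v_3$--$v_1v_4$, so there are eight. Color $\{v_2v_4,v_3v_5\}$ with color~$1$, $\{v_0v_3,v_1v_4\}$ with color~$2$, and $\{v_0v_2,v_1v_3,v_4v_0\}$ with color~$3$. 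Then every edge is crossed at most once per color. This is the kind of filling the paper exhibits in \cref{fig:edge_density_3-col_1-plan_lower}.

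With this gadget in every face, your counts $10f=5(n-2)$ edges and $8f=4(n-2)$ crossings, together with your non-homotopy discussion, complete the proof as in the paper.
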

In fact, the drawings constructed above are obtained by deleting (specific) edges from $3$-plane non-homotopic drawings of maximum edge density:
Such a drawing~$\Gamma$ on $n$~vertices contains $5.5(n-2)$~edges \cite{pach2004improving,pach2006improving,goetze2025crossing}\footnote{The upper bound in \cite{pach2004improving} and \cite{pach2006improving} is only given for simple graphs.} and admits a non-homotopic $3$-plane drawing where every face of the plane skeleton~$\Gamma_0$ is a $6$-cycle, each containing a $\FVItIII$-configuration \cite[Theorem~17]{bekos2023optimal-2-3-plane}.

\subsection{\texorpdfstring{$\bm{2}$}{2}-colorable \texorpdfstring{$\bm{2}$}{2}-plane drawings}
\label{sec:edge_density_2-col_2-plane}
Recall that every $2$-colorable $2$-plane drawing is in particular $4$-plane (cf. \cref{obs:kt-plane}).
That is, it contains at most $6n-12$ edges \cite[Theorem~4]{ackerman2019topological} \cite{bungener2025simplified}\footnote{Ackerman provides a proof for simple graphs in \cite{ackerman2019topological} which was recently generalized by Büngener to non-homotopic drawings.}.
In fact, the known $4$-plane drawings with maximum edge density \cite[p.\,37]{ackerman2019topological} are also $2$-colorable $2$-plane, see \cref{fig:edge_density_2-col_2-plan_lower}.
Such a drawing is obtained from a plane drawing~$\Gamma_0$ where every face is a $6$-cycle by inserting all nine diagonals, i.e. each face of~$\Gamma_0$ contains nine edges and fifteen crossings.

\begin{theorem}
\label{thm:edge-density_2-col_2-plane}
    Every non-homotopic $2$-colorable $2$-plane drawing of a multigraph graph on~$n \geq 3$ vertices contains at most $6(n-2)$~edges and for every $n = 6 + 2\ell$ with~$\ell \in \N_0$, there exists such a drawing with~$6(n-2)$ edges.
\end{theorem}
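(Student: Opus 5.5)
The upper bound will come from \cref{obs:kt-plane}: a $2$-colorable $2$-plane drawing is $4$-plane. The known bound for non-homotopic $4$-plane drawings (\cite[Theorem~4]{ackerman2019topological}, extended to non-homotopic multigraph drawings in \cite{bungener2025simplified}) then gives at most $6n-12 = 6(n-2)$ edges for $n \geq 3$. Nothing beyond checking that the hypotheses match is needed here. In particular, the drawing must be non-homotopic and may contain parallel edges, which is exactly the setting of \cite{bungener2025simplified}.

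For the lower bound I will use the construction already described, with the same cylinder skeleton as in \cref{prop:edge_density_3-col_1-plan_lower}. Take $\Gamma_0$ obtained by subdividing twice each of the $f$ parallel edges of a two-vertex multigraph. This gives $n = 2 + 2f$ vertices, $3f$ edges and $f$ faces, each bounded by a $6$-cycle; $f \geq 2$ yields $n = 6+2\ell$ (for $f=2$ use the cylinder of \cref{fig:cylinder} if needed). In every face, insert all nine diagonals in the standard convex-hexagon position: six short diagonals forming two triangles, and three long diagonals. Counting gives $3f + 9f = 12f = 6(n-2)$ edges.

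The drawing is non-homotopic. Parallel edges arise only from diagonals in different faces joining the same pair of vertices, and any lens they bound contains a skeleton vertex or a crossing.

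The main step is to exhibit a $2$-plane $2$-coloring inside a single hexagon; the colorings of different faces are independent, and the skeleton edges are uncrossed. In the convex hexagon, each short diagonal crosses two short diagonals of the other triangle and two long diagonals, so four crossings in total. Each long diagonal crosses the other two long diagonals and four short diagonals, so six crossings in total.

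First attempt: color the three long diagonals and one full triangle blue, and the other triangle orange. Then a long diagonal sees two long and two short blue diagonals, which is four blue crossings and too many. So instead I will color the long diagonals blue and split the short diagonals three and three so that each long diagonal meets at most two blue and at most two orange edges among them. A short diagonal then sees its two long crossers in blue, so it must see no blue among its two short crossers.

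Long diagonals already see two blue edges from each other, so all short diagonals crossing a long diagonal would have to be orange. That would force some short diagonal to see too many orange crossings, so coloring all long diagonals blue fails. I will therefore color two long diagonals blue and one orange, and search the small case space for a valid assignment of the six short diagonals. I expect this finite case check (or simply verifying the coloring of \cref{fig:edge_density_2-col_2-plan_lower}) to be the only real obstacle, and it is routine once a valid coloring is written down. For instance, with $v_0,\dots,v_5$, color $v_0v_3$ orange and $v_1v_4,v_2v_5$ blue, then adjust the short diagonals to balance the counts.
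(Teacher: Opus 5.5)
Your overall route matches the paper's. The upper bound comes from \cref{obs:kt-plane} together with the $6n-12$ bound for non-homotopic $4$-plane drawings. The lower bound uses the hexagonal skeleton with all nine diagonals inserted in each face, and your counts ($n=2+2f$, $12f=6(n-2)$ edges) are fine.

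The gap is in the one step that actually needs checking: the $2$-plane $2$-coloring of a hexagon with all nine diagonals. You never exhibit such a coloring, and the crossing structure you base your search on is wrong. In a convex hexagon $v_0,\dots,v_5$:
\begin{itemize}
\item The short diagonal $v_0v_2$ crosses only $v_1v_3$, $v_1v_5$ and $v_1v_4$. That is two short diagonals of the other triangle and \emph{one} long diagonal, so $3$ crossings, not $4$.
\item The long diagonal $v_0v_3$ crosses only $v_1v_4$, $v_2v_5$, $v_1v_5$ and $v_2v_4$, so $4$ crossings, not $6$.
\end{itemize}
This gives $(6\cdot 3+3\cdot 4)/2=15=\binom{6}{4}$ crossings per face, as the paper states. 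With your counts a long diagonal would have six crossings. The drawing would then not even be $4$-plane, and by \cref{obs:kt-plane} it could not be $2$-colorable $2$-plane. So the case search you defer would come up empty, and your argument as written does not establish the lower bound.

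With the correct counts the step is immediate, and the coloring you discarded is in fact valid. Color the three long diagonals blue and all six short diagonals orange. Each long diagonal then sees the two other long diagonals (blue) and two short diagonals (orange). Each short diagonal sees two short diagonals (orange) and one long diagonal (blue). Your claim that this forces some short diagonal to see too many orange crossings is therefore false. Skeleton edges are uncrossed and all crossings lie inside single faces, so coloring every face this way gives a $2$-plane $2$-coloring of the whole drawing. That completes the lower bound.
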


Yet, while every $2$-colorable $2$-plane drawing is $4$-plane, there are $4$-plane drawings that are not $2$-colorable $2$-plane.
In fact, there are also $3$-plane drawings that are not $2$-colorable $2$-plane.
The $3$-regular graph~$H$ represented in \cref{fig:deg_3_no_2_frugal_2-coloring} is the conflict graph of a $3$-plane drawing as $H$ is planar (i.e., in particular a string graph).
As $H$ admits no $2$-frugal $2$-coloring, the corresponding drawing is not $2$-colorable $2$-plane (cf. \cref{obs:frugal_lower_bound}).
\begin{figure}[tbp]
    \centering
    \includegraphics[page=1]{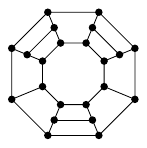}
    \caption{A planar $3$-regular graph with no $2$-frugal $2$-coloring, found by automatically checking examples provided by \href{https://houseofgraphs.org/graphs/53789}{House of Graphs}. The graph is taken from
    \cite[p.\,88]{bickle2023plane}.}
    \label{fig:deg_3_no_2_frugal_2-coloring}
\end{figure}

\section{Crossing density of \texorpdfstring{$\bm{2}$}{2}-colorable \texorpdfstring{$\bm{1}$}{1}-plane drawings}
\label{sec:crossing_density_2-col_1-plane}

The aim of this section is to prove the following:
\begin{restatable}{theorem}{twoColOnePlaneCrossings}
\label{thm:crossings_in_2-col_1-plane}
    Every filled non-homotopic $2$-colorable $1$-plane drawing~$\Gamma$ of a graph on $n$~vertices with $n \geq 5$ has at most $2.\overline{6}(n-2)$~crossings.
\end{restatable}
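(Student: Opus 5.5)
We follow the standard charging scheme for crossing densities of $k$-plane drawings. "Filled" presumably means that no edge can be added without destroying the properties; we use this to assume that the plane skeleton $\Gamma_0$ is connected and that every pair of consecutive endpoints of a crossing pair is joined by a plane edge (a kite edge). This is legitimate in the following sense. Adding a plane edge, or rerouting an edge along a crossing, only increases the number of edges and does not change the crossings. The coloring is preserved, since new plane edges have no crossings.

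\textbf{Step 1 (reduce to the faces of $\Gamma_0$).} Every crossing lies inside a face $f$ of the plane skeleton $\Gamma_0$. Let $|f|$ be the number of edge-sides on the boundary of $f$ and $\ccross(f)$ the number of crossings inside it. Euler's formula for the connected $\Gamma_0$ gives
\[\sum_f (|f|-2) = 2|E(\Gamma_0)| - 2|F(\Gamma_0)| = 2(n-2).\]
So it suffices to show the local inequality
\[\ccross(f) \le \tfrac{4}{3}(|f|-2)\quad\text{for every face } f.\]
Summing this over all faces yields $\tfrac{4}{3}\cdot 2(n-2) = 2.\overline{6}(n-2)$.

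\textbf{Step 2 (local analysis).} Since $\Gamma$ is $2$-plane by \cref{obs:kt-plane}, the conflict graph restricted to $f$ has maximum degree $2$, so its components are paths and cycles. We bound $\ccross(f)$ by the number of edges drawn inside $f$.
\begin{itemize}
\item Every crossed edge inside $f$ is a chord between two boundary vertices of $f$.
\item A path component with $\ell$ edges contributes $\ell-1$ crossings.
\item A cycle component contributes as many crossings as it has edges. By \cref{obs:2-plane_inner_cell}, cycle components have length divisible by $4$; in particular, $5$- and $6$-cycles are excluded.
\end{itemize}
Triangles give $|f|=3$ with at most one crossing (a kite). We then need a counting argument: the chords inside $f$ form a $2$-plane drawing in a disk whose outer boundary has $|f|$ vertices. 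Treating it as an outerplanar-like structure, we bound the number of chords by roughly $\tfrac{4}{3}(|f|-2)$ plus a correction for path ends. The extremal pattern is the pentagon with $4$ chords and $2$ crossings from \cref{prop:edge_density_2-col_1-plan_lower}, which gives $2 = \tfrac{2}{3}(5-2)$. The hexagon-based configurations of ordinary $2$-plane drawings, $\FVItII$ and $\FVtII$, with $5$- and $6$-cycles, would exceed the bound and are exactly the ones excluded by \cref{obs:2-plane_inner_cell}.

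\textbf{Step 3 (main obstacle).} The main difficulty is establishing the local inequality for faces of arbitrary size. These faces may have repeated vertices on the boundary walk, and the cells inside may be bounded by inner cycles of size $8, 12, \dots$. For the large inner cycles, the plan is the following.
\begin{itemize}
\item Each inner cycle of size $s$ uses $s$ edges and $s$ crossings, and those $s$ edges have $2s$ endpoints on $\partial f$.
\item Using non-homotopy and the $2$-plane structure, we would show that each inner cycle forces about $\tfrac{3}{4}s$ boundary vertices of $f$.
\item We then charge its crossings to these boundary vertices, with ratio at most $\tfrac{4}{3}$.
\end{itemize}
Path components are handled by induction: cut $f$ along an uncrossed chord, or along the outermost edge of a path, and apply the inequality to both parts. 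The hypothesis $n\ge5$ is needed to exclude small degenerate drawings, such as $K_4$ with one crossing, where the constant fails. We expect the careful case analysis of the faces containing $8$-cycles to be the hardest part. There, if the direct charging fails, we would first try a discharging argument that moves surplus from neighboring faces.
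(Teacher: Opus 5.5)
\textbf{There is a genuine gap: the per-face inequality is never proved.} Your global reduction is the same as the paper's: bound the number $|X(f)|$ of crossings inside each face $f$ of the plane skeleton by $\frac43(\circc(f)-2)$, then sum via Euler's formula. The paper instead sums $t_i=i-2$ against the $2n-4$ faces of a triangulation of $\Gamma_0$, which also covers a disconnected skeleton. Filledness does not make $\Gamma_0$ connected when $G$ is disconnected, but Euler with $c$ components gives $2n-2-2c\le 2(n-2)$, so that point is harmless. The problem is that the local inequality, which is the entire content of the theorem, is left open. Steps~2 and~3 are a plan (``roughly'', ``about $\frac34 s$'', ``if the direct charging fails \dots''), and several of its concrete claims are wrong:
\begin{itemize}
\item A face with $\circc(f)=3$ contains no crossing at all, since two crossing edges need four distinct endpoints.
\item The face where $\frac43$ is attained is not the pentagon, where you yourself compute ratio $\frac23$. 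It is the octagon with its eight distance-$2$ diagonals, an inner $8$-cycle with $8=\frac43(8-2)$ crossings.
\item Inside a face of $\Gamma_0$ there are, by definition, no uncrossed chords to cut along.
\item $n\ge5$ has nothing to do with $K_4$: one crossing is far below $\frac{16}{3}$. In the paper it only rules out a skeleton of two edges bounding a single face of circumference $4$.
\end{itemize}

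\textbf{The missing idea is a direct injection from crossings into boundary sides.} It uses exactly the two hypotheses you mention but never exploit. Because $\Gamma$ is $2$-plane, at every crossing $x\in X(f)$ each of the two edges has an outer segment. An outer segment of one edge and one of the other are consecutive around $x$, so they bound a cell $c$ carrying their two distinct endpoints $u,v$. Filledness in the paper's sense puts the plane edge $uv$ \emph{on} $\partial c$; a kite edge drawn elsewhere would not do. Then the side of $uv$ facing $c$ is covered by $x$ and by no other crossing, so $|X(f)|\le\circc(f)$.

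If equality holds, every crossing covers exactly one side, since a third outer segment would let it cover a second side. So every crossing has two inner segments. The inner segments then close up into inner cycles, whose sizes are $0 \bmod 4$ by \cref{obs:2-plane_inner_cell}, forcing $\circc(f)\equiv 0\pmod 4$. Add the small cases: $\circc(f)\le 3$ gives no crossings, and $\circc(f)=4$ gives at most two crossings once $n\ge5$. Together these give $|X(f)|\le\ccross(\circc(f))\le\frac43(\circc(f)-2)$, with equality in the last step only at circumference $5$ and $8$. This handles all faces at once; you need no induction over path components, no discharging, and no separate analysis of $8$-cycles.
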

In fact, it is enough to prove \cref{thm:crossings_in_2-col_1-plane} for what we call \emph{filled} drawings, a concept used in previous work, e.g.~\cite{goetze2025crossing}.
A drawing~$\Gamma$ is \emph{filled} if for every cell~$c$ of~$\Gamma$, every pair of (distinct) vertices~$u,v$ on the boundary~$\partial c$ is connected with a plane edge~$uv$ on~$\partial c$.

\begin{observation}
    Every non-homotopic $2$-colorable $1$-plane drawing~$\Gamma$ is contained in a filled non-homotopic $2$-colorable $1$-plane drawing~$\Gamma'$.
\end{observation}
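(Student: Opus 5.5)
We add the missing plane edges one at a time and check that each addition keeps all required properties. Let $\Gamma$ be a non-homotopic $2$-colorable $1$-plane drawing with a $1$-plane $2$-edge-coloring~$\varphi$. Suppose $\Gamma$ is not filled. Then there is a cell~$c$ and two distinct vertices $u,v$ on $\partial c$ such that no plane edge $uv$ lies on $\partial c$. We draw a new edge $e=uv$ as a simple curve through the interior of~$c$, so that it crosses nothing. We give $e$ an arbitrary color, say $\varphi(e)=1$. Since $e$ is uncrossed, no edge receives an additional crossing. Hence the extended coloring is still $1$-plane and the new drawing $\Gamma_1 \supseteq \Gamma$ is still $2$-colorable $1$-plane. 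The remaining drawing conditions (no touching, no crossing of adjacent edges, no loops since $u\neq v$) hold trivially because $e$ meets nothing.

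\textbf{Non-homotopy.} This is the step I expect to need the most care. The only new lenses are bounded by $e$ together with another edge $e'$ joining $u$ and~$v$. The curve $e$ splits $c$ into two cells $c_1,c_2$.
\begin{itemize}
\item If $e'$ is plane, the lens is a union of cells on one side of $e$. It is empty only if $e'$ bounds $c_1$ or $c_2$ directly together with $e$. That would place $e'$ on $\partial c$, which we excluded by the choice of $u,v$.
\item If $e'$ is crossed, pick a crossing $x$ on $e'$ and an edge $f$ crossing $e'$ at $x$. A part of $f$ enters the interior of the lens. That part cannot leave through $e$, because $e$ is plane. So it either ends at a vertex inside the lens, or leaves through $e'$ at another crossing. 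Edge $f$ crosses $e'$ at most once, since multiple crossings of the same pair would create forbidden lens-like configurations; otherwise we can pass to an innermost such piece. In either case the lens contains a vertex or a crossing in its interior.
\end{itemize}
I would make this case analysis precise with a short argument on the planarization $\Lambda(\Gamma)$. The region bounded by $e\cup e'$ is a union of faces of $\Lambda$, and it contains no vertex of $\Lambda$ in its interior only if it is a single face bounded by $e$ and $e'$ alone, which contradicts the choice of $u,v$ as above. So $\Gamma_1$ is non-homotopic.

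\textbf{Termination.} Each step inserts one edge into the planarization without adding vertices; the set of vertices and crossings stays fixed, say of size $N$. After each step the planarization is a connected-or-not plane multigraph without loops and without faces bounded by exactly two edges, since such faces would be empty lenses. By Euler's formula it therefore has at most $3N$ edges, so the process stops after finitely many steps. The final drawing $\Gamma'$ contains $\Gamma$, is non-homotopic and $2$-colorable $1$-plane. By the stopping condition it is filled.
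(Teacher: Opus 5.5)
Your strategy is the one the paper intends. The paper gives no separate proof, only the later remark that every $2$-colorable $1$-plane drawing can be augmented to a filled one by adding plane edges. Since an uncrossed edge never affects the coloring condition, the real content is exactly the non-homotopy and termination checks you carry out, and your termination argument is fine.

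\textbf{The gap.} One step of the non-homotopy argument rests on a false claim. The paper's definition of a drawing does not forbid two edges from crossing more than once; non-homotopy only demands that the lens they bound contain a vertex or a crossing. So you cannot assume that $f$ crosses $e'$ only once. Moreover, if $f$ leaves the side $R$ of $e\cup e'$ through $e'$ at a crossing $y$, then $y$ lies on $\partial R$, not in its interior, so ``in either case'' does not follow as written.

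Your argument never actually uses that $\Gamma$ is non-homotopic, and this is exactly where that hypothesis is needed. Let $y$ be the first point after $x$ where $f$ returns to $\partial R$. The open arc of $f$ between $x$ and $y$ lies in $R$, so it is a cross-cut of $R$. Hence $f[x,y]\cup e'[x,y]$ bounds a region $L\subseteq R$, which avoids $e$ and is therefore a lens of $\Gamma$ (or of the current drawing, inductively). By non-homotopy, $L$, and hence $R$, contains a vertex or crossing in its interior.

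\textbf{Same hole in the planarization version.} A side $R$ of $e\cup e'$ with no vertex of $\Lambda$ in its interior need not be a single face, because it may contain chords of $\Lambda$:
\begin{itemize}
\item Chords joining two crossings on $e'$ are excluded by the lens argument above.
\item Chords from $u$ or $v$ to a crossing on $e'$ are impossible, since adjacent edges do not cross.
\item If there are further plane $uv$-edges inside $R$, the one bounding the cell of $c$ adjacent to $e$ lies on $\partial c$, contradicting the choice of $u,v$.
\end{itemize}
The last point also fixes an imprecision in your plane case, where $e'$ itself need not bound $c_1$ or $c_2$.

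\textbf{A missed case.} The curve $e$ splits $c$ into two cells only if $u$ and $v$ lie on the same boundary walk of $c$. Otherwise they lie in different components of $\Lambda(\Gamma)$, so no edge $e'$ joins them and no new lens arises.

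With these repairs the proof is complete.
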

Yet, even if~$\Gamma$ is a drawing of a simple graph, $\Gamma'$ may contain parallel edges.
In order to prove \cref{thm:crossings_in_2-col_1-plane}, we first provide an upper bound on the number of crossings within faces of the plane skeleton~$\Gamma_0$.
This bound is based on the function $\ccross\colon \N \to \N_0$ with
\begin{align*}
        \ccross(x) =
        \begin{cases}
            0, &\text{if $x \leq 3$,} \\
            2, &\text{if $x = 4$,} \\
            x-1, &\text{if $x \geq 5$ and~$x \nequiv 0 \mod 4$}, \\
            x, &\text{if $x \geq 5$ and~$x \equiv 0 \mod 4$},
        \end{cases}
\end{align*}
for all~$x$. We call $x$ the \emph{circumference} of faces.
Each edge of~$\Gamma_0$ has two \emph{sides}, both of which may be incident to the same face~$f \in C(\Gamma_0)$.
The \emph{circumference}~$\circc(f)$ of a face~$f \in C(\Gamma_0)$ is the number of sides incident to~$f$, see \cref{fig:example_circumference} for examples.
\begin{figure}
    \savebox{\imagebox}{\includegraphics[page=3]{figures/example_circumference.pdf}}
    \centering
    \subcaptionbox{\label{fig:example_circumference-1}}{\usebox{\imagebox}}
    \hfil
    \subcaptionbox{\label{fig:example_circumference-2}}{\raisebox{\dimexpr.5\ht\imagebox-.5\height}{\includegraphics[page=4]{figures/example_circumference.pdf}}}
    \caption{
    Examples where faces $f$ (a) and $g$ (b) have both circumference $\circc(f)=\circc(g)=10$.}
    \label{fig:example_circumference}
\end{figure}

\begin{restatable}{lemma}{TwoOnePlaneCrossingInFace}
\label{lem:2-col_1-plane_crossings_in_face}
    In a filled non-homotopic $2$-colorable $1$-plane drawing~$\Gamma$ on $n \geq 5$~vertices, every face~$f$ of the plane skeleton~$\Gamma_0$ contains at most $\ccross(\circc(f))$ crossings.
\end{restatable}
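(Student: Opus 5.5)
We would work face by face. Fix a face $f$ of the plane skeleton~$\Gamma_0$ and let $x=\circc(f)$. Every edge drawn inside $f$ is crossed, and, by \cref{obs:kt-plane}, it is crossed at most twice, at most once per color. So the crossings inside $f$ form a conflict graph of maximum degree~$2$, that is, a disjoint union of paths and cycles. We would first record what filledness says about the cells of~$\Gamma$ inside~$f$. A cell whose boundary contains two distinct vertices must have a plane edge joining them on its boundary. Since all edges interior to $f$ are crossed, every cell inside $f$ is of one of three kinds:
\begin{enumerate}[(i)]
\item a cell incident to a side of $\partial f$, which contains exactly the two endpoints of that side;
\item a cell containing exactly one vertex;
\item a cell containing no vertex.
\end{enumerate}
A cell of kind (iii) is bounded only by inner segments, so its crossings form a cycle component of the conflict graph. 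It is therefore an inner cycle, and by \cref{obs:2-plane_inner_cell} its size is at least~$4$ and divisible by~$4$. Non-homotopy rules out cells of size~$2$, so every cell has size at least~$3$.

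Next we would run an Euler-type count on the planarization restricted to the closed disk bounded by $f$, unfolding the boundary walk if sides repeat so that it has length~$x$. Let $m$ be the number of edges inside $f$ and $c$ the number of crossings. Then $V=x+c$ and $E=x+m+2c$, so the disk contains $m+c+1$ cells. The total size of these cells is $x+2(m+2c)$. To supplement this, we would count vertex corners: each crossing contributes $4$ corners and each vertex of $\partial f$ contributes one corner per incident cell. Each kind~(i) cell has at least three sides, and two of its boundary pieces are outer segments. We would then weight the three kinds of cells differently, charging each cell its size minus a constant depending on its kind, so as to obtain a linear inequality of the form $c\le x-1+\epsilon$. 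Here $\epsilon$ counts the cells of kind~(iii), which can only be present with a surplus coming from the $4$-divisibility of their size.

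The delicate part is the final case analysis. For $x\le 3$, filledness forces a triangle with no interior edges, so $c=0$. For $x=4$, only the two diagonals and their possible parallel copies fit, and non-homotopy together with $1$-planarity per color should bound $c$ by~$2$. For $x\ge 5$, we need to show that $c=x$ is achievable only when some cycle component (an inner cycle) is present, and that the counting then forces $x\equiv 0 \pmod 4$. The idea is that the conflict-graph cycle wraps once around $f$, with its endpoints alternating with the boundary vertices, and \cref{obs:2-plane_inner_cell} fixes its length modulo~$4$. We expect this step to be the main obstacle, namely making the discharging tight enough to show that, without an inner cycle, each boundary side ``pays'' for at most one crossing minus one overall. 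We would try first to prove that the conflict graph inside $f$ has at most one cycle component. Our argument would be that two disjoint inner cycles would require an extra separating structure that costs boundary sides. Having that, we would handle the path-only case by induction on $x$, splitting $f$ along an outermost uncrossed-by-others edge.
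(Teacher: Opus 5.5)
\textbf{There is a genuine gap.} Your proposal stops exactly where the proof has to happen. No discharging weights are specified, $c\le x-1+\epsilon$ is only a target, and the implication from $c=x$ to $x\equiv 0\pmod 4$ is announced as the main obstacle rather than proved.

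The route you plan for it also does not work. The Euler count on ``the closed disk bounded by $f$'' presupposes that $f$ is a disk, but faces of $\Gamma_0$ may have disconnected boundary. Your intermediate claim that the conflict graph inside $f$ has at most one cycle component is false. Take two disjoint wheels with octagonal rims. Around each rim $v_1\dots v_8$, draw the eight edges $v_{i-1}v_{i+1}$ as arcs just outside the rim, passing around $v_i$. Each ring is an $8$-cycle of the conflict graph (colored blue, blue, orange, orange, \dots). The cells are the triangles of the wheels, triangles on a rim side, cells whose only vertex is some $v_i$, and the unbounded cell, which is bounded solely by inner segments of both rings. So the drawing is filled, non-homotopic and $2$-colorable $1$-plane with $n=18$. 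Yet the outer face $f$ of $\Gamma_0$ has $\circc(f)=16$ and contains $16$ crossings forming \emph{two} inner cycles. Moreover, knowing that \emph{some} inner cycle is present would not give $x\equiv 0\pmod 4$ anyway; you need \emph{every} crossed edge inside $f$ to lie on an inner cycle.

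The missing idea is a direct injection from crossings into sides of $f$, which makes the Euler count and the discharging unnecessary. Every edge is crossed at most twice, so at a crossing $x$ of $e$ and $e'$ there is an outer segment of $e$ ending at $u$ and an outer segment of $e'$ ending at $v$. Segments of $e$ and $e'$ alternate around $x$, so these two bound a common cell, and $u\neq v$ because crossing edges are non-adjacent. Filledness then yields a plane edge $uv$ on the boundary of that cell, i.e., a side of $f$ covered by $x$, and no side is covered twice. Hence the set $X(f)$ of crossings in $f$ satisfies $|X(f)|\le\circc(f)$.

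If equality holds, every crossing covers exactly one side. Every pair consisting of an outer segment of $e$ and one of $e'$ at $x$ yields a covered side, so each of $e,e'$ has exactly one outer segment at $x$, i.e., is crossed twice. Therefore all conflict-graph components inside $f$ are cycles. Then $|X(f)|$ equals the number of inner segments in $f$, which is a sum of inner-cycle sizes, each divisible by $4$ by \cref{obs:2-plane_inner_cell}.

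Your small cases also need repair. A face with $\circc(f)=4$ need not be a quadrilateral: it can be a $2$-cycle containing a pendant or an independent edge, or be bounded by two edges each having both sides on $f$. That last configuration is exactly the one excluded by $n\geq 5$ (then $f$ is the only face and $\Gamma$ has at most four vertices), and your proposal never uses this hypothesis.
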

\begin{proof}
    Let~$X(f)$ denote the set of crossings inside the face~$f$.
    As the drawing is $2$-plane, each crossing~$x \in X(f)$ is incident to two outer segments with distinct endpoints~$u$ and~$v$.
    The endpoints~$u$ and~$v$ lie on a common cell~$c$ of~$\Gamma$.
    That is, they are joined by an edge~$uv$ on the boundary of~$f$ as the drawing is filled.
    We say that the side of the edge~$uv$ incident to~$c$ is \emph{covered} by the crossing~$x$.
    Note in particular that no other crossing covers the same side of~$uv$.
    As each crossing within~$f$ covers a (distinct) side of an edge of~$f$, we obtain
    $\abs{X(f)} \leq \circc(f)$.

    If~$\circc(f) \leq 3$, then~$f$ is incident to at most three vertices.
    As crossing edges do not share an endpoint, we obtain~$\abs{X(f)} = 0 = \ccross(\circc(f))$.

    If~$\circc(f) = 4$, then~$f$ is either incident to
    \begin{enumerate}
    \item two edges, both of which have all sides on~$f$,
    \item three edges, where two have only one side on~$f$ and one edge has both sides on~$f$,
    \item or four edges, all of which only have one side on~$f$.
    \end{enumerate}
    In the first case, $\Gamma_0$ has no other face than~$f$, see \cref{fig:small_face_crossings-1} for an illustration.
    In particular, $\Gamma$ contains at most four vertices and \cref{lem:2-col_1-plane_crossings_in_face} does not apply since we assume that~$\abs{V(\Gamma)} \geq 5$.
    In the second case, $f$ contains at most two crossings, see \cref{fig:small_face_crossings-2}-\cref{fig:small_face_crossings-3} and we have
    $\abs{X(f)} \leq 2 \leq \ccross(\circc(f))$.
    In the third case, the boundary of~$f$ is a $4$-cycle.
    That is,~$f$ contains at most two crossing edges and~$\abs{X(f)} \leq 1 \leq \ccross(\circc(f))$, see \cref{fig:small_face_crossings-4}.

    Otherwise~$\circc(f) \geq 5$.
    Recall that $\abs{X(f)} \leq \circc(f)$.
    Now suppose $\abs{X(f)} = \circc(f)$.
    It remains to show that $\circc(f) \equiv 0 \pmod 4$.

    We first argue that $\abs{X(f)} = \abs{S_{\mathrm{inner}}(f)}$ where~$S_{\mathrm{inner}}(f)$ denotes the set of inner segments within~$f$.
    Note that all sides of~$f$ are covered.
    That is, exactly one of the four cells of~$\Gamma$ incident to a crossing~$x \in X(f)$ is incident to a side of~$f$.
    In particular, each crossing~$x \in X(f)$ is incident to two inner segments, and each inner segment~$s \in S_{\mathrm{inner}}(f)$ to two crossings.
    Double-counting the incidences between~$X(f)$ and~$S_{\mathrm{inner}}(f)$ yields $2\abs{X(f)} = 2\abs{S_{\mathrm{inner}}(f)}$.

    Consider the auxiliary graph~$H$ whose vertices correspond to the inner segments~$S_{\mathrm{inner}}(f)$ and where two vertices are connected if the corresponding segments are incident to the same crossing.
    Each vertex in~$H$ has degree~$2$.
    Thus,~$H$ is a disjoint union of cycles, each corresponding to an inner cycle of~$\Gamma$ within~$f$.
    By \cref{obs:2-plane_inner_cell}, each inner cycle of size~$s$ satisfies~$s \equiv 0 \mod 4$.
    In particular, we obtain $\circc(f) \equiv \abs{X(f)} \equiv \abs{S_{\mathrm{inner}}(f)} \equiv 0 \mod 4$ which concludes the proof.
\end{proof}

\begin{figure}
    \centering
    \savebox{\imagebox}{\includegraphics[page=1]{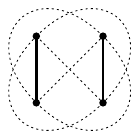}}
    \subcaptionbox{\label{fig:small_face_crossings-1}}{\usebox{\imagebox}}
    \hfil
    \subcaptionbox{\label{fig:small_face_crossings-2}}{\includegraphics[page=2]{figures/small_face_crossings.pdf}}
    \hfil
    \subcaptionbox{\label{fig:small_face_crossings-3}}{\includegraphics[page=3]{figures/small_face_crossings.pdf}}
    \hfil
    \subcaptionbox{\label{fig:small_face_crossings-4}}{\raisebox{\dimexpr.5\ht\imagebox-.5\height}{\includegraphics[page=4]{figures/small_face_crossings.pdf}}}
    \caption{A face~$f$ of the plane skeleton (thick) of a $2$-colorable $1$-plane drawing with~$\circc(f) = 4$. Edges inside~$f$ are represented with dotted lines. (a) If~$f$ is only incident to two edges, $f$ is the only face but might contain many crossings. (b)-(c) If~$f$ is incident to three edges, then $f$ is a $2$-cycle with an independent edge or a pendant edge inside. (d) Otherwise, $f$ is a $4$-cycle and contains at most one crossing.}
\end{figure}

Yet, the same bound does not apply to non-filled drawings, see \cref{fig:example_face_with_too_many_crossings}.
\begin{figure}
    \centering
    \savebox{\imagebox}{\includegraphics[page=2]{figures/example_face_with_too_many_crossings.pdf}}
    \subcaptionbox{\label{fig:example_face_with_too_many_crossings-1}}{\usebox{\imagebox}}
    \hfil
    \subcaptionbox{\label{fig:example_face_with_too_many_crossings-2}}{\includegraphics[page=3]{figures/example_face_with_too_many_crossings.pdf}}
    \caption{A (non-filled) $2$-colorable $1$-plane drawing~$\Gamma$ with plane edges (thick) and crossed edges (dotted). Each gray area in (a) contains a crossing component consisting of eight edges as depicted in (b). The inner face~$f \in C(\Gamma_0)$ in (a)
    with $\circc(f) = 8+8\cdot6 = 56$ contains $8+8\cdot 8= 72$~crossings.}
    \label{fig:example_face_with_too_many_crossings}
\end{figure}
For filled drawings, we now obtain an upper bound on the number of crossings.
\twoColOnePlaneCrossings*
\begin{proof}
    Let~$a_i$ be the number of faces~$f$ in the plane skeleton~$\Gamma_0$ with~$\circc(f) = i$ and let~$X$ be the set of crossings in~$\Gamma$.
    For~$i \geq 3$, we define $t_i = i-2$.
    If~the boundary of~$f$ is connected, $t_i$ corresponds to the number of triangular faces the face~$f$ would contain in a plane triangulation of~$\Gamma_0$.
    Otherwise, the number~$t_i$ provides a lower bound on this number.
    As a plane triangulation of~$\Gamma_0$ consists of~$2n-4$ faces, we obtain $\sum_{i=3}^{\infty} t_i \cdot a_i \leq 2n-4$.
    By \cref{lem:2-col_1-plane_crossings_in_face}, we have
    \[
    \abs{X} \leq \sum_{i=3}^{\infty} \ccross(i) \cdot a_i = \sum_{i=3}^{\infty} \frac{1}{t_i} \cdot \ccross(i) \cdot  t_i \cdot a_i \leq \sum_{i=3} \frac{4}{3} \cdot t_i \cdot a_i \leq \frac{8}{3} \cdot (n-2) = 2.\overline{6}(n-2).
    \]
    where we used in the second step, that $\frac{1}{t_i} \ccross(i) \leq \frac{1}{t_8} \ccross(8) = \frac{4}{3}$ for all~$i \geq 3$.
    Indeed, for~$i=3$, we have $\frac{1}{t_i} \ccross(i) = 0$ and for~$i=4$, we obtain $\frac{1}{t_i} \ccross(i) \leq 1$.
    For~$i$ with $5 \leq i \leq 7$ we have $\frac{1}{t_i} \ccross(i) \leq \frac{i-1}{i-2} \leq \frac{4}{3}$, and for~$i \geq 8$ we obtain~$\frac{1}{t_i} \ccross(i) \leq \frac{i}{i-2} \leq \frac{4}{3}$.
\end{proof}
As every $2$-colorable $1$-plane drawing can be augmented to a filled one by only adding plane edges, \cref{thm:crossings_in_2-col_1-plane} follows.
A drawing that matches the upper bound given in \cref{thm:crossings_in_2-col_1-plane} can be obtained from a tiling of $8$-gons by inserting a $\FVIII$-configuration in each face, see \cref{fig:crossing_density_2-col_1-plane_lower-1}.
\begin{figure}
    \savebox{\imagebox}{\includegraphics[page=1]{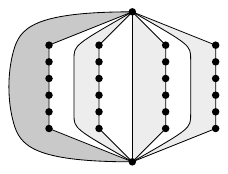}}
    \centering
    \subcaptionbox{\label{fig:crossing_density_2-col_1-plane_lower-1}}{\centering\usebox{\imagebox}}
    \hfil
    \subcaptionbox{\label{fig:crossing_density_2-col_1-plane_lower-2}}{\raisebox{\dimexpr.5\ht\imagebox-.5\height}{\includegraphics[page=2]{figures/crossing_density_2-col_1-plane_lower.pdf}}}
    \caption{Construction of a $2$-colorable $1$-plane drawing on $n$~vertices with $2.\overline{6}(n-2)$~crossings. (a) A plane drawing on $n = 26$~vertices where each face has size~$8$. (b) To each face~$f$ all diagonals connecting two vertices at distance~$2$ on the boundary of~$f$ are added.}
    \label{fig:crossing_density_2-col_1-plane_lower}
\end{figure}

\begin{proposition}
\label{prop:crossing_density_2-col_1-plane_lower}
    For every $n = 2+6\ell$ with $\ell \in \mathbb{N}^+$, there exists a non-homotopic $2$-colorable $1$-plane drawing of a graph on~$n$ vertices with $2.\overline{6}(n-2)$~crossings and $4(n-2)$~edges.
\end{proposition}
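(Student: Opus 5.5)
We build the drawing in two stages: first a plane skeleton~$\Gamma_0$ whose faces are all $8$-cycles, then a fixed crossing pattern inside every face. The counts then follow from Euler's formula.

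\textbf{The skeleton.} For $n = 2+6\ell$, we want a connected plane simple graph on $n$ vertices in which every face, including the outer one, is bounded by an $8$-cycle. Counting face--edge incidences gives $2m = 8f$. Euler's formula $n-m+f=2$ then yields $f = \frac{1}{3}(n-2) = 2\ell$ and $m = \frac{4}{3}(n-2) = 8\ell$.

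For $\ell=1$ (so $n=8$), we take an $8$-cycle; its two faces are both octagons. To get larger~$\ell$, we glue. We take the drawing for $\ell-1$ and the one for $\ell=1$ (two octagons) and identify a face of one with a face of the other, as done in the proof of \cref{prop:edge_density_2-col_1-plan_lower}. This is not quite right as stated, since identifying two $8$-cycles adds no faces. Instead, we use the tiling in \cref{fig:crossing_density_2-col_1-plane_lower-1} for $\ell=4$ ($n=26$) as a building block, or we attach a "ladder" of octagons.

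Concretely, we glue along a boundary path. Take an octagonal face $f$ of the current drawing and insert a path of length~$\ge 1$ inside it, connecting two vertices of~$f$. This splits $f$ into two faces. To keep both faces octagons, the new path must have $6$ new internal vertices... which contradicts the count of $6$ vertices per $2$ faces.

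So the gluing is done by identification instead. Take two copies of the current octagonal graph. Remove the interiors of one face $f$ in each. Then identify the two boundary $8$-cycles. This adds $n-8$ vertices, which does not fit the step of $6$ either.

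The simplest correct step is the following. Insert into an octagonal face a path $P$ of $3$ edges between two vertices at distance~$4$ on~$\partial f$. That gives $2$ new vertices and faces of length $4+3=7$, which is odd and fails. So we choose the step adding $6$ vertices, $8$ edges and $2$ faces: place $6$ new vertices in $f$ and add $8$ edges so that $f$ is split into $3$ octagons. This works, for example, with two disjoint paths of $3$ internal vertices each, attached so that the face lengths are all~$8$. We verify this face-length bookkeeping in the figure. This step is the main obstacle, because we must ensure simplicity and that all faces are $8$-cycles (not just closed walks of length~$8$).

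\textbf{The crossing pattern.} Inside every octagonal face $v_0,\dots,v_7$ we add the eight short diagonals $v_iv_{i+2}$. They form two squares, $v_0v_2v_4v_6$ and $v_1v_3v_5v_7$. Each diagonal $v_iv_{i+2}$ is crossed exactly by $v_{i-1}v_{i+1}$ and $v_{i+1}v_{i+3}$, so there are $8$ crossings per face. The inner segments form a single inner cycle of size~$8$.

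We color the diagonals in the pattern of \cref{obs:2-plane_inner_cell}. Going around, the diagonals $v_0v_2, v_1v_3, \dots, v_7v_1$ are colored blue, blue, orange, orange, blue, blue, orange, orange. Then each diagonal is crossed by its two cyclic neighbours, which in this pattern have distinct colors. The skeleton edges are plane and get any color. Hence the drawing is $2$-colorable $1$-plane.

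It is non-homotopic and simple. Each diagonal joins vertices at distance $2$ on an $8$-cycle, so it is not a skeleton edge. Two different faces share at most an edge, so they cannot create the same chord twice. We need to pick the skeleton so that no pair $v_iv_{i+2}$ is also a distance-$2$ pair in another face. Any duplicate could otherwise be drawn non-homotopically anyway, since the two copies lie in different faces, and a lens between them then contains vertices.

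\textbf{Counting.} The number of crossings is $8f = \frac{8}{3}(n-2) = 2.\overline{6}(n-2)$. The number of edges is $m + 8f = \frac{4}{3}(n-2)+\frac{8}{3}(n-2) = 4(n-2)$.
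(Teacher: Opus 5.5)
Your approach is the paper's own: an all-octagon plane skeleton, the eight distance-$2$ diagonals in each face coloured blue, blue, orange, orange around the resulting inner $8$-cycle, and Euler's formula giving $f=\frac{n-2}{3}$. This yields $8f=2.\overline{6}(n-2)$ crossings and $m+8f=4(n-2)$ edges. The crossing pattern, the colouring check and the arithmetic are correct.

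What is missing is the skeleton itself: you never exhibit an octagonal plane graph for every $n=2+6\ell$. The decisive step is only asserted (``attached so that the face lengths are all~$8$'', to be checked in a figure you do not provide). The discussion leading to it also contains a miscount. A path $v_0abcv_4$ with three new internal vertices, joining antipodal vertices of an octagonal face $v_0\dots v_7$, splits it into the two octagons $v_0v_1v_2v_3v_4cba$ and $v_4v_5v_6v_7v_0abc$, so six internal vertices on one path are not needed. A clean fix, analogous to the paper's construction in the $3$-colourable case, is to join two poles by $2\ell$ internally disjoint paths of length~$4$. Every face is then bounded by two consecutive paths, hence is an $8$-cycle, and $n=2+3\cdot 2\ell$.

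Second, your claim that the resulting graph is simple is false, as is ``two different faces share at most an edge''. The requirement that no pair $v_iv_{i+2}$ be a distance-$2$ pair in another face can never be met. Every vertex lies on a face cycle and so has degree at least~$2$, while the average degree is $2m/n=\frac{8(n-2)}{3n}<3$. Hence some vertex $w$ has degree exactly~$2$, and its neighbours $u,v$ are at distance~$2$ on both faces at $w$, so $uv$ is inserted twice. (In your base case, the $8$-cycle, all eight diagonals are doubled.) The drawing is therefore necessarily a multigraph, which is acceptable only because the paper allows parallel edges in non-homotopic drawings.

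Your fallback lens argument is thus the load-bearing step and should be made precise. The two copies of $uv$ lie in different faces $f_1\neq f_2$. The copy in $f_1$ separates $f_1$ into two parts lying on opposite sides of the closed curve formed by both copies. Each part has on its boundary the interior vertices of one of the two $u$--$v$ arcs of $\partial f_1$, so both sides of the lens contain a vertex. Finally, no diagonal coincides with a skeleton edge: all faces are even, so the skeleton is bipartite and has no triangles.
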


There are $3$-plane drawings matching both the maximum crossing and edge density among all $3$-plane drawings~\cite{goetze2025crossing}.
Yet, while the upper bounds for edge and crossing density for $2$-colorable $1$-plane drawings are tight (cf. \cref{prop:edge_density_2-col_1-plan_lower,prop:crossing_density_2-col_1-plane_lower}), no drawings matching both bounds are known.

\section{Relationship between \texorpdfstring{$\bm{t}$}{t} and \texorpdfstring{$\bm{k}$}{k}}
\label{sec:relationship_t_k}

Every $t$-colorable $k$-plane drawing is $tk$-plane, yet there are $tk$-plane drawings which are not $t$-colorable $k$-plane.
Indeed, $2$-plane drawings on $n$~vertices can contain up to $5(n-2)$~edges \cite[Theorem~1]{pach1997graphsFewCrossings}, while the upper bound for $2$-colorable $1$-plane drawings is~$4.\overline{3}(n-2)$  (cf. \cref{cor:edge_density_2-col_1-plane_upper}) and we also know that the densest $3$-plane drawings are not $3$-colorable $1$-plane (cf. \cref{sec:edge_density_3-col_1-plane}).
Yet, some of the densest $4$-plane drawings are also $2$-colorable $2$-plane.
Is in fact \emph{every} $4$-plane drawing also $2$-colorable $2$-plane?
This turns out not to be the case.
\begin{theorem}
\label{thm:tk-plane_not_t-col_k-plane}
    For every~$k \in \N$ and $t \geq 2$ there exists a $kt$-plane drawing~$\Gamma$ that is not $t$-colorable $k$-plane.
\end{theorem}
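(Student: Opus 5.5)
The plan is to work entirely through the conflict graph, using \cref{obs:correspondence_frugal_conflict_graph}. I will exhibit a drawing~$\Gamma$ whose conflict graph is the complete graph $H = K_{kt+1}$. Then two things must be checked: that $H$ has maximum degree~$kt$, so that $\Gamma$ is $kt$-plane, and that $H$ admits no $k$-frugal $t$-vertex-coloring.

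For the drawing, place $2(kt+1)$ points in convex position on a circle. Join each point to its antipodal point by a straight-line segment. This gives $kt+1$ edges with pairwise distinct endpoints. Any two of these diameters cross exactly once, at the center of the circle. To respect the convention that at most two edges cross in a point, perturb the points slightly so that the crossings are distinct; two chords whose endpoints alternate around the circle still cross. Every edge is then crossed exactly $kt$~times, so $\Gamma$ is $kt$-plane. The drawing is also simple (no parallel edges), so the non-homotopy condition holds trivially. Its conflict graph is $K_{kt+1}$.

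The main step is the counting argument that $K_{kt+1}$ has no $k$-frugal $t$-coloring $\Phi$ when $t \geq 2$. Each vertex has exactly $kt$ neighbors, and each of the $t$~colors may appear at most $k$~times among them. Hence every color appears exactly $k$~times in every neighborhood. Let $n_i = \abs{\Phi^{-1}(i)}$. Take a vertex~$v$ of color~$c$: its neighborhood contains $n_c - 1$ vertices of color~$c$, so $n_c = k+1$. Since $t \geq 2$ and the neighborhood of $v$ contains all $t$ colors, some other color $d \neq c$ is used by some vertex~$u$. The neighborhood of~$u$ contains all $n_c = k+1$ vertices of color~$c$, which is more than $k$. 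This contradiction shows that no such coloring exists.

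By \cref{obs:correspondence_frugal_conflict_graph}, $\Gamma$ is therefore not $t$-colorable $k$-plane, which proves \cref{thm:tk-plane_not_t-col_k-plane}. I do not expect a real obstacle. The only points needing care are the general-position perturbation of the drawing and making sure the argument genuinely uses $t \geq 2$. It must: for $t = 1$ the statement fails, since every $k$-plane drawing is trivially $1$-colorable $k$-plane.
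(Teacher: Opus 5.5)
Your proposal is correct and follows the paper's proof. Both take a drawing of $kt+1$ pairwise crossing edges whose conflict graph is $K_{kt+1}$, and conclude via \cref{obs:correspondence_frugal_conflict_graph} that no $k$-frugal $t$-coloring exists because some color class has $k+1$ vertices, all adjacent to any vertex of another color. Your explicit perturbed-diameter realization and your ``exactly $k$ of each color per neighborhood'' count are only minor refinements; the latter also makes explicit that a vertex outside the large class exists, which the paper's pigeonhole argument leaves implicit.
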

\begin{proof}
    Consider a drawing~$\Gamma$ on $kt+1$~pairwise intersecting edges.
    The corresponding conflict graph is the complete graph~$K_{kt+1}$.
    As $\Delta(K_{kt+1}) = kt$, each edge is crossed at most $kt$ times and the drawing $\Gamma$ is $kt$-plane.
    However, $\Gamma$ is not $t$-colorable $k$-plane as $\varphi_k(K_{kt+1}) \geq t+1$ (cf. \cref{obs:correspondence_frugal_conflict_graph}).
    Indeed, in every $t$-vertex-coloring of~$K_{kt+1}$ there exists a color class~$C$ of size at least~$\left\lceil{\frac{kt+1}{t}}\right\rceil = k+1$.
    Thus, every vertex~$v \in V(K_{kt+1})$ that is not part of~$C$ has at least $k+1$ monochromatic neighbors, i.e. $\Phi$ is not $k$-frugal.
\end{proof}

In general~$t$ and~$k$ are not interchangeable, that is there are $t$-colorable $k$-plane drawings that are not $k$-colorable $t$-plane.
Yet, are these drawings $f(k,t)$-colorable $t$-plane for some~$f(k,t)$ which only depends on~$k$ and~$t$?
The conflict graph~$H$ of a $t$-colorable $k$-plane drawing has maximum degree~$\Delta(H) \leq kt$.
As Kang and Müller bounded the $k$-frugal coloring number of a graph~$H$ in terms of~$\Delta(H)$, this immediately yields the following (cf. \cref{obs:frugal_lower_bound,obs:correspondence_frugal_conflict_graph}).
\begin{theorem}[{Kang and Müller~\cite[Theorem~3.5]{kangFrugalAcyclicStar2011}}]
\label{cor:t-col_k-plane_k-col_t-plane}
    There is a function~$f\colon \N \times \N \to \N$ such that every $t$-colorable $k$-plane drawing is $f(k,t)$-colorable $t$-plane for every~$t$ and~$k$.
\end{theorem}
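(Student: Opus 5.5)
The plan is to reduce the statement to a purely graph-theoretic bound on frugal colorings of the conflict graph, and then take the frugal coloring result of Kang and Müller as a black box.

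\textbf{Step 1: bound the degree of the conflict graph.} Let $\Gamma$ be a $t$-colorable $k$-plane drawing and let $H$ be its conflict graph. By \cref{obs:kt-plane}, $\Gamma$ is $kt$-plane. So every edge of $\Gamma$ is crossed at most $kt$ times, which gives $\Delta(H) \leq kt$.

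\textbf{Step 2: translate the goal.} By \cref{obs:correspondence_frugal_conflict_graph} with the roles of the parameters exchanged, $\Gamma$ is $s$-colorable $t$-plane if and only if $H$ admits a $t$-frugal $s$-vertex-coloring. It therefore suffices to show that every graph $H$ with $\Delta(H) \leq kt$ satisfies $\varphi_t(H) \leq f(k,t)$ for some function $f$ depending only on $k$ and $t$.

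\textbf{Step 3: invoke the frugal coloring bound.} Kang and Müller \cite[Theorem~3.5]{kangFrugalAcyclicStar2011} bound the (not necessarily proper) $t$-frugal chromatic number of a graph in terms of its maximum degree. Concretely, there is a function $g$ with $\varphi_t(H) \leq g(\Delta(H), t)$. We may assume $g$ is monotone in $\Delta$, replacing it by its running maximum if necessary. Then $f(k,t) := g(kt, t)$ is the desired function. Applying this bound to $H$ with $\Delta(H) \leq kt$ yields a $t$-frugal $f(k,t)$-vertex-coloring of $H$. Transferring that coloring back to the edges of $\Gamma$ gives a $t$-plane $f(k,t)$-edge-coloring.

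\textbf{Fallback if the cited bound does not fit.} If their theorem is stated only asymptotically, or only for proper colorings, there is a direct argument with an explicit $f$. A proper coloring is in particular frugal as a coloring in our sense, since we only need each color to appear at most $t$ times in every neighborhood. So it suffices to properly color the square $H^2$, which is exactly a $1$-frugal proper coloring and hence $t$-frugal. Greedy coloring does this with at most $\Delta(H)^2 + 1 \leq k^2t^2 + 1$ colors, so $f(k,t) = k^2t^2 + 1$ works. A sharper bound such as $O(kt)$ colors for fixed $t$ could be obtained with the Lovász Local Lemma, but it is not needed for the existence of $f$.

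\textbf{Main obstacle.} The only real subtlety is that $H$ is a string graph realized by the specific drawing. Frugality of the vertex coloring must coincide exactly with the per-color crossing condition, and this is precisely \cref{obs:correspondence_frugal_conflict_graph}. Everything else is bookkeeping.
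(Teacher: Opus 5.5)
Your argument is the paper's: bound $\Delta(H)\le kt$ for the conflict graph via \cref{obs:kt-plane}, apply Kang and Müller's bound on the $t$-frugal chromatic number in terms of the maximum degree, and transfer the coloring back to the edges via \cref{obs:correspondence_frugal_conflict_graph}. Your fallback is also correct and is a useful addition, since it does not rely on the citation and gives an explicit $f(k,t)=k^2t^2+1$: a greedy proper coloring of $H^2$ uses at most $\Delta(H)^2+1$ colors and is $1$-frugal, hence $t$-frugal.
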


\begin{figure}
        \savebox{\imagebox}{\includegraphics[page=9]{figures/t-k_k-t_construction.pdf}}
    \centering
    \begin{subfigure}[b]{0.4\textwidth}
        \centering
        \usebox{\imagebox}
        \subcaption{}
        \label{fig:t-k_k-t_drawing}
    \end{subfigure}
    \begin{subfigure}[b]{0.4\textwidth}
        \centering
        \raisebox{\dimexpr.5\ht\imagebox-.5\height}{\includegraphics[page=8]{figures/t-k_k-t_construction.pdf}}
        \subcaption{}
        \label{fig:t-k_k-t_frugal_coloring}
    \end{subfigure}
    \caption{
    (a) A $3$-colorable $2$-plane drawing (center) on nine edges that is not $2$-colorable $3$-plane. The three color classes of the $2$-plane $3$-edge-coloring are represented separately in the gray regions. (b) The conflict graph of~$\Gamma$ with the corresponding $2$-frugal $3$-coloring.}
    \label{fig:t-k_k-t_t_construction}
\end{figure}

We have already seen examples of $t$-colorable $k$-plane drawings that are not $k$-colorable $t$-plane for small values of~$t$ and~$k$.
In the remainder of this section, we provide a general family of such examples and prove the following.
\begin{restatable}{proposition}{TKNotKTPlanar}
\label{prop:k_t_t_k_construction}
    For every $k,t \in \N$ such that $t \nequiv 0 \mod k$, there exists a $t$-colorable $k$-plane drawing that is not $k$-colorable $t$-plane.
\end{restatable}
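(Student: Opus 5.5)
The plan is to pick a suitable conflict graph, realize it as a drawing, and compare frugal colorings via \cref{obs:correspondence_frugal_conflict_graph}. The conflict graph will be the complete $(k+1)$-partite graph $H=K_{t,t,\dots,t}$ with $k+1$ parts $P_1,\dots,P_{k+1}$ of size~$t$ each. Every vertex of~$H$ has degree exactly $kt$.

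\textbf{Realizing $H$ as a drawing.} Choose $k+1$ pairwise distinct slopes. For each slope, draw $t$ parallel straight-line segments, all long and all passing very close to the origin. Endpoints are chosen pairwise distinct and in general position, so that no three segments meet in a point. Two segments of the same slope are parallel and do not cross. Two segments of different slopes cross exactly once near the origin. Since all edges are straight-line segments between distinct endpoints (a matching), no adjacent edges cross and the drawing is non-homotopic. Its conflict graph is exactly~$H$.

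\textbf{$H$ is $k$-frugally $t$-colorable.} Give the $t$ vertices of each part~$P_j$ the $t$ distinct colors, one each. A vertex in~$P_j$ is adjacent to all vertices of the $k$ other parts. Each of those parts contributes exactly one neighbor of every color, so every color appears exactly $k$ times in its neighborhood. By \cref{obs:correspondence_frugal_conflict_graph}, the drawing is therefore $t$-colorable $k$-plane.

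\textbf{$H$ is not $t$-frugally $k$-colorable.} Suppose $\Phi$ is a $t$-frugal $k$-coloring of~$H$. Each vertex has $kt$ neighbors and each of the $k$ colors may occur at most $t$ times among them, so every color occurs \emph{exactly} $t$ times in every neighborhood. Fix color~$1$ and let $a_i$ be the number of vertices of~$P_i$ with color~$1$. The neighborhood of a vertex in~$P_j$ is $\bigcup_{i\neq j} P_i$, which gives $\sum_{i\neq j} a_i = t$ for every $j\in[k+1]$. Subtracting these equations for two indices $j,j'$ yields $a_j=a_{j'}$, so all $a_i$ equal a common value~$a$. Substituting back gives $ka=t$, i.e.\ $t\equiv 0 \bmod k$, which contradicts the assumption. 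Hence the drawing is not $k$-colorable $t$-plane.

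The only delicate point is the geometric realization: all edges must be simple and non-homotopic, adjacent edges must not cross, and crossings must be in general position. Using straight segments with distinct endpoints handles all of these requirements at once. The counting step is the heart of the argument, and it relies on the degree being exactly $kt$, which forces equality in the frugality condition. The paper's $9$-edge example is the case $k=2$, $t=3$, i.e.\ $K_{3,3,3}$ with the parts playing the role of $t$-sets.
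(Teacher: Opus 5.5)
Your proof is correct and uses the same construction as the paper. Both take the $t$-blow-up of $K_{k+1}$, realize it by $k+1$ families of $t$ parallel segments, use the coloring that gives each part all $t$ colors as the witness for $t$-colorable $k$-planarity, and rely on \cref{obs:correspondence_frugal_conflict_graph}.

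The difference is in how you show that no $t$-frugal $k$-coloring exists.

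\textbf{The paper's argument.} It singles out the last part $A_{k+1}$. Every color appears exactly $t$ times on $A_1\cup\dots\cup A_k$, so for each color $c$ some part $B_c$ among the first $k$ carries at most $\lfloor t/k\rfloor$ vertices of color $c$. The neighborhood of a vertex in $B_c$ then shows that at most $\lfloor t/k\rfloor$ vertices of $A_{k+1}$ can receive color $c$. Summing over the $k$ colors, at most $k\lfloor t/k\rfloor<t$ vertices of $A_{k+1}$ can be colored.

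\textbf{Your argument.} You use the forced equality at every part simultaneously. This gives the linear system $\sum_{i\neq j}a_i=t$ for all $j$, so all $a_j$ coincide and $ka=t$.

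\textbf{Comparison.} Your version is shorter and symmetric, and it yields slightly more. In any $t$-frugal $k$-coloring, every color meets every part in exactly $t/k$ vertices. Together with the obvious converse, this shows that $H$ is $t$-frugally $k$-colorable if and only if $k\mid t$. The paper additionally proves property~(ii) of \cref{prop:blow_up_properties}: all vertices of a part receive distinct colors in any $k$-frugal $t$-coloring. That property is not needed here, but the paper reuses it later for the color generators in the hardness lifting.
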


To construct such drawings, we first need some drawing~$\Gamma$ where every $k$-plane edge-coloring uses exactly $t$~colors.
That is, for the conflict graph~$H$ of~$\Gamma$, we have $\varphi_k(H) = t$ (cf. \cref{obs:correspondence_frugal_conflict_graph}).
In fact, the \emph{$t$-blow-up}~$H$ of the complete graph~$K_{k+1}$ has this property (cf. \cref{prop:blow_up_properties}).
The graph~$H$ is the (balanced) complete $(k+1)$-partite graph on $(k+1)t$ vertices.
That is, $H$ consists of $k+1$ \emph{parts}~$A_1, \dots, A_{k+1}$, each containing $t$~vertices.
Vertices of different parts~$A_i$ are joined with edges, yet there are no edges within any part~$A_i$, i.e.,
    \[E(H) = \set{v_iv_j \given i,j \in [k+1], i \neq j, v_i \in A_i, v_j \in A_j}.\]

\begin{proposition}
\label{prop:blow_up_properties}
    Let $k,t \in \N$ and~$H$ be the $t$-blow-up of the complete graph~$K_{k+1}$ with parts~$A_1, \dots, A_{k+1}$, each containing $t$~vertices.
    We have
    \begin{enumerate}[(i)]
        \item\label{itm:k-plane_t-colorable} $\varphi_k(H) = t$
        \item\label{itm:coloring_all_distinct_colors} in every $k$-frugal $t$-coloring of~$H$, all vertices in every part~$A_i$ have distinct colors, and
        \item\label{itm:not_t-plane_k-colorable} if $t \nequiv 0 \mod k$, then $\varphi_t(H) > k$.
    \end{enumerate}
\end{proposition}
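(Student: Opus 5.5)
The plan is to exploit the very rigid structure of the complete $(k+1)$-partite graph $H$. Every vertex $v \in A_i$ is adjacent to exactly the $kt$ vertices outside its own part, so $\Delta(H) = kt$ and $H$ is $kt$-regular. For a color class $c$ and a part $A_j$, write $n_c(A_j)$ for the number of vertices of color $c$ in $A_j$, and $S_c = \sum_j n_c(A_j)$ for the total number of vertices of color $c$. For $v \in A_i$, the number of neighbors of $v$ with color $c$ is then exactly $S_c - n_c(A_i)$, independently of which vertex of $A_i$ we pick. All three items follow by counting these quantities.

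For \ref{itm:k-plane_t-colorable}, the lower bound $\varphi_k(H) \geq \Delta(H)/k = t$ is \cref{obs:frugal_lower_bound}. For the upper bound, color each part $A_i$ bijectively with the colors $1, \dots, t$. Then every vertex $v \in A_i$ sees each color exactly once in each of the $k$ other parts, that is, exactly $k$ times. Hence this coloring is $k$-frugal.

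For \ref{itm:coloring_all_distinct_colors}, fix a $k$-frugal $t$-coloring. A vertex $v \in A_i$ has $kt$ neighbors distributed among $t$ colors, with at most $k$ of each color. Therefore every color appears exactly $k$ times in $N(v)$, i.e.\ $S_c - n_c(A_i) = k$ for every color $c$ and every part $A_i$ (parts are nonempty since $t \geq 1$). Consequently $n_c(A_i) = S_c - k$ does not depend on $i$; call it $n_c$. Then $S_c = (k+1)n_c$, and plugging this into $S_c - n_c = k$ gives $k n_c = k$, so $n_c = 1$. Thus every color appears exactly once in each part, which means the $t$ vertices of each part receive distinct colors.

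For \ref{itm:not_t-plane_k-colorable}, suppose for contradiction that $H$ has a $t$-frugal $k$-coloring. The same counting applies with the roles swapped. Each vertex has $kt$ neighbors in $k$ colors, with at most $t$ of each, so every color appears exactly $t$ times in every neighborhood. This gives $S_c - n_c(A_i) = t$ for all $i$ and $c$. Again $n_c(A_i) = n_c$ is independent of $i$, and $S_c = (k+1)n_c$ yields $k n_c = t$. This contradicts $t \not\equiv 0 \pmod k$, so $\varphi_t(H) > k$.

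The only delicate step is the uniformity argument showing that $n_c(A_i)$ is independent of $i$. It relies on the saturation observation that, since $kt$ neighbors must fit into $t$ (respectively $k$) colors with capacity $k$ (respectively $t$), every color is used at full capacity in every neighborhood. Once that is in place, everything else is immediate.
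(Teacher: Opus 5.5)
Your proof is correct. Part (i) is the paper's argument. For (ii) and (iii) you take a different and more uniform route.

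The paper proves (iii) by applying saturation only at the vertices of $A_{k+1}$, which shows that each color occurs exactly $t$ times on $A_1\cup\dots\cup A_k$. It then picks, for each color $c$, a part among $A_1,\dots,A_k$ that contains at most $\lfloor t/k\rfloor$ vertices of color $c$. From this it infers that at most $\lfloor t/k\rfloor$ vertices of $A_{k+1}$ can receive $c$, and reaches the contradiction $t\le k\lfloor t/k\rfloor$.

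For (ii) the paper argues by contradiction. Suppose $c$ occurs twice in some $A_i$. Comparing the $c$-counts in the neighborhood of a vertex of $A_j$ with those in the neighborhood of a vertex of a $c$-free part $A_\ell$ shows that $c$ is absent from every part other than $A_i$. Then a vertex of $A_i$ sees only $t-1$ colors among its $kt$ neighbors, which contradicts $k$-frugality.

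Your approach applies saturation at \emph{every} vertex simultaneously. This forces $n_c(A_i)$ to be independent of $i$, and both items collapse to the linear equations $kn_c=k$ and $kn_c=t$. That avoids the case analysis and the floor estimates.

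It also proves more than the statement asks. Every $k$-frugal $t$-coloring uses each color exactly once per part, so the colorings exhibited in (i) are the only ones. Moreover, when $k\mid t$, coloring each part with every color exactly $t/k$ times gives a valid coloring. Hence $H$ admits a $t$-frugal $k$-coloring if and only if $k\mid t$.
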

\begin{proof}
    To prove (\ref{itm:k-plane_t-colorable}), observe that coloring for every color~$c \in [t]$ exactly one vertex of each part~$A_i$ in~$c$, yields a $k$-frugal $t$-coloring, that is~$\varphi_k(H) \leq t$.
    See \cref{fig:t-k_k-t_frugal_coloring} for an illustration.
    As $\Delta(H) = kt$, $\varphi_k(H) = t$ follows with \cref{obs:frugal_lower_bound}.

    We now show $\varphi_t(H) > k$ if $t \nequiv 0 \mod k$, which corresponds to (\ref{itm:not_t-plane_k-colorable}).
    Suppose there exists a $t$-frugal $k$-coloring~$\Phi_t$ of~$H$.
    As all the vertices in $A \coloneqq A_1 \cup \dots \cup A_k$ lie in the neighborhood of each vertex in~$A_{k+1}$, each color appears exactly $t$ times on~$A$.
    In particular, there exists for every color~$c \in [k]$ a part~$B_c \coloneqq A_i$ with~$i \in [k]$ such that $\abs{A_i \cap \Phi^{-1}(c)} \leq \left\lfloor \frac{t}{k} \right\rfloor$.
    That is, color~$c$ appears at least $t-\left\lfloor \frac{t}{k} \right\rfloor$ times on~$N(v) \cap A$ for each vertex~$v \in B_c$.
    In particular, at most $\left\lfloor{\frac{t}{k}}\right\rfloor$ vertices of~$A_{k+1}$ can be colored in~$c$.
    As there are only $k$~colors in total, at most $k \cdot \left\lfloor{\frac{t}{k}}\right\rfloor$ of the $t$~vertices of~$A_{k+1}$ can be colored accordingly.
    Yet, if $t \nequiv 0 \mod k$, the number~$k$ does not divide~$t$, a contradiction.

    It remains to prove (\ref{itm:coloring_all_distinct_colors}).
    Consider a $k$-frugal $t$-coloring~$\Phi$ of~$H$.
    We need to show for all~$i$ that $\Phi(u_i) \neq \Phi(v_i)$ for all $u_i,v_i \in A_i$ with $u_i \neq v_i$ for each part~$A_i$.

    Suppose some color~$c$ appears twice on a part~$A_i$.
    We now show that color~$c$ does not appear on any other part~$A_j$.
    Consider a vertex~$v_j \in A_j$ for any~$j \neq i$.
    As~$\deg(v_j) = kt$, each color appears exactly $k$ times on~$N(v_j)$.
    In particular, we have $\abs{\Phi^{-1}(c) \cap N(v_j)} = k$.
    As color~$c$ appears twice on~$A_i$, there exists a part~$A_{\ell}$ with $\ell \neq j$ where color~$c$ does not appear.
    Now consider a vertex~$v_{\ell} \in A_{\ell}$.
    As $\deg(v_{\ell}) = kt$ and $\abs{\Phi^{-1}(c) \cap A_{\ell}} = 0$, we have
    \begin{align*}
    k &= \abs{\Phi^{-1}(c) \cap N(v_{\ell})} = \sum_{h \neq \ell} \abs{\Phi^{-1}(c) \cap A_h} =  \sum_{h \in [k+1] } \abs{\Phi^{-1}(c) \cap A_h}\\
    &= \abs{\Phi^{-1}(c) \cap A_j} + \sum_{h \neq j} \abs{\Phi^{-1}(c) \cap A_h} \geq \abs{\Phi^{-1}(c) \cap N(v_j)} = k,
    \end{align*}
    We thus obtain $\abs{\Phi^{-1}(c) \cap A_j} = 0$.

    Color~$c$ does not appear on any part~$A_j$ with~$j \neq i$, i.e., the neighborhood of each vertex~$v_i \in A_i$ uses $(t-1)$ colors.
    Yet, $\deg(v_i) = kt$.
    A contradiction to~$\Phi$ being $k$-frugal.
\end{proof}

There are drawings whose conflict graph corresponds to the $t$-blow-up of~$K_{k+1}$:
For each~$i \in [k+1]$, consider a matching represented by a drawing~$\Gamma$ of~$t$ parallel segments~$S_i$ in the plane such that each~$s_i \in S_i$ intersects all segments~$s_j \in S_j$ with $i \neq j$, see also \cref{fig:t-k_k-t_drawing}.
\begin{observation}
\label{obs:drawing_with_t-blowup-conflict-graph}
    For every~$k,t \in \N$, there exists a drawing~$\Gamma$ whose conflict graph is the $t$-blow-up of the complete graph~$K_{k+1}$.
\end{observation}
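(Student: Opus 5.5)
The statement to prove is \cref{obs:drawing_with_t-blowup-conflict-graph}: for all $k,t$ there is a drawing whose conflict graph is the $t$-blow-up of $K_{k+1}$. I will build it explicitly from straight-line segments, each segment being one edge of a matching whose endpoints are its two ends. Concretely, choose $k+1$ pairwise distinct directions, e.g.\ angles $\theta_i = i\pi/(k+1)$ for $i \in [k+1]$. For each $i$, the class $S_i$ will consist of $t$ parallel segments in direction $\theta_i$, obtained as small parallel translates of one long segment through the origin: offsets $\varepsilon_{i,1},\dots,\varepsilon_{i,t}$ perpendicular to direction $\theta_i$, all distinct and of absolute value less than some small $\varepsilon$. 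Each segment has length $2L$ and is centered at its offset point.

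The verification proceeds in three steps. First, segments within the same class $S_i$ are parallel and distinct, hence disjoint, so there are no edges inside a part. Second, two segments from different classes lie on non-parallel lines, which meet in exactly one point. This point lies within distance $O(\varepsilon/\sin(\pi/(k+1)))$ of the origin, so for $L$ much larger than that bound it is interior to both segments. Hence every pair from different classes crosses exactly once. Third, I must make the drawing satisfy the conventions of \cref{sec:prelim}: crossings must be at interior points, no three edges may pass through a common point, endpoints must be distinct, and there must be no touchings. Endpoints are distinct and lie far from all crossings once $L$ is large. Adjacent edges do not exist because the graph is a matching, and lenses cannot occur since straight segments cross at most once.

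The only real obstacle is genericity: avoiding three segments through a common crossing point. I will handle it by choosing the offsets generically. For fixed directions, a triple concurrence imposes a single nontrivial linear equation on the offsets, so only finitely many hyperplanes need to be avoided, and a generic choice works. With that, the conflict graph has vertex set $\bigcup_i S_i$ and edges exactly between different classes, i.e.\ it is the $t$-blow-up of $K_{k+1}$.
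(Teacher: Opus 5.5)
Your proposal is correct and follows the paper's construction: $k+1$ classes of $t$ pairwise parallel segments, each crossing every segment of the other classes. You additionally supply explicit coordinates and the genericity argument against triple concurrences, which the paper leaves implicit, and your version is straight-line, which is the form the paper later relies on for its color generators.
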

This completes the proof of \cref{prop:k_t_t_k_construction}.
We make use of such drawings to lift the $\NP$-completeness result for recognition of $t$-colorable $1$-plane drawings and $2$-colorable $2$-plane drawings (cf. \cref{thm:recognition_k_1} and \cref{thm:recognition_2_2}) to $t$-colorable $k$-plane drawings (cf. \cref{thm:recognition_collected}, see \cref{sec:app:t_to_t_plus_one} for details).
In particular Property~\eqref{itm:coloring_all_distinct_colors} of~\cref{prop:blow_up_properties} will come in handy in this proof.

\section{Recognizing \texorpdfstring{$\bm{t}$}{t}-colorable \texorpdfstring{$\bm{k}$}{k}-plane drawings}\label{sec:recognition}
Here we consider the recognition problem of $t$-colorable $k$-plane drawings, which we assume to be given as a planarization of the drawing.
For $t=1$, it suffices to verify that no edge is crossed more than $k$ times.
For $t=2$ and $k=1$, the problem still lies in $\P$.

\begin{theorem}\label{thm:recognition_2_1}
	Given a $2$-plane drawing~$\Gamma$ of a graph~$G$ on $n$~vertices and $m$~edges, we can compute in time~$\mathcal{O}(n+m)$ a $1$-plane $2$-coloring~$\varphi\colon E(G) \to [2]$, or conclude that none exists.
\end{theorem}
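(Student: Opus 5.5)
By \cref{obs:correspondence_frugal_conflict_graph}, the task is to decide whether the conflict graph~$H$ of~$\Gamma$ admits a $1$-frugal $2$-vertex-coloring, and to find one if it does. Since $\Gamma$ is $2$-plane, $\Delta(H) \leq 2$, so $H$ is a disjoint union of paths and cycles. I first build~$H$ from the planarization: each crossing vertex of~$\Lambda(\Gamma)$ gives one edge of~$H$ between the two edges of~$\Gamma$ crossing there. Because $\Gamma$ is $2$-plane, there are $\mathcal{O}(m)$ crossings, so this takes $\mathcal{O}(n+m)$ time. Components can be handled independently, since the frugality condition only involves the neighborhood of a single vertex.

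The key reformulation is this. A $2$-coloring is $1$-frugal iff every vertex of degree~$2$ has its two neighbors colored differently. Vertices of degree~$\leq 1$ impose no constraint. Define an auxiliary graph~$H'$ on~$V(H)$: for each vertex~$v$ with $N(v)=\set{u,w}$, add the edge~$uw$. Then $\Phi$ is $1$-frugal iff it is a proper $2$-coloring of~$H'$. So we need to test bipartiteness of~$H'$, and $H'$ has at most $\abs{V(H)} = m$ edges.

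The structure of~$H'$ follows component by component.
\begin{itemize}
\item For a path $x_1 x_2 \dots x_\ell$ in~$H$, the edges of~$H'$ are $x_{i-1}x_{i+1}$. This gives two vertex-disjoint paths (odd and even indices), which are always $2$-colorable. Concretely, color $x_i$ by the pattern blue, blue, orange, orange, repeating.
\item For a cycle of length~$s$ in~$H$, $H'$ is a single cycle of length~$s$ when $s$ is odd, and two cycles of length~$s/2$ when $s$ is even.
\end{itemize}
Hence a cycle component admits a valid coloring iff $s \equiv 0 \bmod 4$, consistent with \cref{obs:2-plane_inner_cell}. Note that in a drawing, a $2$-cycle in~$H$ would require two edges to cross twice. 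If such multiplicities do occur, the same rule still applies: $s=2$ forces a neighbor to be repeated, and the criterion $s \equiv 0 \bmod 4$ still decides the case correctly.

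The algorithm is then:
\begin{enumerate}
\item Compute~$H$ from the planarization.
\item Find its components by BFS.
\item For each path component, color it with the pattern blue, blue, orange, orange.
\item For each cycle component, check whether its length is divisible by~$4$. If so, apply the same pattern around the cycle. If not, report that no $1$-plane $2$-coloring exists.
\end{enumerate}
Every step is linear in $\abs{V(H)}+\abs{E(H)} = \mathcal{O}(m)$, plus $\mathcal{O}(n+m)$ to read~$\Lambda(\Gamma)$.

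The only real care needed is the correctness argument for the cycle case, namely that $s \not\equiv 0 \bmod 4$ is an obstruction. This is short: $H'$ then contains an odd cycle, either the whole $s$-cycle when $s$ is odd, or a cycle of length $s/2$, which is odd when $s \equiv 2 \bmod 4$. An odd cycle cannot be properly $2$-colored, so no $1$-frugal $2$-coloring exists.
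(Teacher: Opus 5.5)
Your proof is correct and follows the paper's argument. The conflict graph of a $2$-plane drawing is a disjoint union of paths and cycles, paths are colored with the blue, blue, orange, orange pattern, and a cycle is colorable if and only if its length is divisible by~$4$. The one addition is your auxiliary graph~$H'$, which turns $1$-frugality into bipartiteness and so gives a self-contained proof of the cycle criterion that the paper simply takes from \cref{obs:2-plane_inner_cell}.
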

\begin{proof}
	The conflict graph of the $m$ edges of $\Gamma$ consists of cycles and paths.
    All vertices of a path colored with the pattern orange, orange, blue, blue have a blue and a orange neighbor.
	Cycles admit such a coloring if and only if they are of size $s \equiv 0 \mod 4$ (cf. \cref{obs:2-plane_inner_cell}).
	This can be checked and (if applicable), such a coloring can be computed in $O(n+m)$ time.
\end{proof}

All other settings turn out to be \NP-complete, even for straight-line drawings.
In the following sections we show the reduction for $3$-colorable $1$-plane drawings (\cref{sec:app:gadgets_3_1}) for which we first establish a more abstract framework problem (\cref{subsec:framework}).
The proof for the remaining settings can be found in the appendix.

\subsection{A framework problem as a base for the reductions}\label{subsec:framework}

\newcommand{\conn}{connector\xspace}
\newcommand{\conns}{connectors\xspace}
\newcommand{\Conn}{Connector\xspace}
\newcommand{\Conns}{Connectors\xspace}
\newcommand{\mathC}{\ensuremath{C}\xspace}
\newcommand{\mathc}{\ensuremath{c}\xspace}

To show that, for all remaining values of $t$ and $k$, the problem is \NP-complete, we will use reductions from an intermediate problem called \textsc{Consistent Segment $t$-coloring Problem} (\tcsc).
An instance $I$ of \tcsc is an integer $t$, a set of non-crossing segments $S$ and a set of \conns \mathC.
A \conn \emph{connects} a list of segments and imposes constraints on their relative colorings.
There are 5 types of \conns.

\begin{enumerate}
    \item The \emph{equality} \conn (connected segments: $s_1, s_2$) enforces $\varphi(s_1) = \varphi(s_2)$.
    \item The \emph{inequality} \conn (connected segments: $s_1, s_2$) enforces $\varphi(s_1) \not= \varphi(s_2)$.
    \item The \emph{splitter} \conn (connected segments: $s_1, s_2, s_3$) enforces $\varphi(s_1) = \varphi(s_2) = \varphi(s_3)$, i.e., it ``splits $s_1$ into $s_2$ and $s_3$''.
    \item The \emph{crossing} \conn (connected segments: $s_1, s_1', s_2, s_2'$) enforces $\varphi(s_1) = \varphi(s_2)$ and $\varphi(s_1') = \varphi(s_2')$. All other pairs, e.g., $\varphi(s_1)$ and $\varphi(s_1')$, are independent.
    We say that $s_1$ is connected to $s_2$ but neither to $s_1'$ nor $s_2'$ (intuitively the signals $s_1,s_2$ and $s_1',s_2'$ cross).
    \item The \emph{not-all-equal} \conn (connected segments: $s_1, s_2, s_3$ plus $s_i$ for $i\in\{4, \ldots, t\}$) prohibits $\varphi(s_1) = \varphi(s_2) = \varphi(s_3)$, i.e., three specific segments cannot have the same color. Any other segment $s_i$ (if present) with $i\in\{4, \ldots, t\}$ is required to have color $i$.
\end{enumerate}

Then $I$ is a yes-instance if there exists a $t$-coloring $\varphi\colon S \rightarrow [t]$ of the segments~$S$ consistent with the restrictions imposed by the \conns.

    \subparagraph*{Overview of proving \texorpdfstring{$\bm{\NP}$}{NP}-hardness for \texorpdfstring{\boldtcsc}{t-CSC}.}
    To show that \tcsc is $\NP$-hard, we reduce from the $\NP$-complete problem $\naetsat$ \cite[p.\,217]{schaefer1978complexity}.
    An instance of $\naetsat$ is a monotone 3SAT-formula $\phi$ in conjunctive normal form with variables~$V = \set{v_1, \dots, v_n}$ and clauses~$C$.
    Each clause consists of three positive literals of variables in $V$ (there are no negative literals).
    It is a yes-instance if and only if there is a truth-assignment of the variables, such that every clause contains at least one variable set to true and one to false.
    The \emph{incidence graph} of~$\phi$ is a graph on the vertex set $V \cup C$ with edges~$vc$ for every variable~$v \in V$ that is contained in a clause~$c \in C$.
    The incidence graph does not need to be planar.\footnote{
    In fact, while $\naetsat$ is $\NP$-hard, it is solvable in polynomial time if the incidence graph is planar~\cite{moret1988planar}.}

    We will use the incidence graph of the monotone 3SAT-formula $\phi$ of an instance of $\naetsat$ as the basis of our construction of a \tcsc instance.
    Specifically we will first present a number of adaptions of and augmentations to this graph and then replace most vertices with segments and most edges by \conns.
    The resulting $\tcsc$ instance admits a valid $t$-coloring of its segments if and only if the $\naetsat$-instance is satisfiable.
    The idea is the following: for each variable~$v_i$ of~$\phi$, there is a set of segments $T_i^{\lor}$ which are connected via splitter \conns and therefore are forced to have the same color.
    There are additional $t-2$ sets $T^w_1, \ldots, T^w_{t-2}$ of segments.
    Within one set the segments are again connected via splitter \conns and have the same color.
    With inequality connectors we ensure that no segments in two sets $T^w_i, T^w_j$ with $i\not=j$ have the same color.
    Further all segments in any $T_i^{\lor}$ are forced to have colors different from any segment in any $T^w_i$ (again using inequality connectors).
    Thus, segments of any $T_i^{\lor}$ can only be colored in one of the two remaining colors, say red or blue.
    If all segments~$T_i^{\lor}$ are blue, the variable~$v_i$ is set to true, if they are red, the variable is false.
    Each clause~$v_i \vee v_j \vee v_k$ is represented by a not-all-equal connector that connects segments of the different sets~$T_{\ell}^{\lor}$ for $\ell \in \set{i,j,k}$.
    We represent crossings in the drawing of the incidence graph with crossing connectors.

    For the details of how we adapt the incidence graph of the $\naetsat$-instance, then build a \tcsc instance based on the adapted graph (an example is also shown in \cref{fig:schematic_construction}) and for the proofs that the construction exhibits all required properties which lead up to the following theorem, we refer the reader to \cref{sec:app:framework_details}.

\begin{restatable}[\omitted]{theorem}{tcscHardness}
\label{thm:hardness_tcsc}
    The \tcsc problem is \NP-complete for any $t\geq 2$.
\end{restatable}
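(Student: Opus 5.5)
Membership in $\NP$ is immediate: a certificate is a coloring $\varphi\colon S \to [t]$, and each connector imposes a constant-size (or, for not-all-equal, $O(t)$-size) constraint, so all constraints are checked in time polynomial in $\abs{S}+\abs{C}+t$. For hardness I will follow the construction sketched in the overview and reduce from $\naetsat$. First I fix a drawing of the incidence graph of $\phi$ in the plane with polynomially many crossings, each crossing involving exactly two edges. I then replace every variable vertex $v_i$ by a tree of segments $T_i^{\lor}$ joined by splitter connectors, with one branch per occurrence of $v_i$. Each edge $v_ic$ becomes a chain of segments joined by equality connectors, and each drawing crossing becomes a crossing connector on the two chains. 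Each clause $c=v_i\lor v_j\lor v_k$ becomes a not-all-equal connector whose first three segments are the ends of the three chains. Because the segments themselves are pairwise non-crossing and all interaction goes through connectors, the planar layout only matters for bookkeeping.

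To reduce the palette to two colors, I add $t-2$ further splitter-trees $T^w_1,\dots,T^w_{t-2}$. Pairwise inequality connectors between representatives of distinct $T^w_\ell$ force these trees onto $t-2$ distinct colors. By symmetry of the colors I may rename these colors $3,\dots,t$, so that $T^w_\ell$ has color $\ell+2$. Inequality connectors between every $T_i^{\lor}$ (or every segment on its chains) and a fresh branch of each $T^w_\ell$ then leave only colors $\{1,2\}$ for variable signals. The extra slots $s_4,\dots,s_t$ of each not-all-equal connector are attached to branches of the corresponding $T^w$ trees. To make the index $i$ match the forced color $i$, I label the trees so that the tree feeding slot $i$ is the one carrying color $i$; this is consistent because the renaming above is applied once, globally. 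For $t=2$ there are no $T^w$ trees and no extra slots.

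Correctness then runs in both directions. Given a NAE-satisfying assignment, color $T_i^{\lor}$ and all its chains blue (color $1$) if $v_i$ is true and red (color $2$) otherwise, and color $T^w_\ell$ with $\ell+2$. I then check each connector type. Equality, splitter and crossing connectors hold because signals propagate unchanged along chains. Inequality connectors hold because $\{1,2\}$ is disjoint from $\{3,\dots,t\}$. Not-all-equal connectors hold by the NAE property. Conversely, given a valid coloring, the inequalities pin $T^w$ to $t-2$ distinct colors, and after a permutation of $[t]$ these are $3,\dots,t$. The equality, splitter and crossing connectors then propagate each variable color unchanged to its clause connectors, so each variable gets a value in $\{1,2\}$. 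Reading color $1$ as true, the not-all-equal connectors give an NAE-satisfying assignment.

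I expect the main obstacle to be making the crossing connector faithful. Its two signal pairs must be genuinely independent, and the global numbering must guarantee that each not-all-equal slot $s_i$ is routed to the tree whose color is exactly $i$, not merely some fixed color. I will handle the latter with the global color permutation argument above. For the former, I will check explicitly that the only constraints the crossing connector imposes are $\varphi(s_1)=\varphi(s_2)$ and $\varphi(s_1')=\varphi(s_2')$, so every crossing in the incidence-graph drawing can be replaced without adding hidden constraints. The overall construction is polynomial because the drawing has $O(\abs{E}^2)$ crossings.
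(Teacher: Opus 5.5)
Your reduction has the same ingredients as the paper's: variable splitter-trees, $t-2$ waste trees with pairwise inequalities, inequalities from each variable to each waste tree, not-all-equal connectors whose extra slots are fed by waste trees, and crossing connectors at the crossings of a drawing. Using one tree per variable instead of the paper's pair $T_i^{\land},T_i^{\lor}$ joined by an equality connector is an inessential difference. Read as a pure constraint-satisfaction problem, your $\NP$-membership argument and both directions are sound. Your handling of the fixed colours $4,\dots,t$ of the not-all-equal slots is more careful than the paper's. In the backward direction these slots actually pin the waste trees that feed them to colours $4,\dots,t$. Your permutation there is harmless, since afterwards you only use permutation-invariant properties.

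The gap is your claim that the layout ``only matters for bookkeeping''. If that were so, the crossing connector would be pointless, since it is just two equality connectors. It exists because $t$-CSC instances are meant to carry a plane layout in which every connector is local. Each connector can then be replaced by a gadget drawing without unintended crossings, as in \cref{thm:recognition_3_1}.

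You respect this only for the incidence graph. The waste trees, their pairwise inequalities (a $K_{t-2}$ after contracting trees), the inequalities from every variable to every waste tree (a $K_{|V|,t-2}$), and the attachments of clause slots to waste trees are all added off the layout. So the connector structure is in general non-planar, and none of the resulting crossings are represented by crossing connectors.

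There is also a degree problem. Hanging the inequalities on a node of $T_i^{\lor}$ or on ``every segment on its chains'', as you describe, can place a segment in three or more connectors. The later independence arguments (\cref{lem:independence_3_1} and the proof of \cref{thm:recognition_2_2}) need each segment to lie in at most two.

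The paper avoids both problems. It adds the waste trees, dedicated leaves on the variable trees, and all joining edges to the incidence graph \emph{before} drawing it, keeping every former leaf of degree at most $2$. Only then does it take a rectilinear drawing and subdivide at every crossing and bend. Thus every crossing, including those of waste-tree edges, becomes a crossing connector, and every segment is in at most two connectors. With that change your proof goes through.
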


\begin{figure}[tbp]
    \centering
    \subcaptionbox{\label{fig:gadget_equality}}{\includegraphics[page=46]{figures/gadgets.pdf}}
    \hfill
    \subcaptionbox{\label{fig:gadget_equality_colored}}{\includegraphics[page=47]{figures/gadgets.pdf}}
    \hfill
    \subcaptionbox{\label{fig:gadget_splitter}}{\includegraphics[page=52]{figures/gadgets.pdf}}
    \subcaptionbox{\label{fig:gadget_splitter_colored}}{\includegraphics[page=53]{figures/gadgets.pdf}}
    \hfill
    \subcaptionbox{\label{fig:gadget_crossing}}{\includegraphics[page=49]{figures/gadgets.pdf}}
    \hfill
    \subcaptionbox{\label{fig:gadget_crossing_1}}{\includegraphics[page=50]{figures/gadgets.pdf}}
    \subcaptionbox{\label{fig:gadget_crossing_2}}{\includegraphics[page=51]{figures/gadgets.pdf}}
    \hfill
    \subcaptionbox{\label{fig:gadget_inequality}}{\includegraphics[page=58]{figures/gadgets.pdf}}
    \hfill
    \subcaptionbox{\label{fig:gadget_clause}}{\includegraphics[page=54]{figures/gadgets.pdf}}
    \subcaptionbox{\label{fig:gadget_clause_unsat}}{\includegraphics[page=55]{figures/gadgets.pdf}}
    \hfill
    \subcaptionbox{\label{fig:gadget_clause_2_1}}{\includegraphics[page=56]{figures/gadgets.pdf}}
    \hfill
    \subcaptionbox{\label{fig:gadget_clause_1_1_1}}{\includegraphics[page=57]{figures/gadgets.pdf}}
    \caption{$3$-colorable $1$-plane drawings for the equality (a)-(b), splitter (c)-(d), crossing (e)-(g), inequality (h), and not-all-equal (i)-(l) \conns, together with edge-colorings.
    A coloring marked with A is assumed.
    Arrows show when the coloring of one edge forces another one.
    Exact geometry is not crucial, only the intersection pattern.
    The red dashed edge in (j) cannot be colored validly.}
    \label{fig:gadgets}
\end{figure}

When reducing \tcsc to recognizing $t$-colorable $k$-plane drawings, segments are simply represented as edges in a drawing and it is sufficient to provide five drawings which mimic the coloring restrictions of the \conns in a $k$-plane $t$-edge-coloring.
We present the \conns for $t=3, k=1$ in the next section.

\subsection{Gadget correctness for the \texorpdfstring{$\bm{3}$}{3}-colorable \texorpdfstring{$\bm{1}$}{1}-plane setting}\label{sec:gadgets_3_1}

\newcommand{\gad}{gadget\xspace}
\newcommand{\gads}{gadgets\xspace}

In this section we present sets of segments, which contain special segments that -- if interpreted as the edges of a $3$-colorable $1$-plane drawing -- permit only very specific relative colorings.
These sets are called \gads.
Any \gad corresponds to a \conn~$C$ and contains a number of special segments which are identical to the segments that are connected by~$C$.

All gadgets are shown in \cref{fig:gadgets} and described in \cref{sec:app:gadgets_3_1} with a proof of their correctness.
Here, we only consider the equality \gad (\cref{fig:gadget_equality}) in more detail.
The equality \gad connects two segments $s_i$ and $s_o$.
It consists of two horizontal segments~$s^h_1, s^h_2$ and two vertical segments~$s^v_1, s^v_2$.
The two horizontal segments intersect the two vertical segments,
$s^v_1$ intersects $s_i$ and $s^v_2$ intersects $s_o$.
\begin{restatable}{lemma}{gadgetEquality}\label{lem:equality_3_1}
    The segments $s_i$ and $s_o$ have the same color in every $1$-plane $3$-coloring of the equality \gad.
\end{restatable}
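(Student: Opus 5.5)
The plan is to reduce the lemma to a local counting argument on the conflict graph of the gadget, using \cref{obs:correspondence_frugal_conflict_graph}. I will read off the crossings from the description of the equality \gad (\cref{fig:gadget_equality}): $s^h_1$ and $s^h_2$ each cross both $s^v_1$ and $s^v_2$, $s^v_1$ additionally crosses $s_i$, and $s^v_2$ additionally crosses $s_o$. I assume, as the construction intends, that the internal segments $s^v_1, s^v_2$ have no further crossings. Then the relevant part of the conflict graph is a $4$-cycle $s^v_1 s^h_1 s^v_2 s^h_2$ with a pendant vertex $s_i$ attached to $s^v_1$ and a pendant vertex $s_o$ attached to $s^v_2$.

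The key step is the following degree observation. The vertical segment $s^v_1$ is crossed by exactly three segments, namely $s^h_1$, $s^h_2$ and $s_i$. In a $1$-plane $3$-coloring, each color may appear at most once among these three crossing partners, so $s^h_1$, $s^h_2$ and $s_i$ receive three pairwise distinct colors. In particular $\varphi(s^h_1)\neq\varphi(s^h_2)$, and $\varphi(s_i)$ is the unique color of $[3]\setminus\{\varphi(s^h_1),\varphi(s^h_2)\}$. The same argument applied to $s^v_2$, which is crossed by exactly $s^h_1$, $s^h_2$ and $s_o$, shows that $\varphi(s_o)$ is also the unique color of $[3]\setminus\{\varphi(s^h_1),\varphi(s^h_2)\}$. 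Hence $\varphi(s_i)=\varphi(s_o)$.

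The argument uses only the constraints at the two vertical segments, so additional crossings of $s_i$ or $s_o$ with segments of other gadgets do not affect it. The only point needing care, and the main (mild) obstacle, is justifying from the figure that $s^v_1$ and $s^v_2$ are crossed by exactly those three segments. If they had a further crossing, the pigeonhole step would break, since with four partners the colors of $s^h_1, s^h_2, s_i$ would no longer be forced to be distinct.

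For completeness, and to show the gadget does not over-constrain, I will also exhibit a valid coloring. Fix any colors $a\neq b$ for $s^h_1,s^h_2$ and give $s_i,s_o$ the third color $c$. Color $s^v_1$ and $s^v_2$ with the same color $a$: each horizontal segment is then crossed twice by color $a$, which violates $1$-planarity, so this choice fails. Instead color $s^v_1$ with $a$ and $s^v_2$ with $b$. Then each horizontal segment sees one $a$ and one $b$, each vertical segment sees $a$, $b$, $c$ once each, and $s_i$ and $s_o$ each see a single crossing from the gadget. This matches \cref{fig:gadget_equality_colored}.
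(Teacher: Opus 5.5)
Your argument is correct and is essentially the paper's proof. The three segments crossing $s^v_1$ (resp.\ $s^v_2$) must receive pairwise distinct colors, so $s_i$ and $s_o$ both take the unique color not used on $s^h_1,s^h_2$.

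One small correction to your side remark: the hypothesis that $s^v_1,s^v_2$ have no further crossings is not needed. In a $1$-plane coloring \emph{all} crossing partners of an edge have pairwise distinct colors, so extra crossings could only make the gadget uncolorable. They would never weaken the distinctness of $s^h_1,s^h_2,s_i$.
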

\begin{proof}
    Let~$\varphi$ be a $1$-plane $3$-edge-coloring of the equality \gad.
    As $s_1^h$, $s_2^h$ and $s_i$ (respectively~$s_o$) all intersect $s_1^v$ (respectively~$s_2^v$), all three edges have distinct colors.
    That is, $\varphi(s_i), \varphi(s_o) \neq \varphi(s_j^h)$ for $j \in [2]$ and $\varphi(s_1^h) \neq \varphi(s_2^h)$.
    Hence, as only three colors are available $\varphi(s_i) = \varphi(s_o)$ follows.
\end{proof}

\begin{figure}[tbp]
    \savebox{\imagebox}{\includegraphics[page=48,scale=1.5]{figures/gadgets.pdf}}
    \centering
    \begin{subfigure}[b]{0.4\textwidth}
        \centering
        \raisebox{\dimexpr.5\ht\imagebox-.5\height}{\includegraphics[page=46,scale=1.5]{figures/gadgets.pdf}}
        \subcaption{}
        \label{fig:gadget_equality_restate}
    \end{subfigure}
    \qquad
    \begin{subfigure}[b]{0.4\textwidth}
        \centering
        \usebox{\imagebox}
        \subcaption{}
        \label{fig:gadget_corner}
    \end{subfigure}
    \caption{Two different drawings which both realize the equality \gad connecting~$s_i$, $s_o$.}
    \label{fig:gadget_corner_complete}
\end{figure}

The segments~$s_i$ and~$s_o$ need not both be horizontal, see \cref{fig:gadget_corner_complete}.
In fact, any drawing of the six segments~$s_i,s_o,s_1^h,s_2^h,s_1^v,s_2^v$ with the same intersection pattern as represented in \cref{fig:gadget_equality_restate} realizes the equality \gad.

Every \gad in isolation has a $1$-plane $3$-coloring, yet a segment connected to two \gads might pose unwanted coloring constraints.
Indeed, any segment can only be crossed once by each color.
This makes the colorings of these two \gads not entirely independent.
However, this obstruction can be circumvented:

\begin{lemma}\label{lem:independence_3_1}
    Let $g_1, g_2$ be two equality gadgets connecting segments $s_1$ to $s_2$ and $s_2$ to $s_3$, respectively. Let $s_a$ and $s_b$ be two segments intersecting $s_1$ and $s_2$, respectively.
    For any combination of $\phi(s_a)$ and $\phi(s_b)$ there is a valid $1$-planar $3$-coloring of $s_1, s_2, s_3$ as well as the segments in $g_1$ and $g_2$.
\end{lemma}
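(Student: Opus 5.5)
We give an explicit coloring, built from the structure of the equality gadget described before \cref{lem:equality_3_1}. In $g_1$, segment $s_1$ is crossed by a vertical segment $s^v_1$, which is crossed by the two horizontal segments $s^h_1,s^h_2$. These are also crossed by $s^v_2$, which crosses $s_2$. Gadget $g_2$ has the same structure from $s_2$ to $s_3$, with segments $t^v_1,t^h_1,t^h_2,t^v_2$.

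We record which crossings involve $s_1,s_2,s_3$. Segment $s_1$ is crossed by $s_a$ and $s^v_1$. Segment $s_2$ is crossed by $s_b$, $s^v_2$ and $t^v_1$. Segment $s_3$ is crossed by $t^v_2$. The $1$-plane constraints to check are therefore: each segment sees each color at most once among the segments crossing it. For $s_2$ this means $\phi(s_b)$, $\phi(s^v_2)$ and $\phi(t^v_1)$ are pairwise distinct. For $s_1$ it means $\phi(s_a)\neq\phi(s^v_1)$.

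Let $c=\phi(s_1)=\phi(s_2)=\phi(s_3)$; by \cref{lem:equality_3_1} this common color is forced. Inside $g_1$, the horizontals get the two colors different from $c$, so $\{\phi(s^h_1),\phi(s^h_2)\}=[3]\setminus\{c\}$. Then $s^v_1$ is crossed by $s_1,s^h_1,s^h_2$, which carry all three colors, so it is fine with any color. The same holds for $s^v_2$. The horizontals are each crossed only by $s^v_1$ and $s^v_2$, so they only need $\phi(s^v_1)\neq\phi(s^v_2)$. The same analysis applies to $g_2$ with $t^v_1\neq t^v_2$. Hence the free parameters are $c$ and the four vertical colors.

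Given $\phi(s_a)$ and $\phi(s_b)$, we choose:
\begin{itemize}
\item $c$ arbitrarily, say $c=1$;
\item $\phi(s^v_2)$ and $\phi(t^v_1)$ as the two colors in $[3]\setminus\{\phi(s_b)\}$, in either order, so that the three segments crossing $s_2$ are pairwise distinct;
\item $\phi(s^v_1)$ as any color different from both $\phi(s_a)$ and $\phi(s^v_2)$, which is possible since there are three colors;
\item $\phi(t^v_2)$ as any color different from $\phi(t^v_1)$.
\end{itemize}
Segment $s_3$ then has a single crosser $t^v_2$, so it imposes nothing further.

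Whether $s_1$ also crosses something else does not matter, since the lemma only asks for a valid coloring of the listed segments. If $\phi(s_a)$ or $\phi(s_b)$ happens to equal $c$, this is harmless because $s_a$ and $s_b$ are not crossed by the gadget segments. We also check that $s_a$ and $s_b$ themselves see at most one crossing with each of $s_1,s_2$, which is a single segment of color $c$, so there is no conflict.

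The main obstacle is bookkeeping: ensuring that we have the correct intersection pattern and that no gadget segment crosses $s_a$ or $s_b$. Once the crossing lists are fixed, the greedy choice above works because every constraint excludes at most two colors out of three.
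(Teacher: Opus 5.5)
Your proof is correct, but it takes a different route from the paper. The paper fixes the signal color, splits into four cases according to how $\phi(s_a)$ and $\phi(s_b)$ relate to each other and to $\phi(s_1)$, and exhibits a coloring for each case in \cref{fig:independence_3_1}.

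You instead analyze the constraint structure. Once the horizontals of each gadget take the two colors other than $c$ (which is in fact forced), every vertical segment is automatically satisfied. All remaining requirements then reduce to a few inequalities among the four vertical colors and $\phi(s_a),\phi(s_b)$. Each of these excludes at most two of the three colors, so a greedy choice succeeds.

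What your route buys:
\begin{itemize}
\item It is figure-free and shows \emph{why} the lemma holds.
\item Your choice of vertical colors never uses the value of $c$, so it proves what the independence claim actually needs: every prescribed signal color $c$ extends. It would be cleaner to say ``for any $c$'' rather than ``choose $c=1$''.
\item It transfers immediately to other attachment patterns. For instance, if $s_b$ crossed $s_3$ instead of $s_2$, the constraints would form the path $\phi(s_a)\neq\phi(s^v_1)\neq\phi(s^v_2)\neq\phi(t^v_1)\neq\phi(t^v_2)\neq\phi(s_b)$, which is again greedily $3$-colorable.
\end{itemize}
The paper's version is shorter, but it relies on the reader checking the pictures and trusting that the case split is complete.

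One small correction: your remark that it ``does not matter'' whether $s_1$ crosses further segments is not right. Validity at $s_1$ depends on all of its crossers. What you need, and what the lemma implicitly assumes, is that $s_a$ and $s^v_1$ are the only segments crossing $s_1$, and likewise that $s_2$ and $s_3$ have no crossings beyond those you listed.
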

\begin{proof}
    Since $g_1, g_2$ are equality gadgets by \cref{lem:equality_3_1} we may assume that $\phi(s_1) = \phi(s_2) = \phi(s_3) = \text{yellow}$.
    It is sufficient to prove the following four cases: (i) $\phi(s_a) = \phi(s_b) = \phi(s_1)$, (ii) $\phi(s_a) \not = \phi(s_b) = \phi(s_1)$, (iii) $\phi(s_a) = \phi(s_b) \not= \phi(s_1)$ and (iv) $\phi(s_a) \not = \phi(s_b) \not= \phi(s_1)$.
    Colorings for all four cases are given in  \cref{fig:independence_3_1}.
\end{proof}

Replacing every segment that is connected to two \gads by three segments connected via two equality \gads (see \cref{fig:independence_3_1}) yields independence.
\begin{figure}
    \centering
    \begin{subfigure}[t]{.24\linewidth}
        \centering
        \includegraphics[page=59,scale=1.1]{figures/gadgets.pdf}
        \subcaption{Case (i)}
    \end{subfigure}
    \hfill
    \begin{subfigure}[t]{.24\linewidth}
        \centering
        \includegraphics[page=60,scale=1.1]{figures/gadgets.pdf}
        \subcaption{Case (ii)}
    \end{subfigure}
    \hfill
    \begin{subfigure}[t]{.24\linewidth}
        \centering
        \includegraphics[page=61,scale=1.1]{figures/gadgets.pdf}
        \subcaption{Case (iii)}
    \end{subfigure}
    \hfill
    \begin{subfigure}[t]{.24\linewidth}
        \centering
        \includegraphics[page=62,scale=1.1]{figures/gadgets.pdf}
        \subcaption{Case (iv)}
    \end{subfigure}
    \caption{Placing two equality \gads in a row makes the valid colorings of $s_a$ independent of the coloring of $s_b$.}
    \label{fig:independence_3_1}
\end{figure}
As the (now independent) gadgets mimic the behavior of the connectors, we obtain $\NP$-completeness.
\begin{theorem}\label{thm:recognition_3_1}
    Deciding whether a given straight-line drawing is $3$-colorable $1$-plane is \NP-complete.
\end{theorem}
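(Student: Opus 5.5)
Membership in $\NP$ is immediate: a $3$-edge-coloring is a certificate of linear size. Given the planarization, we check for every edge and each of the three colors that at most one crossing edge has that color, in time linear in the number of crossings. For hardness we reduce from $3$-CSC, which is $\NP$-hard by \cref{thm:hardness_tcsc} with $t=3$. Given an instance $(S,\mathC)$, we build a straight-line drawing $\Gamma$ that contains every segment of $S$ as an edge. Each \conn is replaced by a copy of the corresponding gadget from \cref{fig:gadgets}, whose special segments are identified with the connected segments.

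The correctness argument has two directions. First, each gadget must force its \conn's constraint in every $1$-plane $3$-coloring: \cref{lem:equality_3_1} handles the equality gadget, and the others are analogous case analyses (deferred to the appendix). In the not-all-equal gadget the extra segments $s_i$ with $i\ge 4$ do not occur, since $t=3$. Second, any coloring of $S$ consistent with the \conns must extend to the gadget edges, and different gadgets must not interfere. To achieve this, we first preprocess the instance so that every segment of $S$ is attached to at most two gadgets. We then subdivide every segment attached to two gadgets into three segments joined by two equality gadgets, as in \cref{fig:independence_3_1}. \Cref{lem:independence_3_1} then shows that whatever colors the crossing edges $s_a,s_b$ coming from the two gadgets carry, the middle part can be completed. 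Hence every segment of $S$ sees at most one crossing from each gadget, and the extensions chosen inside different gadgets are independent. Equivalences follow: a valid coloring of $S$ yields a $1$-plane $3$-coloring of $\Gamma$ gadget by gadget, and conversely, restricting a $1$-plane coloring of $\Gamma$ to $S$ satisfies all \conns.

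The main obstacle is geometric realization. The segments and gadgets must be placed as straight lines in polynomial-size coordinates, and no unintended crossings may occur beyond the gadget-internal ones. Otherwise extra crossings could add constraints or block valid colorings. I would take the layout of the \tcsc instance from the construction of \cref{thm:hardness_tcsc} (a planar-like drawing of the augmented incidence graph, with its crossings already replaced by crossing \conns), embedded on a grid. Segments become polylines of constant bend number, and each bend is realized by a corner equality gadget as in \cref{fig:gadget_corner}. Only the intersection pattern of a gadget matters, so each gadget can be drawn inside a small disjoint box around its location. Gadgets are thus pairwise separated and only interact through the shared segments, and all coordinates stay polynomial in the instance size, which gives a polynomial-time reduction.
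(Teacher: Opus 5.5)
Your proposal is correct and follows the same route as the paper. $\NP$-membership is shown by verifying a given $3$-edge-coloring, and hardness by reducing from $3$-CSC: each connector is replaced by its gadget from \cref{fig:gadgets}, and gadgets sharing a segment are decoupled by the double-equality buffer of \cref{lem:independence_3_1}. Your extra paragraph on straight-line placement (taking the layout from the instances built for \cref{thm:hardness_tcsc} and drawing each gadget in its own small box) only spells out what the paper leaves implicit when it says that only the intersection pattern matters.
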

\begin{proof}
    Clearly, it can be checked in polynomial time whether each edge of a drawing is crossed at most once by each color class of a given $3$-edge-coloring.
    We now prove $\NP$-hardness via a reduction from $\tcsc$ for $t=3$.
    Given an instance of \tcsc with $t=3$ we replace every gadget with the corresponding gadget represented in \cref{fig:gadgets} yielding a drawing $\Gamma$ (represented as a set of segments).
    An example of a \tcsc instance and the resulting drawing is shown in \cref{fig:3c1p_hardness_construction}.
    As each gadget behaves as required by the corresponding connector in every $1$-plane $3$-edge-coloring and the \gads can be colored independently, the \tcsc instance has a consistent coloring if and only if $\Gamma$ is $3$-colorable $1$-plane.
    This yields \NP-hardness.
\end{proof}

 \begin{figure}
        \centering
        \subcaptionbox{\label{fig:schematic_construction}}{\includegraphics[width=\linewidth,page=35]{figures/gadgets.pdf}}
        \subcaptionbox{\label{fig:full_construction}}{\includegraphics[width=\linewidth,page=34]{figures/gadgets.pdf}}
        \caption{(a) A \tcsc instance~$I$  with $\phi = (v_1 \lor v_2 \lor v_3) \land (v_1 \lor v_3 \lor v_4)$. The trees~$T_i^w$ are represented in black. Segments corresponding to the same variable of the $\naetsat$-instance have the same color. (b) The $3$-colorable $1$-plane drawing of~$I$. The coloring of the segments corresponds to a $1$-plane $3$-edge-coloring, the coloring of areas to the color-coding of the gadget's pictograms.
        }
        \label{fig:3c1p_hardness_construction}
    \end{figure}

\subsection{Extending the reduction to the remaining settings}
The gadgets represented in \cref{fig:gadgets} can be generalized to show the \NP-hardness of recognizing $t$-colorable $1$-plane drawings;  through a different set of gadgets we can prove a similar result for $2$-colorable $2$-plane drawings.
By providing a procedure that creates a drawing which is $t$-colorable $(k+1)$-plane if and only if a given base drawing is $t$-colorable $k$-plane, these results extend to all remaining cases.
In fact, this is similar to the $\NP$-completeness result of Bard, MacGillivray and Redlin  who characterize pairs~$t,k \in \N$ of integers for which recognizing graphs which admit \emph{proper} $k$-frugal $t$-vertex-colorings is $\NP$-hard \cite{bardComplexityFrugalColouring2021}.
They lift the $\NP$-completeness result of recognizing graphs with proper $k$-frugal $t$-colorings to proper $(k+1)$-frugal $t$-vertex-colorings: given a graph~$H$ they consider the graph~$H'$ obtained from~$H$ by adding for each vertex~$v \in V(H)$ a copy of~$K_{t}$ and identifying $v$ with a vertex of its copy~$K_{t}$.
They observe that~$H$ admits a proper $k$-frugal $t$-vertex-coloring if and only if $H'$ admits a proper $(k+1)$-frugal $t$-vertex-coloring.
In our context, $t$-blowups of the complete graph serve a similar role (cf. \cref{prop:blow_up_properties}).
The details can be found in \cref{sec:app:hardness}.

\begin{restatable}[\omitted]{theorem}{recognitionCollected}
\label{thm:recognition_collected}
    Deciding whether a given straight-line drawing is $t$-colorable $k$-plane is \NP-complete if $t=2, k\geq 2$ or $t\geq 3, k\geq 1$.
\end{restatable}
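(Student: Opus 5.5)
Membership in $\NP$ is immediate: a $t$-edge-coloring is a certificate, and checking that every edge is crossed at most $k$ times by each color takes polynomial time on the given planarization. For hardness I would split the argument into base cases plus a lifting step from $k$ to $k+1$ with $t$ fixed. The base cases are $t\geq 3,k=1$ (\cref{thm:recognition_k_1}, generalizing the gadgets of \cref{fig:gadgets}) and $t=2,k=2$ (\cref{thm:recognition_2_2}, with its own gadget set). Both reduce from \tcsc (\cref{thm:hardness_tcsc}) in the same way as \cref{thm:recognition_3_1}. For general $t$ and $k=1$, I would add to each gadget extra segments realizing the fixed colors $4,\dots,t$: for example, a crossing $K_{t}$-like bundle forcing $t-3$ further colors onto every relevant segment, so that only three colors remain free and the $3$-colorable argument applies.

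The lifting step is the core. Given a straight-line drawing $\Gamma$, I construct $\Gamma'$ as follows. For each edge $e$ of $\Gamma$, take a copy $B_e$ of the drawing from \cref{obs:drawing_with_t-blowup-conflict-graph} for the $t$-blow-up of $K_{k+1}$, placed in a small region away from all other crossings. Then let $e$ replace one segment of $B_e$, or extend $e$ so that it crosses exactly the $kt$ segments of the other $k$ parts, just as one member of a part $A_1$ would. In any $(k+1)$-frugal coloring we want every edge of $\Gamma$ to receive exactly one additional crossing of each color from its gadget. To obtain this, I would use a gadget whose conflict graph is the $t$-blow-up of $K_{k+2}$ and attach $e$ as an extra vertex of part $A_1$ that is adjacent to exactly one vertex of a new part, or in some similar arrangement.

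The cleanest version I would attempt is this. Take $H_e$ to be the $t$-blow-up of $K_{k+2}$ with $e$ playing one vertex $v\in A_1$. Every other vertex of $H_e$ has degree $(k+1)t$, all inside the gadget. By \cref{prop:blow_up_properties}\eqref{itm:coloring_all_distinct_colors}, applied with $k+1$ in place of $k$, each part receives all $t$ colors bijectively. Consequently $v$ sees each color exactly $k+1$ times inside the gadget, which is too many once $v$ also has its original crossings in $\Gamma$. So I would delete one part, $A_{k+2}$, and instead let $v$ cross a single transversal: $t$ segments, one per color. The rigidity argument must then show that these $t$ segments are forced to carry distinct colors. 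I would prove this by making them a part of a smaller blow-up that is saturated: every one of its vertices has degree exactly $(k+1)t$ counting $v$'s replacement.

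Combining the two directions gives the equivalence. If $\Gamma$ is $t$-colorable $k$-plane, extend the coloring by coloring each gadget bijectively per part; then $e$ gains exactly one crossing per color. Conversely, the rigidity forces each original edge to gain exactly one crossing per color, so restricting to $\Gamma$ yields a $k$-plane coloring. Induction from the base cases then covers $t=2,k\geq2$ and $t\geq3,k\geq1$. I expect the main obstacle to be the gadget design that attaches to an arbitrary edge $e$ while preserving the rigidity of \cref{prop:blow_up_properties}\eqref{itm:coloring_all_distinct_colors} despite $e$'s external crossings. This requires a careful counting argument redone for the modified conflict graph, together with a straight-line realization that places the gadget locally along $e$.
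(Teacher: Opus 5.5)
Your membership argument, your base cases ($t\geq 3,k=1$ from \cref{thm:recognition_k_1} and $t=k=2$ from \cref{thm:recognition_2_2}), and the induction on $k$ with $t$ fixed all match the paper. The gap is the lifting step, which you yourself flag as the main obstacle. No gadget is actually fixed, and the variants you sketch do not do what you need.

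Whenever $e$ occupies a full vertex slot of a blow-up, it crosses every vertex of every other part. So it receives $kt$ gadget crossings (blow-up of $K_{k+1}$) or $(k+1)t$ (blow-up of $K_{k+2}$, or $K_{k+1}$ plus a transversal). It must receive exactly $t$, one per color, to leave room for its up to $k$ original crossings per color. Your later claim that $e$ gains exactly one crossing per color therefore does not hold for the construction as described.

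Moreover, after deleting $A_{k+2}$ the other vertices have degree $kt<(k+1)t$, so \cref{prop:blow_up_properties}\eqref{itm:coloring_all_distinct_colors} gives no rigidity under $(k+1)$-frugality. The ``saturated smaller blow-up counting $v$'s replacement'' is also not well defined. If $e$ takes a slot but crosses only the transversal, the vertices of the remaining parts lose a neighbor and are no longer saturated, and saturation is exactly what the counting in \cref{prop:blow_up_properties} uses. What is missing is a gadget with $t$ segments that receive pairwise distinct colors in every $(k+1)$-plane $t$-coloring, still have room for the crossing with $e$, and come with a proof of this rigidity.

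The paper obtains this from the generalized equality gadget (\cref{lem:generalized_equality}). There $s_a$ crosses $s_1$ and $kt-1$ common segments, and $s_b$ crosses $s_2$ and the same segments. Saturation of $s_a,s_b$ then forces $\varphi(s_1)=\varphi(s_2)$, while $s_1,s_2$ each receive only one extra crossing. This decouples color forcing from the crossing budget. The segments of one part of a saturated blow-up are cut into two collinear pieces joined by such a gadget. A second gadget copies each color onto a fresh segment crossed only once (\cref{lem:generator_low_degree}), and $e$ crosses these $t$ segments (\cref{lem:k_to_k_plus_one}).

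If you take this route, re-derive the rigidity after cutting, because the pieces satisfy weaker frugality constraints than the original segment. For $k+1=2$ an arbitrary cut does not suffice: e.g.\ $t=3$ with a $3/3$ cut admits colorings in which the copied segments repeat a color. Cutting off a piece with a single crossing keeps the counting valid.

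Alternatively, your own idea can be repaired directly. In the $t$-blow-up of $K_{k+2}$, let $e$ occupy a slot of $A_2$ but cross only $A_1$; this is easy to realize with straight segments locally along $e$. Let $n_h(c)$ be the number of vertices of $A_h$ with color $c$, counting $e$ in $A_2$.
\begin{itemize}
\item Saturation at $A_1$ and at $A_2\setminus\{e\}$ gives $n_2(c)+\sum_{j\geq 3}n_j(c)=k+1=n_1(c)+\sum_{j\geq 3}n_j(c)$.
\item Frugality at each $A_j$ with $j\geq 3$ then gives $n_j(c)\geq n_1(c)-[c=\varphi(e)]$.
\item Summing over the $k$ parts $A_3,\dots,A_{k+2}$ yields $(k+1)\,n_1(c)\leq k+1+k\,[c=\varphi(e)]$, so $n_1(c)\leq 1$.
\end{itemize}
Since $\abs{A_1}=t$, this gives $n_1(c)=1$ for every color $c$, so $e$ receives exactly one gadget crossing of each color.
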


\section{Future Work}
\label{sec:conclusion}
Our research gives rise to structural questions with regard to the difference between fixed drawings and graphs, as well as complexity theoretic questions.

Every $t$-colorable $k$-plane drawing is in particular $tk$-plane.
However, there are $4$-plane drawings (cf. \cref{thm:tk-plane_not_t-col_k-plane}) and $3$-plane drawings (see \cref{fig:deg_3_no_2_frugal_2-coloring}) which are not $2$-colorable $2$-plane.
Yet, the corresponding graphs might admit different $3$- or $4$-plane drawings that are $2$-colorable $2$-plane.
This raises the following question.
\begin{question}\noindent
\label{question:k-planar_2-col_2-planar}
Is there a $4$- or $3$-planar graph which is not $2$-colorable $2$-planar?
\end{question}

While we constructed a $2$-colorable $1$-plane drawing of $n$-vertex graphs~$G$ that match the upper bound on the number of crossings, the graphs~$G$ might still admit a different $2$-colorable $1$-plane drawings with fewer crossings.
\begin{question}
    Are there $n$-vertex graphs where every $2$-colorable $1$-plane drawing contains $2.\overline{6}(n-2)$ crossings?
\end{question}

We only provided bounds on the number of crossings of $2$-colorable $1$-plane drawings.
\begin{question}
    What is the crossing density of $3$-colorable $1$-plane drawings?
\end{question}

In this work, we considered a colored variant of $k$-planarity.
Yet, the same questions on edge and crossing density, as well as recognition complexity can also be applied to different notions of beyond planarity, such as fan-planar or quasi-planar graphs.
In fact, for the uncolored variants many edge density results can be obtained using the density formula \cite{kaufmann2023density}.
Can the density formula be generalized to colored drawings?

While we considered in this work a coloring variant where different color classes interact (that is, each edge may only be crossed by up to $k$~edges of \emph{each} color), it would also be of interest to study a thickness variant: what is the minimum number~$t$ for a graph~$G$ such that~$G$ is the union of $t$ graphs, each of which is $k$-planar?

\bibliography{bib2doi}
\clearpage

\appendix

\section{Remaining hardness proofs for recognizing \texorpdfstring{\bm{$t$}}{t}-colorable \texorpdfstring{\bm{$k$}}{k}-plane drawings}\label{sec:app:hardness}

This section contains the missing proofs (\NP-completeness of \tcsc and correctness of the $3$-colorable $1$-plane gadgets) and extended results (\NP-completeness of all remaining settings) that were omitted from \cref{sec:recognition} due to space restrictions.

\subsection{Complete \texorpdfstring{\bm{\NP}}{NP}-completeness of \titletcsc}\label{sec:app:framework_details}

Here we give the ommitted detail of the hardness proof in \cref{subsec:framework}.

\textbf{Adapating the incidence graph.}
    The initial incidence graph and the adaptions described in this section are illustrated in Figure~\ref{fig:framework_adaptions}.
    We replace every variable vertex of a variable $v_i$ with two rooted binary trees $T_i^\lor$ (which has $t-2$ leaves) and $T_i^\land$ (has as many leaves as there are literals of $v_i$ in clauses).
    The two roots of $T_i^\lor$ and $T_i^\land$ are connected with an edge.
    Intuitively, the tree $T_i^\lor$ ensures that we can propagate the value of the variable $v_i$ to all of its literals, and the tree $T_i^\vee$ that no color other than red or blue is used on the segment corresponding to the variable $v_i$.

    We also add $t-2$ binary \emph{waste} trees (which might mean we add 0 of these trees in case $t=2$) each with $|V|+|C| + t - 2$ leaves.\footnote{Technically, one of these trees only requires $|V| + t - 2$ leaves, but this does not change the correctness of the reduction.}
    Each waste tree represents one color (other than red or blue) which may not be used on any of the segments corresponding to a variable~$v_i$.
    Later on, we need to ensure that only three colors are used on every clause: red, blue and one more color.
    To ensure all of the above, join leaves of the variable trees~$T_i^\vee$, leaves of waste trees and clause vertices such that
    \begin{itemize}
        \item each (former) leaf has degree at most~$2$,
        \item every pair of waste trees is joined by an edge,
        \item every variable tree~$T_i^\vee$ is joined with an edge to every waste tree,
        \item every clause is joined to all but one waste tree.
    \end{itemize}

    We can of course obtain a rectilinear drawing of the incidence graph in polynomial space, where all variable vertices are on a horizontal line at the bottom of the drawing and all clause vertices are on a vertical line on the left of the drawing.
    We now subdivide every edge of the drawing right before and after any crossing with another edge as well as just before and just after every bend.

    \begin{figure}
        \centering
        \includegraphics[page=8, width=\linewidth]{figures/framework_problem_2.pdf}
        \caption{A rectilinear drawings of the initial incidence graph (a). Edges involved in crossings and bends (b) are subdivided. Variable vertices are replaced by two connected trees $T^\land$ and $T^\lor$ (c) and waste trees $T^w$ are added and connected to the clause vertices and the $T^\land$ trees. }
        \label{fig:framework_adaptions}
    \end{figure}

    \textbf{Construction of the \boldtcsc instance.}
    Next we replace every vertex except the clause vertices with a short line segment.
    If the vertex was a subdivision vertex on a horizontal part of an edge or a vertex of a waste tree we replace it with a short vertical segment, all other vertices are replaced by short horizontal segments.

    The segments are now connected with \conns based on their edge connections.
    In all binary trees a parent and its two children are connected by a splitter \conn.
    The roots of the two trees $T^\land_i$ and $T^\lor_i$ for every $i\in[|V|]$ are connected by an equality \conn.
    The edges incident to the leaves of the $T^\land$ trees, which are not part of $T^\land$ are replaced by inequality \conns.
    Similarly any edge between two leaves of different waste trees is replaced by an inequality \conn.
    Two crossing edges are replaced by a crossing \conn, s.t., the two segments corresponding to the two endpoints of the edge are forced to have the same color, respectively (we refer the reader to \cref{subsec:framework}).
    The clause vertices are replaced by not-all-equal \conns connecting all neighbors of the clause vertex, such that the three segments of the vertices on the paths to the $T^\lor$ trees are $s_1$, $s_2$ and $s_3$, while all remaining segments correspond to vertices on the path to a waste tree $T^w$.

    All remaining edges connect vertices of degree 2 and are replaced by equality \conns which might connect two horizontal, two vertical or a horizontal and a vertical segment.

    \begin{lemma}\label{lem:framework_one_color_in_tree}
        All segments within a single $T^w$, $T^\land$ or $T^\lor$ have the same color.
    \end{lemma}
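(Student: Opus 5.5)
The claim follows from the connector semantics, so I will argue inside the \tcsc instance, using only the constraints that connectors impose on $\varphi$ and not any drawing. The plan is an induction over the structure of each tree. Here a tree means its segments: the former tree vertices, together with the subdivision vertices on tree edges.

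First I fix what the construction actually puts inside a single tree $T^w_j$, $T^\land_i$ or $T^\lor_i$. Every parent–children triple of the binary tree becomes a splitter \conn. Every remaining internal edge of the tree becomes an equality \conn: this covers edges between consecutive subdivision vertices, edges from a tree vertex to a subdivision vertex, and edges at bends. Where a tree edge was split at a crossing, the two subdivision segments on either side are joined by a crossing \conn. The statement of the crossing \conn pairs $s_1$ with $s_2$ and $s_1'$ with $s_2'$, and the construction assigns these so that the two segments on the same original edge form one of the enforced pairs. Inequality and not-all-equal \conns, in contrast, only occur on edges leaving a tree, namely at leaves of $T^\land$, between waste trees, and at clauses; so they impose no constraint internal to a tree.

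Next I define an auxiliary graph $A$ on the segments of one tree. Two segments are adjacent in $A$ if some equality \conn connects them, if some splitter \conn connects them (taken pairwise among $s_1,s_2,s_3$), or if they are an enforced pair of a crossing \conn. By the definitions of the connector types, every edge of $A$ forces equal colors. It then suffices to show that $A$ is connected.

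Connectivity of $A$ follows from the fact that the original tree is connected. The subdivision only replaces each edge $xy$ by a path $x, d_1, \dots, d_r, y$, and each consecutive pair on that path is linked in $A$. Consecutive pairs without an intervening crossing are linked by equality or splitter \conns; the pair straddling a crossing is linked by the enforced pair of the crossing \conn. Contracting each such path back to the edge $xy$ recovers the connected binary tree, so $A$ is connected. Hence all segments receive the same color under any consistent coloring $\varphi$.

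The main obstacle is the bookkeeping around crossing \conns. I must check that the segments before and after a crossing on the same tree edge are indeed assigned as a matched pair ($s_1,s_2$ or $s_1',s_2'$), and not as $s_1,s_1'$. I would verify this directly from the construction's sentence that the two segments of the same edge are forced to share a color. I would also confirm that the root equality \conn between $T^\land_i$ and $T^\lor_i$ is not needed for this lemma; it only merges the two trees afterwards.
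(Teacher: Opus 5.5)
Your proposal is correct and takes the same approach as the paper. The paper's one-line proof says the segments of each tree are linked by splitter or equality connectors, so any consistent coloring is constant on the tree. Your auxiliary-graph argument makes that connectivity explicit and also allows for crossing connectors on tree edges, a case the paper does not need to consider.
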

    \begin{proof}
        Since all segments within a single waste tree are connected via splitter or equality \conns the lemma follows from the restrictions imposed by these \conns.
    \end{proof}

    Similarly we also state the following two observation.
    \begin{observation}\label{obs:framework_one_color_connected_to_tree}
        All segments which are connected to a $T^w$, $T^\land$ or $T^\lor$ tree via a chain of equality or crossing \conns also have the same color as the root of the tree they are connected to.
    \end{observation}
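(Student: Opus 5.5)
The plan is to argue by induction along chains of connectors, reusing the constraints of the individual \conns exactly as \cref{lem:framework_one_color_in_tree} does for the trees themselves. Let $s$ be a segment that is joined to a tree $T \in \set{T^w, T^\land, T^\lor}$ by a chain $s = s^{(0)}, s^{(1)}, \dots, s^{(m)}$, where $s^{(m)}$ is a segment of $T$ and any two consecutive segments $s^{(j)}, s^{(j+1)}$ are connected by an equality \conn or a crossing \conn. I will show $\varphi(s^{(j)}) = \varphi(s^{(j+1)})$ for every $j$. Chaining these equalities gives $\varphi(s) = \varphi(s^{(m)})$, and \cref{lem:framework_one_color_in_tree} then gives $\varphi(s^{(m)}) = \varphi(r)$ for the root $r$ of $T$.

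The step for a single link splits into two cases. For an equality \conn, the definition directly enforces $\varphi(s^{(j)}) = \varphi(s^{(j+1)})$.

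For a crossing \conn, I need to check one point: consecutive segments in the chain must be \emph{connected} by the \conn in the sense defined in \cref{subsec:framework}. That is, they must form a pair $\set{s_1,s_2}$ or $\set{s_1',s_2'}$, not a pair like $s_1, s_1'$.

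This is the one place where the construction matters, so it is the main obstacle. The crossing \conns were built from two crossing edges of the subdivided incidence graph. Each crossing \conn identifies the two segments at the endpoints of the \emph{same} edge, the subdivision vertices immediately before and after the crossing. So following a path of the adapted incidence graph through a crossing always passes from $s_1$ to $s_2$, or from $s_1'$ to $s_2'$. Accordingly, I will define the chain as the segments along such a path, and then each crossing link does enforce equal colors.

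Finally, the root in question is reached through splitter \conns, which are handled by \cref{lem:framework_one_color_in_tree}. The observation therefore follows by transitivity.
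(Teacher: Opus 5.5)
Your proposal is correct and follows the paper's reasoning: the paper states this observation without proof, as following ``similarly'' to \cref{lem:framework_one_color_in_tree}. That reasoning is that each equality or crossing connector along the chain forces equal colors, and transitivity together with the tree lemma gives the root's color. The point you flag as the main obstacle is already settled by the paper's definition of the crossing connector, which declares $s_1$ connected only to $s_2$ and $s_1'$ only to $s_2'$, so you do not need to appeal to the construction.
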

    \begin{observation}
        The two roots of a $T^\land$ and $T^\lor$ trees, which are connected via an equality \conn have the same color.
    \end{observation}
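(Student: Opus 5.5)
The plan is to read the claim straight off the definition of the equality connector in the \tcsc instance. First I identify the two objects involved. By construction, for each variable $v_i$ the roots of $T^\land_i$ and $T^\lor_i$ are each replaced by a single short segment, say $r^\land_i$ and $r^\lor_i$. The edge joining the two roots in the adapted incidence graph is then replaced by exactly one equality connector with connected segments $s_1 = r^\land_i$ and $s_2 = r^\lor_i$.

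Next I fix an arbitrary valid coloring $\varphi\colon S \to [t]$ of the \tcsc instance, that is, a coloring consistent with all connectors. By the definition of the equality connector, consistency requires $\varphi(s_1) = \varphi(s_2)$, which is exactly $\varphi(r^\land_i) = \varphi(r^\lor_i)$. No propagation along chains of connectors and no appeal to \cref{lem:framework_one_color_in_tree} is needed, since the two roots are the endpoints of a single connector.

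The only point needing care is making sure the root-to-root edge really becomes an equality connector, and not an inequality, crossing or splitter connector. The construction reserves splitters for parent–child triples inside trees, and inequalities for leaf edges leaving $T^\land$ and for edges between waste trees. The root-to-root edge is explicitly stated to be replaced by an equality connector.

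If this edge was subdivided in the rectilinear drawing because of a bend or crossing, the two roots are instead joined by a chain of equality and crossing connectors. In that case I follow the chain, using at each crossing connector the part of its constraint that pairs the segments on the same original edge, and the equality again propagates. I do not expect any real obstacle here.
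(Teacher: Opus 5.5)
Your argument is correct and matches the paper, which states this as an immediate observation: the root segments of $T^\land_i$ and $T^\lor_i$ are joined by one equality connector, and its defining constraint $\varphi(s_1)=\varphi(s_2)$ is exactly the claim. Your extra case for a subdivided root-to-root edge is not needed, since the construction explicitly places the equality connector directly between the roots, but it is harmless because it just repeats the chain argument of \cref{obs:framework_one_color_connected_to_tree}.
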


    Next we show that the waste trees cannot share the same color.
    \begin{lemma}\label{lem:framework_waste_trees_different}
        The roots of all waste trees have pairwise distinct colors.
    \end{lemma}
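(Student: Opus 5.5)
By construction, every pair of waste trees is joined by an edge between two of their leaves. In the \tcsc instance that edge becomes an inequality \conn between the two leaf segments. The plan is to use this directly.

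Fix two distinct waste trees $T^w_a$ and $T^w_b$. Step one: use the adaptation of the incidence graph to obtain a leaf $\ell_a$ of $T^w_a$ and a leaf $\ell_b$ of $T^w_b$ joined by an edge. Step two: note that this edge was replaced by an inequality \conn, so $\varphi(\ell_a) \neq \varphi(\ell_b)$. Step three: apply \cref{lem:framework_one_color_in_tree}. All segments within a single waste tree share a color, so $\varphi(\ell_a)$ equals the color of the root of $T^w_a$, and likewise for $b$. Hence the two roots differ, and since the pair was arbitrary, the roots are pairwise distinct.

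The one point needing care is that subdivision happened before the segments were built. The edge between $\ell_a$ and $\ell_b$ may have been subdivided at crossings or bends, so it becomes a path whose inner vertices are subdivision vertices. Its pieces are then equality \conns and crossing \conns, plus the one inequality \conn prescribed for edges between leaves of different waste trees. I would resolve this as follows. Place the inequality \conn on one piece of the path, say the piece incident to $\ell_b$, and make all other pieces equality or crossing \conns. By \cref{obs:framework_one_color_connected_to_tree}, every segment on the path up to that inequality \conn carries the color of the root of $T^w_a$. The inequality \conn then separates this color from the color of $\ell_b$, which is the root color of $T^w_b$.

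I expect this bookkeeping about where the inequality \conn sits on a subdivided path to be the only real obstacle. The rest is immediate from the \conn semantics. The same argument will later show that each $T_i^\lor$ avoids every waste-tree color.
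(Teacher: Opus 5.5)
Your proof is correct and is essentially the paper's argument: for each pair of waste trees, an inequality connector joins two segments that are linked by chains of equality and crossing connectors to leaves of the two trees. \cref{obs:framework_one_color_connected_to_tree} together with \cref{lem:framework_one_color_in_tree} then transfers the inequality to the roots, and your bookkeeping about where the inequality connector sits on the subdivided path spells out what the paper's phrase ``connected via a chain of equality and crossing connectors'' leaves implicit.
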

    \begin{proof}
        Since for every pair of waste trees, there is an inequality gadget connecting two segments which are connected via a chain of equality and crossing \conns to leaves of these trees, the leaves have different colors by \cref{obs:framework_one_color_connected_to_tree}.
        By \cref{lem:framework_one_color_in_tree} the roots of two waste trees have different colors.
    \end{proof}

    Now we can show how the waste trees restrict the colors which can be used in the $T^\land$ trees.

    \begin{lemma}\label{lem:framework_only_two_colors}
        The root of every $T^\land$ and $T^\lor$ tree uses one of two colors.
    \end{lemma}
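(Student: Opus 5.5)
We argue by counting colors. Fix a consistent $t$-coloring~$\varphi$ of the \tcsc instance. By \cref{lem:framework_one_color_in_tree} every waste tree $T^w_1,\dots,T^w_{t-2}$ is monochromatic; call its color $w_j$. By \cref{lem:framework_waste_trees_different} the colors $w_1,\dots,w_{t-2}$ are pairwise distinct. So exactly $t-2$ of the $t$ colors are used as waste colors, and exactly two colors of $[t]$ remain, say red and blue. For $t=2$ there are no waste trees, and the statement is immediate since only two colors exist.

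Now fix a variable $v_i$. By the construction, the tree $T_i^\lor$ has $t-2$ leaves, and its incident non-tree edges are joined to every waste tree. Each such edge was replaced by an inequality \conn, possibly with a chain of equality and crossing \conns along the subdivided path. (The construction text sometimes says $T^\land$ where $T^\lor$ is meant; the trees joined to waste trees are the ones with $t-2$ leaves.)

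Consider the leaf of $T_i^\lor$ joined to $T^w_j$. By \cref{obs:framework_one_color_connected_to_tree}, the segment on the leaf's side of the inequality \conn has the color of the root of $T_i^\lor$. The segment on the other side has color $w_j$. The inequality \conn therefore forces the root color of $T_i^\lor$ to differ from $w_j$. Since this holds for every $j\in[t-2]$, the root of $T_i^\lor$ avoids all $t-2$ waste colors and must be red or blue.

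Finally, the roots of $T_i^\lor$ and $T_i^\land$ are joined by an equality \conn, so by the observation on roots they share a color. Hence the root of $T_i^\land$ is also red or blue.

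The main obstacle is a bookkeeping check: we must verify that the chain from each leaf to the inequality \conn consists only of equality and crossing \conns. Crossing \conns preserve color along the "through" pair only, so we must use the correct pairing; the construction guarantees this. We must also verify that each $T_i^\lor$ really meets all $t-2$ waste trees, which needs exactly its $t-2$ leaves of degree at most~$2$.
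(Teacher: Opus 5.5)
Your proposal is correct and follows the paper's route. The paper argues the same way: an inequality connector at each leaf, followed by a color-preserving chain of equality and crossing connectors to each waste tree, rules out all $t-2$ pairwise distinct waste colors, and the equality connector between the two roots carries this to the other variable tree. Your remark about the $T^\land$/$T^\lor$ naming inconsistency is apt; the paper's own proof is phrased for $T^\land$, but the logic is identical.
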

    \begin{proof}
        Since for every pair of waste tree and $T^\land$ tree there is a connection between a leaf of the $T^\land$ tree and a segment connected to a leaf of the waste tree via a chain of equality or crossing segments, the root of the waste tree and the root of the $T^\land$ tree cannot have the same color.
        By \cref{lem:framework_waste_trees_different} the waste tree roots have $t-2$ pairwise unique colors and no root of any $T^\land$ or $T^\lor$ tree has any of these $t-2$ colors.
    \end{proof}

    \begin{figure}
        \centering
        \subcaptionbox{\label{fig:framwework_construction_detail}}{\includegraphics[page=11]{figures/framework_problem_2.pdf}}
        \hfil
        \subcaptionbox{\label{fig:framwework_construction_segments}}{\includegraphics[page=10]{figures/framework_problem_2.pdf}}
        \caption{The \tcsc instance is created based on the graph structure. A small part of the graph (a) in \cref{fig:framework_adaptions}c (for simplicity, this graph contains only two waste trees) and the corresponding section of the constructed instance (b) with matching color highlighting. Segments are black lines (partially with white or gray filling). \Conns are represented by small icons in blue (equality), red (inequality), yellow (splitter), purple (crossing) or brown (not-all-equal).}
        \label{fig:framwework_construction}
    \end{figure}

    In the following we will refer to the remaining two colors as the \emph{true color} (or simply true) and the \emph{false color} (or simply false).

    Finally we establish an observation, which will guarantee the second property required by the not-all-equal \conn.

    \begin{observation}\label{obs:framework_t_minus_three_unique_at_clause}
        Every not-all-equal \conn is connected (in addition to the segments $s_1, s_2$ and $s_3$) to $t-3$ segments, which are colored in $t-3$ pairwise unique colors, none of which are the true or the false color.
    \end{observation}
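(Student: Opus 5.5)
We prove \cref{obs:framework_t_minus_three_unique_at_clause} by tracing what the construction attaches to each not-all-equal \conn and then applying the lemmas on waste trees just established. By construction, each clause vertex is adjacent to its three variable literals and to all but one of the $t-2$ waste trees, that is, to exactly $t-3$ waste trees. When a clause vertex is replaced by a not-all-equal \conn, the segments on the paths to the $T^\lor$ trees become $s_1,s_2,s_3$. The remaining $t-3$ connected segments are the neighbors on the paths to these $t-3$ distinct waste trees.

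The first step is to determine the color of each such extra segment. Such a segment lies on a path from a leaf of a waste tree $T^w_j$ to the clause. This path consists of the subdivided edge, which was split only at crossings and bends. Consecutive segments along it are therefore joined by equality or crossing \conns. By \cref{obs:framework_one_color_connected_to_tree} and \cref{lem:framework_one_color_in_tree}, each extra segment receives the color of the root of its waste tree $T^w_j$.

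Next, \cref{lem:framework_waste_trees_different} gives that the $t-3$ waste trees have pairwise distinct root colors. Hence the $t-3$ extra segments carry pairwise distinct colors.

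Finally, we exclude the true and false colors. By \cref{lem:framework_only_two_colors} and its proof, the $t-2$ waste roots use $t-2$ colors disjoint from the two colors available to the $T^\land$ and $T^\lor$ roots, namely true and false. So none of the extra segments is true or false.

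The only delicate point is that the chain from a waste leaf to the clause consists solely of equality or crossing \conns, with no splitter or inequality \conn on it. This holds because the subdivision vertices have degree $2$, and because inequality \conns were placed only on edges between leaves. An edge from a waste leaf to a clause is neither of these, so its subdivided path contains only equality and crossing \conns. Once this is checked, the observation follows immediately.
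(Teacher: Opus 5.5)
Your proposal is correct and follows essentially the same route as the paper: the $t-3$ extra segments are joined by chains of equality and crossing connectors to pairwise different waste trees, so \cref{obs:framework_one_color_connected_to_tree} and \cref{lem:framework_waste_trees_different} give them pairwise distinct colors. Your extra appeal to \cref{lem:framework_only_two_colors} to exclude true and false is fine, though it is immediate from the definition of true and false as the two colors not used by the waste-tree roots.
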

    \begin{proof}
        Since every not-all-equal \conn is connected to $t-3$ segments, which are connected via a chain of equality and crossing \conns to pairwise different waste trees, the lemma statement follows from \cref{lem:framework_waste_trees_different} and \cref{obs:framework_one_color_connected_to_tree}.
    \end{proof}

    We are now ready to prove \NP-hardness of \tcsc.

\tcscHardness*
\begin{proof}
    To prove hardness first assume that we are given a satisfying variable assignment for the given \naetsat instance.
    We color the roots of the trees $T^\land_i$ and $T^\lor_i$ and all segments connected via splitter and equality in the true/false color if $v_i$ is set to true/false in the satisfying variable assignment.
    If $t \geq 3$ we color the root segment of the waste tree which is not connected to any clause vertex (and all segments connected to them via splitter, equality or crossing \conns) in the third color.
    If $t \geq 4$ we color the roots of the remaining $t-3$ waste trees (and all segments connected to them via a chain of splitter, equality or crossing \conns) in the remaining $t-3$ colors.

    Then by construction the coloring of the segment is consistent with all equality, splitter and crossing gadgets.
    Since the waste tree roots have pairwise different colors, which are not the true or false color, the coloring is consistent with all inequality gadgets.
    Finally the three segments $s_x$, $s_y$ and $s_z$, which are forced to be colored differently by a not-all-equal gadget, are connected to three trees $T_x^\lor$, $T_y^\lor$ and $T_z^\lor$ and the \naetsat instance contains a clause $(v_x \lor v_y \lor v_z)$.
    Since not all variables have the same truth value in the satisfying assignment, $s_x$, $s_y$ and $s_z$ are not colored in the same color and the coloring is consistent with the not-all-equal gadget.

    Now assume that we are given a segment which is consistent with all gadgets.
    By \cref{lem:framework_only_two_colors} the roots of all $T^\lor$ trees are all colored in either the true or the false color.
    By a similar argument as in the previous paragraph for every clause in the \naetsat instance there are three segments $s_x$, $s_y$ and $s_z$, which are forced to be colored differently by a not-all-equal \conn and the root of the three $T^\lor$ trees these segments are connected to have the same color, respectively (again by \cref{lem:framework_only_two_colors}).
    We therefore simply set every variable $v_i$ to true/false if the root of $T_i^\lor$ is colored in the true/false color.

    Since the reduction creates at most a polynomial number of subdivision vertices, we can represent a solution to \tcsc in polynomial space as a simple list of colors, and we can check for every of the polynomially many gadgets in constant time if the coloring is consistent with the rules of the gadget.
    Then \NP-containment and consequently the theorem statement follows.
\end{proof}

\subsection{Correctness of the \texorpdfstring{\bm{$3$}}{3}-colorable \texorpdfstring{\bm{$1$}}{1}-plane gadgets}
\label{sec:app:gadgets_3_1}

Within this section we provide proofs for the correctness of the remaining gadgets represented in \cref{fig:gadgets}, namely the inequality, the splitter, the crossing and the not-all-equal \gads.

The simplest \gad is the inequality \gad (\cref{fig:gadget_inequality}) connecting two segments~$s_i$ and $s_o$.
It consists of a  single segment $s$ intersecting both two $s_i$ and $s_o$.
Thus, $s_i$ and~$s_o$ have distinct colors in every $1$-plane $3$-edge-coloring.
\begin{restatable}[\omitted]{lemma}{gadgetInequality}\label{lem:inequality_3_1}
    The segments $s_i$ and $s_o$ have different colors in every $1$-plane $3$-coloring of the inequality \gad.
\end{restatable}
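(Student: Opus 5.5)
The argument is a single pigeonhole step on the shared crossing segment, using only the definition of a $1$-plane $3$-edge-coloring. Let~$\varphi$ be any $1$-plane $3$-edge-coloring of the inequality \gad. The gadget consists of the single segment~$s$, which intersects both~$s_i$ and~$s_o$. Hence both~$s_i$ and~$s_o$ lie in the set of edges crossing~$s$.

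Now suppose, for contradiction, that $\varphi(s_i) = \varphi(s_o) = c$ for some color $c \in [3]$. Then~$s$ is crossed by two distinct edges of color~$c$. This contradicts the requirement that every edge is crossed by at most $k = 1$ edge of each color. Therefore $\varphi(s_i) \neq \varphi(s_o)$.

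In the language of \cref{obs:correspondence_frugal_conflict_graph}, $s_i$ and $s_o$ are two neighbors of~$s$ in the conflict graph, and a $1$-frugal coloring forbids two neighbors of a common vertex from sharing a color. Note that the color of~$s$ itself plays no role, and the argument works for any number~$t$ of colors.

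The only point needing care is that $s_i$ and $s_o$ are distinct segments that genuinely cross~$s$. This holds by the intersection pattern of the gadget in \cref{fig:gadget_inequality}.

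For completeness, the gadget should also admit some valid coloring: given $\varphi(s_i) \neq \varphi(s_o)$, color~$s$ with any color, since $s$ crosses exactly two edges of different colors. How this interacts with other crossings on~$s_i$ and~$s_o$ is handled by the independence construction (\cref{lem:independence_3_1}), not by this lemma.
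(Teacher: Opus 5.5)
Your proposal is correct and is the paper's own argument: the single segment $s$ crosses both $s_i$ and $s_o$, so in a $1$-plane coloring they cannot share a color. Your remarks that the argument works for any $t$, and that realizability is handled separately via the independence construction, are accurate additions.
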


Moreover we can easily extend the construction of the equality \gad by a third segment $s^v_3$, which intersects $s_1^h$ and $s_2^h$, and add a new segment $s_o'$, which intersects $s^v_3$ (\cref{fig:gadget_splitter}).
This construction yields a splitter \gad connecting $s_i, s_o$ and $s_o'$.
\begin{restatable}[\omitted]{lemma}{gadgetSplitter}\label{lem:splitter_3_1}
    The edges $s_i, s_o$ and $s_o'$ have the same color in every $1$-plane $3$-coloring of the splitter \gad.
\end{restatable}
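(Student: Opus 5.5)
The plan is to follow the same pattern as the proof of \cref{lem:equality_3_1}. The splitter \gad consists of the equality \gad together with an extra segment $s^v_3$ and an extra connected segment $s_o'$. The vertical segment $s^v_3$ intersects both horizontal segments $s^h_1, s^h_2$ as well as $s_o'$. So the splitter \gad contains two overlapping equality \gads as subdrawings. The first is formed by $s_i, s^h_1, s^h_2, s^v_1, s^v_2, s_o$. The second is formed by $s_i, s^h_1, s^h_2, s^v_1, s^v_3, s_o'$.

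Let $\varphi$ be a $1$-plane $3$-edge-coloring of the splitter \gad. Its restriction to any subdrawing is again a $1$-plane $3$-edge-coloring, since deleting edges only removes crossings. Applying \cref{lem:equality_3_1} to the first subdrawing gives $\varphi(s_i) = \varphi(s_o)$. Applying it to the second gives $\varphi(s_i) = \varphi(s_o')$. Together these yield $\varphi(s_i) = \varphi(s_o) = \varphi(s_o')$.

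To make the argument self-contained, I would also spell it out directly.
\begin{itemize}
\item $s^v_1$ is crossed by $s^h_1$, $s^h_2$ and $s_i$. In a $1$-plane coloring these three crossing edges must receive three distinct colors, so they use all of $[3]$.
\item $s^v_2$ is crossed by $s^h_1$, $s^h_2$ and $s_o$, so $\varphi(s_o)$ is the unique color distinct from $\varphi(s^h_1)$ and $\varphi(s^h_2)$.
\item By the same reasoning for $s^v_3$, $\varphi(s_o')$ is also that unique remaining color.
\item From the first item, $\varphi(s_i)$ is that same remaining color, so all three agree.
\end{itemize}

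The only point that needs care is the intersection pattern of the figure. I must make sure that $s^v_3$ crosses exactly $s^h_1$, $s^h_2$ and $s_o'$, with no further edges. Since all three crossings are needed to pin down $\varphi(s_o')$, a missing crossing would break the argument, while an extra crossing would not hurt this direction. Given the described construction, this is immediate.
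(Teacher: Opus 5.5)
Your proposal is correct and takes essentially the paper's approach: the paper likewise notes that pairs from $\{s_i, s_o, s_o'\}$ are joined by equality gadgets inside the splitter gadget and applies \cref{lem:equality_3_1}. Your remark that restricting a $1$-plane coloring to a subdrawing keeps it $1$-plane, and your direct unfolding via the three crossings on each $s^v_j$, make that one-line argument explicit.
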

\begin{proof}
For any pair of segments in $\{s_i, s_o, s_o'\}$ \cref{lem:equality_3_1} proves that they have the same color in any $1$-plane $3$-coloring.
\end{proof}

Based on the construction of the equality \gad (the segments $s^v_1$, $s^v_2$, $s^h_1$, and $s^h_2$) we can add four segments $s_i$, $s_o$, $s_i'$, and $s_o'$
intersecting $s^v_1$, $s^v_2$, $s^h_1$, and $s^h_2$, respectively.
This is a crossing \gad (\cref{fig:gadget_crossing_1} and \cref{fig:gadget_crossing_2}).

\begin{restatable}[\omitted]{lemma}{gadgetCrossing}\label{lem:crossing_3_1}
    The segments $s_i$ and $s_o$ have the same color in every $1$-plane $3$-coloring of the crossing \gad. The same holds for $s_i'$ and $s_o'$.
    The color of~$s_i$ and~$s_o$ is independent of the color of~$s_i'$ and~$s_o'$.
\end{restatable}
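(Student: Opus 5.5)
The crossing \gad consists of the equality \gad core $s^v_1,s^v_2,s^h_1,s^h_2$, where each $s^h_j$ crosses both $s^v_1$ and $s^v_2$. To it we add four segments: $s_i$ and $s_o$ cross $s^v_1$ and $s^v_2$ respectively, and $s_i'$ and $s_o'$ cross $s^h_1$ and $s^h_2$ respectively. I will prove the two equalities separately by the argument of \cref{lem:equality_3_1}, once for each orientation. Then I will prove independence by exhibiting explicit colorings.

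First equality: fix a $1$-plane $3$-coloring~$\varphi$. The segment $s^v_1$ is crossed by $s^h_1$, $s^h_2$ and $s_i$, so these three receive pairwise distinct colors. Likewise $s^v_2$ is crossed by $s^h_1$, $s^h_2$ and $s_o$, so these are pairwise distinct. Hence $\varphi(s^h_1)\neq\varphi(s^h_2)$, and both $s_i$ and $s_o$ must take the unique remaining third color, which gives $\varphi(s_i)=\varphi(s_o)$.

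Second equality: the argument is symmetric. $s^h_1$ is crossed by $s^v_1$, $s^v_2$ and $s_i'$, and $s^h_2$ is crossed by $s^v_1$, $s^v_2$ and $s_o'$. So $\varphi(s^v_1)\neq\varphi(s^v_2)$, and $s_i'$ and $s_o'$ both get the third color. Before applying this I must check from the figure that no further crossings exist; in particular, $s_i,s_o$ must not cross the horizontals and $s_i',s_o'$ must not cross the verticals.

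Independence is the step needing care, because the two pairs are not decoupled in an obvious way. If $s_i,s_o$ have color $a$, then $\{\varphi(s^h_1),\varphi(s^h_2)\}=[3]\setminus\{a\}$. If $s_i',s_o'$ have color $b$, then $\{\varphi(s^v_1),\varphi(s^v_2)\}=[3]\setminus\{b\}$. Every pair $(a,b)\in[3]^2$ must be realizable, so I will construct colorings in two cases.

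\emph{Case $a\neq b$.} Let $c$ be the third color. Color the horizontals $b,c$ and the verticals $a,c$.

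\emph{Case $a=b$.} Color the horizontals $x,y$ and the verticals $x,y$, where $\{x,y\}=[3]\setminus\{a\}$.

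In each case I check every segment's crossings. Each $s^v_j$ sees the two horizontals plus one of $s_i,s_o$, which are three distinct colors. Each $s^h_j$ sees the two verticals plus one of $s_i',s_o'$, also three distinct colors. The outer segments $s_i,s_o,s_i',s_o'$ each see only one crossing inside the gadget, so no color is repeated. This is exactly the configuration shown in \cref{fig:gadget_crossing_1,fig:gadget_crossing_2}.

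The remaining worry is that the outer segments are also crossed by other \gads. That is handled by the independence construction of \cref{lem:independence_3_1}, not here, so for this lemma it suffices that each outer segment is crossed once within the \gad.
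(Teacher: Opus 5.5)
Your proposal is correct and follows essentially the paper's proof. The paper gets both equalities by applying \cref{lem:equality_3_1} to each pair, which is exactly the argument you re-derive for each orientation; the extra segments only add crossings on the other two core segments, so the argument is unaffected. It shows independence by exhibiting colorings where the two pairs share a color or do not (\cref{fig:gadget_crossing_1,fig:gadget_crossing_2}), and your explicit assignments for $a=b$ and $a\neq b$ spell this out for every pair of colors.
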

\begin{proof}
By \cref{lem:equality_3_1} the two segments in the pair $(s_i, s_o)$ (and similarly in the pair $(s_i', s_o')$) have the same color in any $1$-plane $3$-coloring.
This color can be the same for the two pairs (\cref{fig:gadget_crossing_1}) or different (\cref{fig:gadget_crossing_2}).
\end{proof}

The not-all-equal \gad is structurally quite different (\cref{fig:gadget_clause}).
Since $t=3$, the clause \conn of a \tcsc instance constructed as in the previous section is only connecting three segments $s_1, s_2$, and $s_3$.
The construction consists of three pairwise intersecting segments $s_a, s_b$ and $s_c$.
Three more segments $s_a', s_b'$, and $s_c'$ are added intersecting $s_a, s_b$, and $s_c$, respectively.
Additionally $s_a', s_b'$, and $s_c'$ intersect $s_1, s_2$, and $s_3$.

\begin{restatable}[\omitted]{lemma}{gadgetNAE}\label{lem:nae_3_1}
    There exists no $1$-plane $3$-coloring of the not-all-equal \gad which colors $s_1, s_2$ and $s_3$ with the same color.
    Moreover every other coloring of $s_1, s_2$ and $s_3$ can be extended to a $1$-plane $3$-coloring of the not-all-equal \gad.
\end{restatable}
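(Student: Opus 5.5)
The not-all-equal gadget is small: $s_a,s_b,s_c$ form a triangle in the conflict graph, $s_a',s_b',s_c'$ are pendant to $s_a,s_b,s_c$ respectively, and $s_1,s_2,s_3$ are pendant to $s_a',s_b',s_c'$. I will argue directly with the $1$-plane condition at the segments of degree two: $s_a$ is crossed by $s_b,s_c,s_a'$, and $s_a'$ is crossed by $s_a$ and $s_1$. The analogous statements hold for $b$ and $c$. I assume, as in the independence discussion after \cref{lem:independence_3_1}, that $s_1,s_2,s_3$ have no further crossings inside the gadget. External crossings are handled by the equality-gadget buffering already described.

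\textbf{First part.} At $s_a'$, the condition says $\varphi(s_a)\neq\varphi(s_1)$. At $s_a$, the segments $s_b,s_c,s_a'$ must receive three distinct colors, so $\{\varphi(s_b),\varphi(s_c),\varphi(s_a')\}=[3]$. Symmetrically, $\{\varphi(s_a),\varphi(s_c),\varphi(s_b')\}=[3]$ and $\{\varphi(s_a),\varphi(s_b),\varphi(s_c')\}=[3]$. In particular $\varphi(s_b)\ne\varphi(s_c)$, and likewise for the other pairs, so $s_a,s_b,s_c$ use all three colors. Now suppose $\varphi(s_1)=\varphi(s_2)=\varphi(s_3)=x$. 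Then $x$ must differ from $\varphi(s_a),\varphi(s_b),\varphi(s_c)$, which together are all of $[3]$. This is a contradiction, so no valid coloring makes $s_1,s_2,s_3$ all equal.

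\textbf{Second part.} Suppose $s_1,s_2,s_3$ are not all equal. I choose $\varphi(s_a),\varphi(s_b),\varphi(s_c)$ to be a bijection onto $[3]$ with $\varphi(s_a)\neq\varphi(s_1)$, $\varphi(s_b)\ne\varphi(s_2)$ and $\varphi(s_c)\ne\varphi(s_3)$. This is a derangement-type assignment. If the colors of $s_1,s_2,s_3$ take the pattern $x,x,y$, assign $\varphi(s_c)=x$ and give $s_a,s_b$ the remaining two colors $\{y,z\}$ in either order. If they take the pattern $x,y,z$ (all distinct), a cyclic shift works.

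Then I set $\varphi(s_a')=\varphi(s_a)$, and similarly for $b$ and $c$. The set condition at $s_a$ then holds, since $\{\varphi(s_b),\varphi(s_c),\varphi(s_a)\}=[3]$. At $s_a'$ the condition requires $\varphi(s_a)\ne\varphi(s_1)$, which holds by the choice above. The $1$-plane condition at $s_1$ only involves the single crossing with $s_a'$, so it holds automatically.

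\textbf{Main obstacle.} The only delicate step is justifying that $s_1,s_2,s_3$ carry no other crossings that could interfere. This is settled by the paper's convention of attaching gadgets through buffered equality chains (\cref{lem:independence_3_1}), which I will cite.
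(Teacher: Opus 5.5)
Your proof is correct and takes essentially the same route as the paper: the triangle $s_a,s_b,s_c$ must use all three colors, each $s_a'$ forces $\varphi(s_a)\neq\varphi(s_1)$ (analogously for $b,c$), which rules out $s_1,s_2,s_3$ all having the same color, and the converse is shown by explicit extensions for the patterns $x,x,y$ and $x,y,z$. Your observation that $\varphi(s_a')=\varphi(s_a)$ is forced gives a cleaner extension step than the paper's written one. The paper's explicit coloring for the case $s_1,s_2$ orange, $s_3$ purple has the colors of $s_a$ and $s_c$ swapped: as stated, $s_a$ and $s_1$ would both be orange and both cross $s_a'$.
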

\begin{proof}
        Note that since $s_a, s_b$ and $s_c$ pairwise intersect, if any two of them would be colored the same color, the third edge would intersect two edges of the same color.
        Therefore they all have a unique color.
        Next observe that $s_1$ and $s_a$ both intersect $s_a'$ and therefore have different colors.
        The same holds for $s_2$ and $s_b$, which intersect $s_b'$ as well as $s_3$ and $s_c$, which intersect $s_c'$.

        If $s_1, s_2$ and $s_3$ all have the same color, none of $s_a, s_b$ and $s_c$ can have that color, which is a contradiction, see \cref{fig:gadget_clause_unsat} for an illustration.

        Now assume that two segments of $s_1, s_2$ and $s_3$ have the same color with the third being colored differently,
        say $s_1$ and $s_2$ are orange and $s_3$ is purple (see \cref{fig:gadget_clause_2_1}).
        Now, we can color $s_a$ in orange, $s_b$ in purple and $s_c$ in blue.
        To obtain $1$-plane $3$-edge-coloring of the not-all-equal gadget, we can color $s_a'$ in blue, $s_b'$ in purple and $s_c'$ in orange.

        If all three segments of~$s_1,s_2,s_3$ have distinct colors the edge-coloring can be extended to a $1$-plane $3$-edge-coloring of the whole not-all-equal \gad as shown in \cref{fig:gadget_clause_1_1_1}.
\end{proof}

\subsection{Recognizing general \texorpdfstring{$\bm{t}$}{t}-colorable \texorpdfstring{$\bm{1}$}{1}-plane drawings is \texorpdfstring{\bm{\NP}}{NP}-hard}\label{sec:app:gadgets_k_1}
The reduction of the previous section is easily extended, yielding a reduction to the following result.
\begin{theorem}\label{thm:recognition_k_1}
    Deciding whether a given straight-line drawing is $t$-colorable $1$-plane is \NP-complete for $t \geq 3$.
\end{theorem}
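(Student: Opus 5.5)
Membership in $\NP$ is immediate: given a $t$-edge-coloring, we check in polynomial time that every edge is crossed by at most one edge of each color. For hardness we reduce from \tcsc, which is $\NP$-complete for every $t\geq 2$ by \cref{thm:hardness_tcsc}. We mimic the proof of \cref{thm:recognition_3_1}, replacing each gadget of \cref{fig:gadgets} by a $t$-color version. The idea is to \emph{pad} every gadget with extra segments that soak up the $t-3$ surplus colors. In a $1$-plane $t$-coloring, if an edge $e$ is crossed by $t-3$ further edges that already carry $t-3$ distinct colors, then only three colors remain for the other edges crossing $e$. Those edges then behave exactly as in the $3$-color setting.

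To realize this, we first build a \emph{color-blocking} structure. We take $t$ pairwise crossing segments $b_1,\dots,b_t$. Their conflict graph is $K_t$, and in every $1$-plane $t$-coloring all $t$ colors on it are distinct, by the counting argument from the proof of \cref{thm:tk-plane_not_t-col_k-plane}. We then chain such cliques using equality-type constructions, so that one fixed set of $t-3$ ``waste'' colors is available globally on designated segments. Every gadget segment whose crossing count matters, such as $s^v_1,s^v_2,s^h_1,s^h_2$ in the equality gadget, $s_a,s_b,s_c,s_a',s_b',s_c'$ in the not-all-equal gadget, and the middle segment of the inequality gadget, additionally gets crossed by $t-3$ blocker segments carrying the $t-3$ waste colors. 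The not-all-equal \conn already supplies the segments $s_4,\dots,s_t$ with forced colors $4,\dots,t$, so the \tcsc instance itself provides this global consistency. We can route these segments into the gadget as the blockers.

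With the blockers in place, we prove the analogues of \cref{lem:equality_3_1}, \cref{lem:inequality_3_1}, \cref{lem:splitter_3_1}, \cref{lem:crossing_3_1} and \cref{lem:nae_3_1}. Every argument there only used the fact that three edges crossing a common edge get distinct colors out of a palette of $3$. After blocking, that palette is the three non-waste colors, so the same arguments go through. Conversely, each valid $3$-color coloring in the figures extends by coloring the blockers with the waste colors. We also need an analogue of \cref{lem:independence_3_1}, obtained by inserting two equality gadgets in a row, so that gadgets sharing a segment do not interfere.

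The main obstacle is making the blockers consistent. They must be forced to carry exactly the same $t-3$ colors everywhere. They must also not themselves violate the $1$-plane condition, since each blocker crosses many gadget edges but may be crossed at most once per color. I would handle this by giving each gadget its own local copies of the blockers. These copies are connected to a global waste structure through equality gadgets, in the same way the waste trees $T^w$ propagate colors in \cref{sec:app:framework_details}. Each local blocker then crosses only $O(1)$ segments, each of a distinct non-waste color, so its own constraint is met.
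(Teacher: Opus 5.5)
\textbf{There is a genuine gap: for $t\ge 4$ the padding makes every constructed drawing a no-instance.} Blocking the $t-3$ fixed colors $4,\dots,t$ on $s^v_1,s^v_2$ (and on the middle segment of the inequality gadget) forces every segment that a gadget connects into the three unblocked colors. But the $t$-CSC instances from \cref{thm:hardness_tcsc} genuinely use all $t$ colors on connected segments.

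Concretely, the $t-2$ waste trees $T^w$ of \cref{sec:app:framework_details} must receive pairwise distinct colors, among them the colors $4,\dots,t$ that your blockers occupy. Their segments are joined by splitter, equality and inequality \conns. With your padded gadgets, $s_i$ and $s_o$ (and $s_o'$) each cross a segment that already sees the colors $4,\dots,t$, so all waste-tree segments are confined to three colors. Since the waste trees need pairwise distinct colors and the variable trees must avoid all of them (\cref{lem:framework_waste_trees_different,lem:framework_only_two_colors}), at most one color remains for all variable trees, and none if $t\ge 5$. Hence every not-all-equal \conn is violated.

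It fails even more directly at the inequality gadget. A padded inequality gadget between a segment of the color-$4$ waste tree and any other segment has its middle segment crossed twice by color $4$. So the drawing you build is never $t$-colorable $1$-plane, whatever the \naetsat formula. The same issue undermines your ``global waste structure''. Copying waste colors onto local blocker copies needs an equality gadget that transmits an \emph{arbitrary} color, which your padded gadget cannot do and which you never construct.

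\textbf{The missing observation is that no fixed colors are needed.} The proof of \cref{lem:equality_3_1} only uses that $s^v_1$ and $s^v_2$ share a set of other crossing partners whose colors are forced to be pairwise distinct. It does not matter which colors these are. The paper therefore lets $s^v_1$ and $s^v_2$ share $t-1$ horizontal segments. $1$-planarity forces these to use $t-1$ distinct colors, so $s_i$ and $s_o$ both receive the unique remaining one.

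The splitter and crossing gadgets are generalized the same way, and the inequality gadget stays unchanged. Fixed colors enter only in the not-all-equal gadget, through a $K_t$ whose $t-3$ waste segments are tied locally to that connector's own segments $s_4,\dots,s_t$. The global consistency problem you single out as the main obstacle therefore never arises.

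Padding could be made to work by reducing from the $3$-colorable $1$-plane case instead of from $t$-CSC. However, distributing the blockers would again require exactly this unpadded $t$-color equality gadget, and once you have it the padding is superfluous.
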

\begin{proof}
The gadgets we need to construct for the reduction from \tcsc are simple adaptions of the gadgets presented in \cref{sec:app:gadgets_3_1}.
The adaptions we need to make are as follows.

The inequality gadget does not need to be generalized, as the base version of \cref{sec:app:gadgets_3_1} works for all values of $t$.
    \begin{figure}
        \centering
        \subcaptionbox{Equality gadget\label{fig:gadget_equality_t_1}}{\includegraphics[page=29]{figures/gadgets.pdf}}
        \subcaptionbox{Splitter gadget\label{fig:gadget_splitter_t_1}}{\includegraphics[page=30]{figures/gadgets.pdf}}
        \subcaptionbox{Crossing gadget\label{fig:gadget_crossing_t_1}}{\includegraphics[page=31]{figures/gadgets.pdf}}
        \subcaptionbox{Not-all-equal gadget\label{fig:gadget_nae_t_1}}{\includegraphics[page=32]{figures/gadgets.pdf}}
        \caption{The gadget constructions for the $t$-colorable $1$-plane setting. In the construction of the not-all-equal gadget (d) the pre-coloring of the fourth to $t$-th segment is indicated as well as the small copies of (in)equality gadgets are highlighted.}
        \label{fig:gadgets_t_1}
    \end{figure}

The equality gadget in \cref{fig:gadget_equality} for $t=3$ created two segments whose shared neighborhood had size $t-1 = 2$ and differed only in the two segments, which therefore were ensured to have the same color.
By increasing the number of horizontal segments, the segments shared neighborhood can be increased to general values of $t$ and as a result the construction in \cref{fig:gadget_equality_t_1} is an equality gadget for general $t$.
Of course the exact geometry is again irrelevant as long as the intersection pattern is identical and the gadget can be redrawn to obtain a corner gadget.

The generalization of the splitter gadget (\cref{fig:gadget_splitter_t_1}) works identically to the case of $t=3$ by simply adding an additional vertical line, creating three lines with a shared neighborhood of size $t-1$.

The same goes for the crossing gadget (\cref{fig:gadget_crossing_t_1}), where we place $t-1$ horizontal and $t-1$ vertical lines creating two segments whose neighborhood differs only in the segments $s_i$ and $s_o$ and a second pair of segments whose neighborhood differs only in  $s_i'$ ans $s_o'$.

The most involved construction is the general not-all-equal gadget (\cref{fig:gadget_nae_t_1}), since any value $t\geq4$ also involves additional segments with a fixed color (recall that for values $t\geq4$ the reduction in \cref{subsec:framework} connected $t-3$ waste tree segments, all with pairwise different colors, to the not-all-equal gadget).
The gadget consists of a set of $t$ pairwise intersecting segments, i.e., their conflict graph is the complete graph on $t$ vertices $K_t$ (in slight abuse of notation we will refer to these segments collectively as $K_t$).
The three input segments $s_1$, $s_2$, and $s_3$ are connected to three segments of $K_t$ with an inequality gadget, which we will call the \emph{clause segments} of the gadget.
The remaining $t-3$ edges (the \emph{waste segments} of the gadget) are each connected to one of $t-3$ input segments, which, by assumption of the gadget, have pairwise different colors $i\in\{4, \ldots, t\}$.
Therefore none of the clause segments of $K_t$ have a color in \{4, \ldots, t\}.
This leaves us with three pairwise intersecting segments, which can have one of three colors and which are connected via an inequality gadget to three input segments $s_1$, $s_2$, and $s_3$, which is the exact construction of the not-all-equal gadget of \cref{fig:gadget_clause} and it follows from \cref{lem:nae_3_1} that this construction is a generalized not-all-equal gadget.

The \gads can again be colored independently since \cref{lem:independence_3_1} immediately extents to a $t$-colorable $1$-plane setting.
Since \NP-containment is similarly straightforward than in \cref{sec:app:gadgets_3_1}, this concludes the proof.
\end{proof}

\subsection{Gadget correctness for the \texorpdfstring{$\bm{2}$}{2}-colorable \texorpdfstring{$\bm{2}$}{2}-plane setting}\label{sec:app:gadgets_2_2}
For the $2$-colorable $2$-plane setting we will first present a helper gadget which forces two edges to have necessarily the same color.
    This is the required property of the equality gadget, however in the construction of the helper gadget these segments require two intersections and we will present later on the actual equality gadget, which only requires one intersection per segment.

    \begin{figure}
        \centering
        \subcaptionbox{Helper gadget\label{fig:gadget_bottle_2_2_construction}}{\includegraphics[page=17]{figures/new_gadgets.pdf}}
        \hfill
        \subcaptionbox{Valid coloring\label{fig:gadget_bottle_2_2_sat}}{\includegraphics[page=18]{figures/new_gadgets.pdf}}
        \hfill
        \subcaptionbox{Invalid\label{fig:gadget_bottle_2_2_unsat_1}}{\includegraphics[page=19]{figures/new_gadgets.pdf}}
        \hfill
        \subcaptionbox{Invalid\label{fig:gadget_bottle_2_2_unsat_2}}{\includegraphics[page=20]{figures/new_gadgets.pdf}}
        \hfill
        \subcaptionbox{Invalid\label{fig:gadget_bottle_2_2_unsat_3}}{\includegraphics[page=21]{figures/new_gadgets.pdf}}
        \caption{The helper gadget for the $2$-colorable $2$-plane setting. The two extended segments are forced to have the same color; a valid coloring is shown in (b). Any coloring where the two segments $s_a'$ and $s_b'$ have different colors lead to a segment necessarily being crossed by three segments of the same color. Subfigures (c), (d), and (e) show the case distinction over the options for $s^\nearrow_1$ and $s^\nwarrow_2$. Assumed colorings of a case are indicated with an A, implications on the color of other segments are indicated with black arrows and purple circles, contradictions with red circles.}
        \label{fig:gadget_botlle_2_2}
    \end{figure}

    The helper gadget (\cref{fig:gadget_botlle_2_2}) consists of four inner diagonal segments $s^\nearrow_1$, $s^\nearrow_2$,$s^\nwarrow_1$, and $s^\nwarrow_2$, where $s^\nearrow_i$ and $s^\nwarrow_j$ intersect for any combination of $i,j\in\{1,2\}$.
    Additionally there are four outer segments: two horizontal segments $s_a$ (which intersects $s^\nearrow_1$ and $s^\nwarrow_1$) and $s_b$ (which intersects $s^\nearrow_2$ and $s^\nwarrow_2$) and two vertical segments $s_a'$ (which intersects $s^\nearrow_1$ and $s^\nwarrow_2$) and $s_b'$ (which intersects $s^\nearrow_2$ and $s^\nwarrow_1$).

    \begin{lemma}
    \label{lem:helper_gadget}
        The two segments $s_a'$ and $s_b'$ have the same color in any $2$-plane $2$-edge-coloring of the helper gadget.
    \end{lemma}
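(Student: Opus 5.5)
The plan is to work in the conflict graph of the helper gadget and use a counting argument rather than the case analysis of \cref{fig:gadget_botlle_2_2}. From the description, each of the four inner diagonal segments is crossed by exactly four segments:
\begin{itemize}
\item $s^\nearrow_1$ by $s^\nwarrow_1, s^\nwarrow_2, s_a, s_a'$;
\item $s^\nearrow_2$ by $s^\nwarrow_1, s^\nwarrow_2, s_b, s_b'$;
\item $s^\nwarrow_1$ by $s^\nearrow_1, s^\nearrow_2, s_a, s_b'$;
\item $s^\nwarrow_2$ by $s^\nearrow_1, s^\nearrow_2, s_b, s_a'$.
\end{itemize}
In a $2$-plane $2$-edge-coloring (blue/orange), each inner segment sees at most two segments of each color among its four crossing partners. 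Hence it sees \emph{exactly} two blue and two orange ones. This turns the coloring constraints into four linear equations.

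Suppose, for contradiction, that $s_a'$ is blue and $s_b'$ is orange. Let $p$ be the number of blue segments among $s^\nwarrow_1, s^\nwarrow_2$ and $q$ the number of blue segments among $s^\nearrow_1, s^\nearrow_2$. Let $\alpha, \beta \in \{0,1\}$ indicate whether $s_a$, respectively $s_b$, is blue. The four ``exactly two blue'' conditions at $s^\nearrow_1$, $s^\nearrow_2$, $s^\nwarrow_1$, $s^\nwarrow_2$ read
\[
p+\alpha+1=2,\qquad p+\beta=2,\qquad q+\alpha=2,\qquad q+\beta+1=2 .
\]
The first and third equations give $q=p+1$. The second and fourth give $q=p-1$, a contradiction. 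The case where $s_a'$ is orange and $s_b'$ is blue is symmetric under swapping the two colors. Therefore $s_a'$ and $s_b'$ receive the same color.

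The only step needing care is reading off the crossing pattern exactly as specified, in particular which of $s_a', s_b'$ crosses which $\nwarrow$-diagonal. Once the neighborhoods are fixed, the equations are immediate. For completeness, I would also note that a valid coloring exists (\cref{fig:gadget_bottle_2_2_sat}), so the gadget is not vacuous. This is not needed for the statement itself.
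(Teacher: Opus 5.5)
Your proof is correct, but it takes a different route from the paper. The paper fixes $s_a'$ orange and $s_b'$ blue. It then distinguishes cases on the colors of $s^\nearrow_1$ and $s^\nwarrow_2$ (both orange, one of each, both blue), and in each case propagates forced colors until some segment is crossed by three segments of the same color.

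You instead use that every inner diagonal has exactly four crossing partners. A $2$-plane $2$-coloring therefore forces exactly two of each color on each of them, which turns the gadget into a linear system. Your neighborhoods agree with the paper's specification, and your system is indeed inconsistent.

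You can even drop the contradiction setup and the symmetry step. Write $x,y\in\{0,1\}$ for the indicators that $s_a'$ and $s_b'$ are blue. The four constraints are $p+\alpha+x=2$, $p+\beta+y=2$, $q+\alpha+y=2$ and $q+\beta+x=2$. Taking the first minus the second minus the third plus the fourth gives $2x-2y=0$, i.e., $x=y$.

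Your approach is shorter and does not depend on the figures. It also shows why the gadget works: $s_a$ and $s_b$ join diagonals with equal indices, while $s_a'$ and $s_b'$ join diagonals with different indices. The paper's propagation argument is more visual and matches how the other gadgets are verified. Both arguments rely on the inner diagonals being crossed only by segments of the gadget itself, which holds in the reduction.
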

    \begin{proof}
        First observe that there is a valid coloring in which $s_a'$ and $s_b'$ have the same color, see \cref{fig:gadget_bottle_2_2_sat}.
        Now assume towards a contradiction that they have different colors and that $s_a'$ is orange.
        Then we distinguish three cases of the possible coloring of $s^\nearrow_1$ and $s^\nwarrow_2$.
        If both are orange (\cref{fig:gadget_bottle_2_2_unsat_1}) then $s^\nearrow_1$ and $s^\nwarrow_2$ both cross two orange segments.
        Any remaining segments including $s_a$ and $s^\nearrow_2$ are blue and $s^\nwarrow_1$ crosses three blue segments.
        If $s^\nearrow_1$ is orange and $s^\nwarrow_2$ is blue (\cref{fig:gadget_bottle_2_2_unsat_2}),  then $s^\nwarrow_2$ crosses two orange segments and $s_b$ is blue.
        Then $s^\nwarrow_2$ crosses three blue segments.
        The case where $s^\nearrow_1$ is blue and $s^\nwarrow_2$ is orange is symmetric.
        If $s^\nearrow_1$ and $s^\nwarrow_2$ are blue (\cref{fig:gadget_bottle_2_2_unsat_3}), then $s^\nearrow_2$ crosses two blue segments and $s^\nwarrow_1$ is orange.
        Now $s^\nearrow_1$ crosses two orange segments and $s_a$ is blue.
        As a result $s^\nwarrow_1$ crosses three blue segments.
        This concludes the proof.
    \end{proof}

    We will call $s_a'$ and $s_b'$ the \emph{connection segments} of the helper gadget.
    If both~$s_a'$ and~$s_b'$ of the helper gadget~$g$ intersect a segment~$s$ that is not in~$g$, we say that~$g$ is connected to~$s$.

    \begin{figure}
        \centering
        \subcaptionbox{Equality\label{fig:gagdet_equality_2_2}}{\includegraphics[page=29]{figures/new_gadgets.pdf}}
        \hfill
        \subcaptionbox{Splitter\label{fig:gagdet_splitter_2_2}}{\includegraphics[page=30]{figures/new_gadgets.pdf}}
        \hfill
        \subcaptionbox{Inequality\label{fig:gagdet_inequality_2_2}}{\includegraphics[page=31]{figures/new_gadgets.pdf}}
        \subcaptionbox{Crossing (same signal)\label{fig:gagdet_crossing_2_2_a}}{\includegraphics[page=32]{figures/new_gadgets.pdf}}
        \hfill
        \subcaptionbox{Crossing  (different signal)\label{fig:gagdet_crossing_2_2_b}}{\includegraphics[page=33]{figures/new_gadgets.pdf}}
        \hfill
        \subcaptionbox{Not-all-equal\label{fig:gagdet_nae_2_2}}{\includegraphics[page=34]{figures/new_gadgets.pdf}}
        \caption{The $2$-colorable $2$-plane gadgets. Helper gadgets are indicated by a green icon, whose protruding edges are the segments forced to be the same color. All gadgets are shown with a valid coloring (edges whose coloring can be freely chosen are left black).}
        \label{fig:gadgets_2_2}
    \end{figure}

    Now we proceed to present the five required gadgets starting with the actual equality gadget.
    An equality gadget (\cref{fig:gagdet_equality_2_2}) connecting two segments $s_i$ and $s_o$ is simply a segment $s$ which intersects both and is connected to a helper gadget.
    \begin{lemma}\label{lem:equality_2_2}
        Two segments $s_i$ and $s_o$ connected by an equality gadget have the same color in any $2$-plane $2$-edge-coloring.
    \end{lemma}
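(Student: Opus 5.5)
By \cref{lem:helper_gadget}, the two connection segments $s_a'$ and $s_b'$ of the helper gadget attached to $s$ always receive the same color, say orange. The plan is to use this fact to pin down the color of $s$ and then read off the color of $s_i$ and $s_o$.

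First, observe that $s$ is crossed by exactly four segments: $s_i$, $s_o$, $s_a'$ and $s_b'$. Since $s_a'$ and $s_b'$ are both orange, $s$ already carries its full budget of two orange crossings. In a $2$-plane $2$-edge-coloring, neither $s_i$ nor $s_o$ may then be orange, since otherwise $s$ would be crossed by three orange edges. With only two colors available, $s_i$ and $s_o$ must both be blue, so $\varphi(s_i)=\varphi(s_o)$.

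Strictly speaking, this shows more than we need: $s_i$ and $s_o$ always get the color opposite to the one forced on the helper's connection segments. The color of the connection segments is not fixed in advance, though. By symmetry of the two colors (the valid coloring in \cref{fig:gadget_bottle_2_2_sat} can be recolored by swapping orange and blue), either color can occur for $s_i$ and $s_o$. So the gadget enforces equality without prescribing which color is used.

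The only delicate point is to make sure that the crossings of $s$ are exactly those listed. In particular, $s$ must cross no helper segment other than $s_a'$ and $s_b'$, and the helper gadget's internal analysis must be unaffected by its connection segments crossing $s$. For the first, I would read the intersection pattern off \cref{fig:gagdet_equality_2_2}. For the second, note that \cref{lem:helper_gadget} only uses crossings internal to the helper gadget, and adding an external crossing can only impose more constraints, never fewer. The lemma therefore still applies.
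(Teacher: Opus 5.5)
Your proposal is correct and is essentially the paper's proof. The paper also uses that $N(s)=\{s_i,s_o,s_a',s_b'\}$, that the two connection segments share a color by \cref{lem:helper_gadget}, and hence that $s_i$ and $s_o$ must both take the other color. Your observation that restricting a $2$-plane $2$-edge-coloring to the helper gadget leaves it valid, so \cref{lem:helper_gadget} still applies, is a correct detail the paper leaves implicit.
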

    \begin{proof}
        This lemma is immediate since $N(s) = \{s_i, s_o, s_c, s_c'\}$, where $s_c$ and $s_c'$ are the connection segments of the helper gadget which necessarily have the same color (cf. \cref{lem:helper_gadget}) and therefore the remaining two segments have the other color.
    \end{proof}
    Once again the geometry can be adapted to connect segments with the same or a differing orientation.
    Note that the segment $s$ can have any color.

    The splitter gadget (\cref{fig:gagdet_splitter_2_2}) connecting three segments $s_i, s_o$, and $s_o'$ consists simply of two equality gadgets connecting $s_i$ and $s_o$ as well as $s_i$ and $s_o'$.
    \begin{lemma}\label{lem:splitter_2_2}
        Two segments $s_i$ and $s_o$ connected by a splitter gadget have the same color in any $2$-plane $2$-edge-coloring.
    \end{lemma}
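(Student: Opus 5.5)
The splitter gadget is defined as two equality gadgets, one connecting $s_i$ to $s_o$ and one connecting $s_i$ to $s_o'$. So the plan is to apply \cref{lem:equality_2_2} to each of them separately and then use transitivity through $s_i$. The first application gives $\varphi(s_i)=\varphi(s_o)$. The second gives $\varphi(s_i)=\varphi(s_o')$. Together these yield $\varphi(s_i)=\varphi(s_o)=\varphi(s_o')$, which is the full splitter requirement, and in particular the stated equality of $s_i$ and $s_o$.

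The one point to check is that \cref{lem:equality_2_2} still applies when the two equality gadgets share the segment $s_i$. Its proof uses only local structure around the gadget's middle segment $s$. That segment has neighbourhood $N(s)=\{s_i,s_o,s_c,s_c'\}$. Its two connection segments $s_c,s_c'$ have equal colour by \cref{lem:helper_gadget}, which is itself a statement about the helper gadget alone.

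Attaching a second gadget to $s_i$ adds crossings to $s_i$, but it does not change $N(s)$ for either middle segment. It also does not change the helper gadgets. So in any $2$-plane $2$-edge-coloring of the whole configuration, the restriction to each equality gadget still satisfies the hypotheses, and its conclusion holds.

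The only potential obstacle concerns the other direction, namely that the configuration is colorable at all. Now $s_i$ is crossed by two middle segments, so one might worry that $s_i$ receives too many same-coloured crossings. For the stated lemma this does not matter: we only need the implication that every valid coloring makes the three segments equal. Realizability and independence of colorings would be handled separately, in the same way as \cref{lem:independence_3_1}. It suffices to note that each middle segment $s$ may take either colour, so both can be chosen to differ from each other, and $s_i$ then has at most one crossing per colour from them.
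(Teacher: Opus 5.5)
Your proof is correct and is exactly the paper's argument: the splitter consists of two equality gadgets sharing $s_i$, so two applications of \cref{lem:equality_2_2} together with transitivity give $\varphi(s_i)=\varphi(s_o)=\varphi(s_o')$. The paper leaves implicit your check that the equality lemma survives sharing $s_i$: any $2$-plane $2$-edge-coloring of the whole drawing restricts to one of each sub-gadget, since deleting segments only removes crossings. Your realizability remarks go beyond what the stated lemma needs.
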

    \begin{proof}
        The statement follows from two applications of \cref{lem:equality_2_2}.
    \end{proof}

    The inequality gadget (\cref{fig:gagdet_inequality_2_2}) connecting the segments $s_i$ and $s_o$ consists of a splitter gadget connecting $s_i$ to two segments $s_1$ and $s_2$ and an additional segment $s_3$ which intersects $s_1$, $s_2$, and $s_o$.
    \begin{lemma}\label{lem:inequality_2_2}
        Two segments $s_i$ and $s_o$ connected by an inequality gadget cannot have the same color in any $2$-plane $2$-edge-coloring.
    \end{lemma}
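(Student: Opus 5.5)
By \cref{lem:splitter_2_2}, $s_1$ and $s_2$ have the same color as $s_i$ in every $2$-plane $2$-edge-coloring. So it suffices to show that $s_o$ cannot share this color.

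Suppose towards a contradiction that $\varphi(s_i)=\varphi(s_o)$, say both are orange. Then $s_1$, $s_2$ and $s_o$ are all orange. All three cross $s_3$, so $s_3$ is crossed by three orange edges. This contradicts the fact that each edge may be crossed by at most two edges of each color. Hence $\varphi(s_o)\neq\varphi(s_i)$.

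The only point needing care is that the splitter gadget really forces both outputs $s_1$ and $s_2$ to the color of $s_i$, not just one of them. This holds because the splitter is built from two equality gadgets, one joining $s_i$ to $s_1$ and one joining $s_i$ to $s_2$. Applying \cref{lem:equality_2_2} to each gives $\varphi(s_1)=\varphi(s_i)$ and $\varphi(s_2)=\varphi(s_i)$. (\cref{lem:splitter_2_2} as stated only mentions one output, so we apply \cref{lem:equality_2_2} twice to be explicit.)

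For completeness, the gadget is satisfiable whenever $\varphi(s_i)\neq\varphi(s_o)$: then $s_3$ sees two crossings of one color and one of the other, which is allowed. The gadget therefore restricts only the relation between $s_i$ and $s_o$, as required by the inequality connector.
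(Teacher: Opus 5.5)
Your proof is correct and follows the paper's argument: the splitter (two equality gadgets, so \cref{lem:equality_2_2} applied twice) gives $s_1$ and $s_2$ the color of $s_i$, and since $s_3$ is crossed exactly by $s_1$, $s_2$, $s_o$, giving $s_o$ that color too would put three same-colored crossings on $s_3$. Your explicit double application of \cref{lem:equality_2_2} justifies the first step more precisely than the paper's wording does, and your closing satisfiability remark goes beyond what the lemma asks and is not needed.
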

    \begin{proof}
        By three applications of \cref{lem:splitter_2_2} $s_i$, $s_1$, and $s_2$ have the same color.
        Since $N(s_3) = \{s_1, s_2, s_o\}$, the segment $s_o$ has a color different from $s_1$, $s_2$ and therefore also different from the color of~$s_i$.
    \end{proof}

    The crossing gadget (\cref{fig:gagdet_crossing_2_2_a,fig:gagdet_crossing_2_2_b}) connecting $s_i$ to $s_o$ and $s_i'$ to $s_o'$ consists of two segments $s$ and $s'$, which cross each other.
    The segment $s$ is connected to $s_i$ and to $s_o$ with an equality gadget each.
    Similarly segment $s'$ is connected to $s_i'$ and to $s_o'$ with an equality gadget each.
    \begin{lemma}\label{lem:crossing_2_2}
        The segments $s_i$ and $s_o$ as well as the segments $s_i'$ and $s_o'$ which are connected by a crossing gadget have the same color in any $2$-plane $2$-edge-coloring.
        The color of~$s_i$ and~$s_o$ is independent of the color of~$s_i'$ and~$s_o'$.
    \end{lemma}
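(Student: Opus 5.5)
The crossing gadget consists of the two crossing segments $s$ and $s'$ together with four equality gadgets: one linking $s$ to $s_i$, one linking $s$ to $s_o$, one linking $s'$ to $s_i'$, and one linking $s'$ to $s_o'$. The proof has two parts: showing that the colors propagate along each signal, and showing that the two signals do not constrain each other.

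\emph{Forcing.} Apply \cref{lem:equality_2_2} to each of the four equality gadgets. This gives $\varphi(s_i)=\varphi(s)=\varphi(s_o)$ and $\varphi(s_i')=\varphi(s')=\varphi(s_o')$ in every $2$-plane $2$-edge-coloring. The first claim follows immediately.

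\emph{Independence.} We must exhibit valid colorings for both $\varphi(s)=\varphi(s')$ and $\varphi(s)\neq\varphi(s')$. The only interaction between the two signals is the single crossing of $s$ and $s'$. Besides that crossing, $s$ is crossed only by the middle segments of its two equality gadgets, and likewise for $s'$.

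The remark after \cref{lem:equality_2_2} says the middle segment of an equality gadget may take either color. Each middle segment is crossed by exactly two of the segments it links (for example $s$ and $s_i$), and these two share a color, so they cross it at most twice in that color. The other two crossings of a middle segment come from the connection segments of its helper gadget. These have equal color, and that color is free to choose by the valid coloring of \cref{fig:gadget_bottle_2_2_sat} together with the swap symmetry of colors.

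So we can choose the colors of the two middle segments on $s$ as we like. Then $s$ is crossed by $s'$ and two middle segments, three crossings in total. Coloring both middle segments opposite to $\varphi(s')$ gives $s$ at most two crossings per color. The same choice handles $s'$.

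\emph{Gluing.} It remains to extend this to a coloring of the whole gadget by coloring each helper gadget with its valid coloring, chosen independently, with each connection segment's color adjusted as needed via color swapping. This step is the main obstacle. The point to check is that each connection segment and each internal helper segment is crossed only within its own helper gadget and by its own middle segment, so the local colorings can be combined. I would verify this directly from \cref{fig:gagdet_crossing_2_2_a,fig:gagdet_crossing_2_2_b}, which show the two cases of equal and different signal colors.
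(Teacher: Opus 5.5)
Your proof is correct and follows the paper's route: applying \cref{lem:equality_2_2} to the four equality gadgets gives the forcing, and independence comes from the free color of each equality gadget's middle segment (the remark after \cref{lem:equality_2_2}), which the paper simply witnesses by \cref{fig:gagdet_crossing_2_2_a,fig:gagdet_crossing_2_2_b}; you additionally make explicit that coloring the middle segments on $s$ opposite to $\varphi(s')$ (and symmetrically for $s'$) keeps $s$ and $s'$ within budget. One small precision for your gluing step: color swapping alone does not make an arbitrary valid helper coloring compatible with either middle color (if all four diagonals share one color, the middle segment is forced to the connection color, which breaks the different-signal case), so use a helper coloring in which each connection segment is crossed internally by one segment of each color, e.g.\ $s^\nearrow_1,s^\nwarrow_1$ in the connection color and $s^\nearrow_2,s^\nwarrow_2,s_a,s_b$ in the other.
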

    \begin{proof}
        By \cref{lem:equality_2_2}, $s_i$ has the same color as $s$, which in turn has the same color as $s_o$.
        The same argument holds for $s_i'$, $s'$ and $s_o'$.
        As noted above, the central segment of the equality gadget, which is connected to the helper gadget can have any color, therefore the crossing gadget has a valid coloring if $s_i$ and $s_i'$ have the same color (\cref{fig:gagdet_crossing_2_2_a}) as well as when they have different colors (\cref{fig:gagdet_crossing_2_2_b}).
    \end{proof}

    Finally the not-all-equal gadget (\cref{fig:gagdet_nae_2_2}) connecting three input segments $s_1$, $s_2$, and $s_3$ consists of four pairwise intersecting \emph{clause} segments, three of which are connected with an equality gadget to $s_1$, $s_2$, and $s_3$, respectively.
    \begin{lemma}\label{lem:nae_2_2}
        The segments $s_1$, $s_2$, and $s_3$ which are connected by a not-all-equal gadget cannot all have the same color in any $2$-plane $2$-edge-coloring.
    \end{lemma}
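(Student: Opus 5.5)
Let $K$ denote the four pairwise intersecting clause segments $c_1,\dots,c_4$, where $c_j$ is attached to $s_j$ by an equality gadget for $j\in[3]$. Each $c_j$ with $j\in[3]$ therefore crosses the three other clause segments plus the central segment $s$ of its equality gadget. The segment $c_4$ crosses only the other three clause segments. I would argue in two steps: first pin down the coloring of $K$ exactly, then transfer that information to the inputs via \cref{lem:equality_2_2}.

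\textbf{Step 1: $K$ is colored two and two.} Suppose three clause segments share a color, say blue. Then the fourth clause segment crosses three blue segments, which is impossible. Suppose all four share a color; this is excluded even more strongly. Hence each color appears exactly twice on $K$, since there are only two colors and no color appears three or more times. This is the same counting as in the proof of \cref{thm:tk-plane_not_t-col_k-plane} and \cref{prop:blow_up_properties}\eqref{itm:k-plane_t-colorable} applied to $K_4$ with $k=t=2$.

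\textbf{Step 2: local constraint at each $c_j$.} Fix $j\in[3]$ and suppose $c_j$ is blue. Among the other three clause segments exactly one is blue and two are orange. So $c_j$ already sees two orange crossings, and its remaining crossing with the central segment $s$ of the equality gadget to $s_j$ must be blue. That is, $s$ has the same color as $c_j$.

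\textbf{Step 3: transfer to the inputs.} I need the relation between $s_j$ and $c_j$, and this is the step I expect to be delicate. The proof of \cref{lem:equality_2_2} shows that the two non-helper neighbors of $s$, namely $s_j$ and $c_j$, share a color, namely the color opposite to that of the helper connection segments. This holds regardless of the color of $s$. So $\varphi(s_j)=\varphi(c_j)$ follows directly from \cref{lem:equality_2_2}, and Step 2 is not actually needed; it only serves as a consistency check and rules out nothing extra. Then $s_1,s_2,s_3$ being monochromatic would force $c_1,c_2,c_3$ to be monochromatic, contradicting Step 1.

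For completeness I would also check, via \cref{fig:gagdet_nae_2_2}, that every non-monochromatic input coloring extends to a valid coloring of the gadget. Suppose two inputs share a color and the third differs. Color $c_4$ with the minority color, so that $K$ is two and two; the centers $s$ and the helper gadgets can then be colored as in \cref{lem:equality_2_2}, noting that $s$ may take any color.
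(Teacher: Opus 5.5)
Your proof is correct and follows the paper's argument. The four pairwise crossing clause segments must be colored two and two, and \cref{lem:equality_2_2} gives each of $c_1,c_2,c_3$ the color of its input, so monochromatic inputs $s_1,s_2,s_3$ are impossible. Your Step~2 is valid but not needed for the statement; its only effect is that in the extension direction each central segment $s$ must receive the color of its clause segment, which the helper gadget permits.
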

    \begin{proof}
        Since the four clause segments pairwise intersect, no three of them can have the same color as the fourth would have three intersections with segments of the same color.
        However any combination of two segments being colored in orange and the other two blue is possible.
        Indeed, by \cref{lem:equality_2_2}, the three clause segments connected to $s_1$, $s_2$, and $s_3$ via equality gadgets have the color of the corresponding segment~$s_i$.
        Then the fourth clause segment can be colored such that there are exactly two clause segments of each color, if and only if $s_1$, $s_2$, and $s_3$ do not all have the same color.
    \end{proof}

\begin{theorem}\label{thm:recognition_2_2}
    Deciding whether a given drawing is $2$-colorable $2$-plane is \NP-complete.
\end{theorem}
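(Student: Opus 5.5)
Membership in $\NP$ is immediate: a $2$-edge-coloring is a polynomial-size certificate, and we can verify it by counting, for every edge, its crossings with edges of each color in the given planarization. For $\NP$-hardness we reduce from \tcsc with $t=2$, which is $\NP$-complete by \cref{thm:hardness_tcsc}. Given an instance, we replace every segment by an edge and every \conn by the corresponding gadget of \cref{fig:gadgets_2_2}: equality (\cref{lem:equality_2_2}), splitter (\cref{lem:splitter_2_2}), inequality (\cref{lem:inequality_2_2}), crossing (\cref{lem:crossing_2_2}) and not-all-equal (\cref{lem:nae_2_2}). For $t=2$ the not-all-equal \conn has no extra segments. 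The result is a drawing~$\Gamma$ of polynomial size, built as a set of segments exactly as in the proof of \cref{thm:recognition_3_1}.

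Soundness is straightforward. If $\Gamma$ has a $2$-plane $2$-edge-coloring, its restriction to the instance segments satisfies every \conn, since each gadget lemma holds in any $2$-plane $2$-edge-coloring of the whole drawing; this is because the gadgets' internal crossing structure is unaffected by the rest of $\Gamma$. Completeness needs more care. Given a consistent coloring of the segments, we must extend it to all gadget edges. For this we need that every gadget admits, for each allowed coloring of its connected segments, a valid extension. The figures in \cref{fig:gadgets_2_2} and the proofs of the lemmas supply these extensions: the helper gadget has a valid coloring, the central segment of an equality gadget can take any color, and the fourth clause segment is chosen to balance two and two.

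The main obstacle is \emph{independence}, since each instance segment has a budget of only two crossings per color. A segment in $S$ can be crossed by several gadgets: each equality gadget crosses it once via its central segment~$s$, and the not-all-equal gadget crosses it once via a clause segment. These gadgets must not jointly overload it. My plan is to mimic \cref{lem:independence_3_1}. Before placing gadgets, we replace every segment of $S$ that is incident to two \conns by a chain of three segments joined by two equality gadgets, so that each segment is crossed by at most two gadget edges in total. We then check that two crossings of arbitrary colors are always admissible, since with $k=2$ even two crossings of the same color are fine. The remaining checks are the following.
\begin{itemize}
\item Internal gadget edges are crossed only within their gadget: the central segment has exactly four neighbours, two of each color, and the clause segments have three neighbours inside $K_4$ plus one input edge.
\item The free choices inside the gadgets, such as the colors of central segments and the helper gadget coloring, never create a third monochromatic crossing on a shared segment.
\end{itemize}
I would also verify that the helper gadget's connection segments $s_a',s_b'$ have only one external crossing each, and that this is compatible with the valid coloring in \cref{fig:gadget_bottle_2_2_sat} for either color of the external segment. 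If necessary we swap the two colors within the helper gadget by symmetry. Finally, the geometric realization as a drawing follows the rectilinear layout of the framework as before.
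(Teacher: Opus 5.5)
Your reduction, gadgets, soundness argument and NP-membership argument are the paper's. The one place you deviate is the independence step, and there your extra construction is both unnecessary and ineffective. The paper does not imitate \cref{lem:independence_3_1} for $k=2$. It simply notes that every segment is attached to at most two gadgets, and that with $k=2$ the crossings these gadgets put on it are admissible in any colors. Your chain of three segments joined by two equality gadgets does not change this picture. The middle segment is attached to two equality gadgets, and each end segment to one equality gadget plus the original connector. So the chain cannot be what guarantees ``at most two gadget edges per segment''.

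Moreover, that bound fails as stated, and the paper's one-line justification glosses over the same point. A splitter consists of two equality gadgets whose central segments both cross its input $s_i$. Hence the segment of a root or internal vertex of a variable tree, which also belongs to a second connector, is crossed by three gadget edges, with or without the chain. What closes this is the freedom of the central segments. Your second bullet asserts this, but it should say ``can be chosen not to create'' rather than ``never creates''.

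By \cref{lem:helper_gadget} the connection segments share a color $X$. The helper gadget can then be colored with $s_a,s_b,s^\nearrow_1,s^\nwarrow_1$ in the other color and $s^\nearrow_2,s^\nwarrow_2$ in $X$. In this coloring each connection segment has one internal crossing of each color, so it tolerates either color of the central segment. You can therefore give distinct colors to the two central segments of each splitter, and to the two central segments meeting $s$ (resp.\ $s'$) in a crossing gadget. No central segment lies in two such pairs. The central segments at the not-all-equal gadget, which are forced to the color of their clause segment, lie in none. Then each connector puts at most one crossing of each color on every segment attached to it, every segment is crossed at most twice per color, and the chain replacement can be dropped.
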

\begin{proof}
    We proof this via reduction from $\tcsc$ for $t=2$.
    Given an instance of \tcsc with $t=2$ we replace every gadget with the corresponding construction of \cref{lem:equality_2_2,lem:inequality_2_2,lem:splitter_2_2,lem:crossing_2_2,lem:nae_2_2} yielding a drawing $\Gamma$ (represented as a set of segments).
    Then by the same lemmas the \tcsc instance has a consistent coloring if and only if $\Gamma$ is $2$-colorable $2$-plane and we have \NP-hardness.
    In contrast to the $t$-colorable $1$-plane setting, the coloring of the gadgets is immediately independent.
    This follows from the fact that any segment is connected to at most two \gads, is therefore only intersected by 2 segments and in a $2$-colorable $2$-plane setting, any combination of colors for these segments is allowed.

    Since an edge-coloring of $\Gamma$ can be represented as a list of integer values and it is straightforward to check in polynomial time if every edge in the drawing is crossed by every color at most twice, we also have \NP-containment, which concludes the proof.
\end{proof}

\subsection{Recognizing \texorpdfstring{$\bm{t}$}{t}-colorable \texorpdfstring{$\bm{k}$}{k}-plane drawings is \texorpdfstring{\bm{\NP}}{NP}-complete for \texorpdfstring{\bm{$k,t \geq 2$}}{k,t ≥ 2}}\label{sec:app:t_to_t_plus_one}
It is trivial that, given a planarization, recognition of $1$-colorable $t$-plane drawings is possible in polynomial time since it boils down to checking the number of intersections on every edge.
Previous sections have shown that we can recognize if a drawing is $2$-colorable $1$-plane in $\mathcal{O}(n+m)$ time (Theorem~\ref{thm:recognition_2_1}) and that recognition becomes \NP-hard in the $3$-colorable $1$-plane setting (Theorem~\ref{thm:recognition_3_1}), the general $t$-colorable $1$-plane setting (Theorem~\ref{thm:recognition_k_1})
as well as in the $2$-colorable $2$-plane setting (Theorem~\ref{thm:recognition_2_2}).

Here we show that all remaining settings are also $\NP$-complete by providing a construction that, given a drawing $\Gamma$ of a graph $G$, creates a drawing $\Gamma'$ of a graph $G'$ that is $t$-colorable $(k+1)$-plane but not $t$-colorable $k$-plane if and only if $\Gamma$ was $t$-colorable $k$-plane but not $t$-colorable $(k-1)$-plane.

The construction revolves around \emph{$(k+1)$-plane $t$-color generators}.
A \emph{$k$-plane $t$-color generator} is a (straight-line) drawing~$\Lambda$ that contains a set~$S$ of $t$ segments which are all incident to the outer face such that in every $(k+1)$-plane $t$-coloring of~$\Lambda$
\begin{enumerate}[(i)]
        \item\label{itm:dist_colors} all segments of~$S$ have distinct colors, and
        \item\label{itm:once_crossed} each segment of~$S$ is crossed exactly once.
\end{enumerate}
Intuitively, we intersect each edge of the given drawing~$\Gamma$ with such a set~$S$ to create the drawing~$\Gamma'$, thereby ``using up'' one crossing per color.
In fact, a drawing~$\Lambda'$ whose conflict graph is a $t$-blowup of the complete graph~$K_{k+2}$ already fulfills \eqref{itm:dist_colors} (cf. \cref{obs:drawing_with_t-blowup-conflict-graph}).
However, each segment of~$\Lambda'$ is already crossed~$t(k+1)$ times.
We thus need to make a small adaptation which requires a generalized equality gadget construction in order to obtain a $k$-plane $t$-color generator.

\begin{lemma}\label{lem:generalized_equality}
    Given two segments $s_1, s_2$ and two positive values $t$ and $k$ with $t>3$ or $t,k> 2$, we can add $tk+1$ segments in such a way that in every $k$-plane $t$-coloring $\phi$ of the segments, $\phi(s_1) = \phi(s_2)$.
\end{lemma}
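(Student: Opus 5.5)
The plan is to generalize the equality \gad of \cref{fig:gadget_equality_t_1} by a counting argument in the conflict graph. We add two ``probe'' segments $a_1,a_2$ and a set $B$ of $tk-1$ further segments, so $tk+1$ segments in total. In the intended intersection pattern, $a_1$ crosses $s_1$ and every segment of $B$, and $a_2$ crosses $s_2$ and every segment of $B$. There are no further crossings among the new segments and $s_1,s_2$. Geometrically we draw $B$ as $tk-1$ parallel horizontal segments and $a_1,a_2$ as two vertical segments crossing all of them. The segment $a_1$ is extended to reach $s_1$ and $a_2$ to reach $s_2$, exactly as in \cref{fig:gadget_equality_restate} and \cref{fig:gadget_corner}, so both straight-line and ``corner'' versions are available.

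\emph{Forcing step.} Let $\phi$ be a $k$-plane $t$-coloring. The segment $a_1$ has exactly $tk$ neighbours in the conflict graph, namely $B\cup\{s_1\}$. Since each of the $t$ colors may appear at most $k$ times on $N(a_1)$, each color appears exactly $k$ times there; this is the tightness argument of \cref{obs:frugal_lower_bound}. The same holds for $N(a_2)=B\cup\{s_2\}$. Comparing multiplicities, let $c$ be any color with $c \neq \phi(s_1)$. Then $c$ appears $k$ times on $B$ (from $N(a_1)$), and color $\phi(s_1)$ appears only $k-1$ times on $B$. Applying the same reasoning to $N(a_2)$ shows that $\phi(s_2)$ is the unique color occurring $k-1$ times on $B$. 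Hence $\phi(s_2)=\phi(s_1)$.

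\emph{Realizability step.} We must also show the gadget admits a $k$-plane $t$-coloring for any prescribed common color $c$ of $s_1,s_2$. Color $B$ so that $c$ appears $k-1$ times and every other color $k$ times. Each $b\in B$ is crossed only by $a_1$ and $a_2$. If $k\geq 2$ this is harmless whatever colors $a_1,a_2$ get; if $k=1$ we give $a_1,a_2$ different colors. The probes $a_1,a_2$ are then crossed by exactly $k$ segments of each color. The segments $s_1$ and $s_2$ each receive one additional crossing, by $a_1$ and $a_2$ respectively.

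The main obstacle is this last point, namely making the extra crossing on $s_1$ and $s_2$ compatible with the rest of the drawing. We have freedom in the colors of $a_1$ and $a_2$: there are $t$ choices, constrained only when $k=1$. I would use this freedom to avoid a color that $s_j$ already sees $k$ times, in the spirit of \cref{lem:independence_3_1}. The hypothesis $t>3$ or $t,k>2$ should be what guarantees enough slack for this choice. I would verify that claim by a short case check on the colors already crossing $s_1$ and $s_2$.
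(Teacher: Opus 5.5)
Your forcing step is the paper's proof: the same construction (one segment crossing $s_1$, one crossing $s_2$, and $kt-1$ pairwise non-crossing segments crossing both), and the same count. Since $N(a_1)\setminus\{s_1\}=N(a_2)\setminus\{s_2\}$ has $kt-1$ elements, it contains exactly one color only $k-1$ times, and that color must be $\phi(s_1)=\phi(s_2)$. Your realizability step and your guess about the role of the hypothesis on $t,k$ go beyond what the lemma asserts: the paper proves only the forcing property, and that counting argument does not use the hypothesis at all.
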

\begin{proof}
    The construction is the natural extension of the equality gadget for $t$-colorable $1$-plane drawings described in the proof of \cref{thm:recognition_k_1}.
    We simply add one segment $s_a$ intersecting $s_1$, another segment $s_b$ intersecting $s_2$ and finally $kt-1$ pairwise non-crossing segments, which all intersect both $s_a$ and $s_b$.
    Since $N(s_a)- s_1 = N(s_b)- s_2$ and $\vert N(s_a)- s_1 \vert = \vert N(s_b)- s_2 \vert = kt-1$, both $s_a$ and $s_b$ are crossed $k$ times by $t-1$ colors and $k-1$ times by the last color.
    Therefore $s_1$ and $s_2$ must have that last color.
\end{proof}

Next we adapt the construction of the set of segments whose conflict graph is a $t$-blowup of $K_{k+1}$.

\begin{figure}
    \centering
    \begin{subfigure}{.35\linewidth}
        \centering
        \includegraphics[page=5]{figures/t-k_k-t_construction.pdf}
        \subcaption[t]{}
        \label{fig:color_generator_a}
    \end{subfigure}
    \quad
    \begin{subfigure}{.16\linewidth}
        \centering
        \includegraphics[page=7]{figures/t-k_k-t_construction.pdf}
        \subcaption[t]{}
        \label{fig:color_generator_b}
    \end{subfigure}
    \quad
    \begin{subfigure}{.35\linewidth}
        \centering
        \includegraphics[page=6]{figures/t-k_k-t_construction.pdf}
        \subcaption[t]{}
        \label{fig:color_generator_c}
    \end{subfigure}
    \caption{
    The conflict graph of the segments in (a) is a $3$-blow up of $K_{4+1}$ (for clarity the segments are not drawn straight-line) where each segment~$s_1,s_2,s_3$ has a distinct color in every $4$-plane $3$-coloring.
    (b) We replace each of the vertical segments~$s_1,s_2,s_3$ with two new segments that are connected with an equality gadget.
    (c) Applying the replacement of~(b) twice to the vertical segments~$s_1,s_2,s_3$, we obtain three segments~$s_1''',s_2''',s_3'''$ of distinct colors, each of which is only crossed once and incident to the outer face.
    }
    \label{fig:placeholder}
\end{figure}

\begin{lemma}\label{lem:generator_low_degree}
    For all values $t$ and $k$, with $t>3$ or $t,k> 2$ there exists a (straight-line) $k$-plane $t$-color generator.
\end{lemma}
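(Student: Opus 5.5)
We build the generator from the drawing~$\Lambda'$ of \cref{obs:drawing_with_t-blowup-conflict-graph}, whose conflict graph is the $t$-blow-up of~$K_{k+2}$, with parts $A_1,\dots,A_{k+2}$ of $t$ segments each. By \cref{prop:blow_up_properties}\eqref{itm:coloring_all_distinct_colors}, applied with $k+1$ in place of~$k$, every $(k+1)$-plane $t$-coloring of~$\Lambda'$ gives the $t$ segments of each part pairwise distinct colors. Choose one part, say $A_{k+2}=\set{s_1,\dots,s_t}$, drawn as parallel vertical segments whose endpoints lie on the outer face. Every segment of~$\Lambda'$ has degree exactly $(k+1)t$ in the conflict graph. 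So in any $(k+1)$-plane $t$-coloring, each segment is crossed exactly $k+1$ times by each color, and its crossing budget is completely used up.

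Next we relocate the segments of~$S$ so that they are crossed only once, as suggested by \cref{fig:placeholder}. Replace each $s_j$ by two collinear pieces: a piece $s_j'$ carrying all of $s_j$'s crossings with $\Lambda'$ except one, and a piece $s_j''$ beyond the drawing. Join them by the generalized equality gadget of \cref{lem:generalized_equality} (with parameter $k+1$, hence needing the stated ranges of $t,k$). That gadget attaches one extra crossing to each of the two pieces. We want the final outer pieces to have exactly one crossing, namely the gadget crossing, and the inner pieces to keep the crossing pattern needed for \cref{prop:blow_up_properties}.

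The counting is delicate here. Removing crossings from $s_j$ shrinks its conflict degree, so the tight-degree argument of \cref{prop:blow_up_properties} no longer applies directly to the modified drawing. The plan is therefore to iterate the replacement, as in \cref{fig:color_generator_c}, in a specific way. Each application transfers one crossing of $s_j$ with $\Lambda'$ onto a new piece, which is linked back by an equality gadget. The gadget contributes one crossing to each linked piece, so the total number of crossings on the $s_j$ side stays $(k+1)t$. After finitely many steps we obtain a segment $s_j'''$ incident to the outer face whose only crossing is with its equality gadget segment $s_a$. This gives property~\eqref{itm:once_crossed}.

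Property~\eqref{itm:dist_colors} then follows by chasing equalities. All pieces descending from $s_j$ share a color by \cref{lem:generalized_equality}. Consequently, the color multiset seen by every original segment of $A_1,\dots,A_{k+1}$ is the same as in~$\Lambda'$. We must re-prove the distinct-colors conclusion for this modified drawing: the proof of \cref{prop:blow_up_properties}\eqref{itm:coloring_all_distinct_colors} only used degree counts at vertices of $A_1,\dots,A_{k+1}$ together with the classes' color counts. Those counts are preserved, because each piece inherits the color of $s_j$, so we can treat the union of the pieces as a virtual vertex.

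We expect two obstacles. The main one is verifying that the added gadgets do not create slack: each inner piece must still be fully saturated, and the gadget segments' own constraints must remain satisfiable, which requires exhibiting an explicit coloring. Second, the whole construction must be realized straight-line with $S$ on the outer face. For that we would draw the gadgets outside the blow-up region, along the extensions of the vertical segments.
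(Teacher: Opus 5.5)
Your construction is essentially the paper's. You take the blow-up drawing, cut each segment $s_j$ of one part into collinear pieces tied together by the gadget of \cref{lem:generalized_equality}, and attach an outer piece $s_j'''$ by a second gadget. You index by the stated definition ($K_{k+2}$ with $(k+1)$-plane colorings), whereas the paper's proof uses $K_{k+1}$ with $k$-plane colorings; this off-by-one comes from the paper and is harmless.

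The gap is your justification of~\eqref{itm:dist_colors}. Your claim that the proof of \cref{prop:blow_up_properties}\eqref{itm:coloring_all_distinct_colors} uses frugality only at $A_1,\dots,A_{k+1}$ is false. In exactly the case you need, a color repeated on the split part $A_{k+2}$, its final contradiction is derived from frugality at a vertex of $A_{k+2}$. Nor is the union of the pieces of $s_j$ a priori a $(k+1)$-frugal ``virtual vertex''. Each piece is frugal separately, so the union may see some color more than $k+1$ times. Its frugality is essentially the statement you want, and for other splits it is false (see below). The paper's proof has the same hole: it simply asserts that $s_i'''$ inherits a distinct color from~$s_i$.

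The repair is a count at the unsplit parts only. Let $x_h(c)$ be the number of segments of $A_h$ of color~$c$, where each $s_j$ counts with the common color of its pieces. A segment of $A_h$ with $h\le k+1$ still has exactly $(k+1)t$ crossings, all with (pieces of) segments of the other parts. Hence $\sum_{h'\neq h}x_{h'}(c)=k+1$ for every color~$c$. It follows that $x_1(c)=\dots=x_{k+1}(c)=:y(c)$ and $x_{k+2}(c)=k+1-k\,y(c)\ge 0$.

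\begin{itemize}
\item For $k\ge 2$ this gives $y(c)\le 1$. Then $\sum_c y(c)=t$ forces $y\equiv 1$ and $x_{k+2}\equiv 1$, so the $s_j$, and hence the $s_j'''$, get distinct colors.
\item For $k=1$ (three parts) the count still permits $y(c)=2$, and here your unbalanced split is essential. Since $s_j'$ carries all but one of the $2t$ original crossings, it sees $c$ at least $2y(c)-1$ times, whence $y(c)\le 1$.
\end{itemize}

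A balanced split, which the paper's wording allows, really fails in this three-part case. Color $A_1$ and $A_2$ by $1,1,3,\dots,t$ and $A_3$ by $2,2,3,\dots,t$, and let $s_j'$ carry exactly the crossings with $A_1$. Give color~$2$ to the gadget segments crossing the pieces, and color the middle gadget segments as forced by \cref{lem:generalized_equality}. This is a valid $2$-plane coloring in which two of the $s_j'''$ share color~$2$. So keep a single transferred crossing as you proposed; no further iteration is needed.

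Your remaining worries are minor. No other piece needs to be saturated, since $s_j''$ has only three crossings. A valid coloring exists by extending the canonical coloring of~$\Lambda'$ and giving the gadget segment on $s_j'$ the color that $s_j'$ otherwise sees only $k$ times.
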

\begin{proof}
    Observe that, by \cref{obs:drawing_with_t-blowup-conflict-graph}, $k+1$ sets of $t$ pairwise non-crossing segments, which cross all other segments (see \cref{fig:color_generator_a}) already fulfill \eqref{itm:dist_colors}, however every segment contains exactly $tk$ crossings.
    To achieve \eqref{itm:once_crossed} we choose one set of $t$ segments.
    We replace every segment $s_i$ in this set by two shorter collinear segments $s'_i$ and $s''_i$, such that the disjoint union of the crossings on $s'_i$ and $s''_i$ is exactly the set of crossings on $s_i$ and both $s'_i$ and $s''_i$ contain at least one crossing.
    Then we connect $s'_i$ and $s''_i$ with the generalized equality gadget which, by \cref{lem:generalized_equality}, forces them to use the same color in any $k$-plane $t$-coloring (the replacement is represented in \cref{fig:color_generator_b}).
    Observe that $s'_i$ and $s''_i$ now both contain at least two crossings, at most $tk$ crossings and one of the two segments contains strictly less than $tk$ crossings.
    We may assume that $s''_i$ contains less than $tk$ crossings.
    We add one more segment $s_i'''$ and connect it with the generalized equality gadget to $s_i''$ (see \cref{fig:color_generator_c} for the whole construction).
    Now $s_i'''$ has the same color as $s'_i, s''_i$ and the original $s_i$.
    The set of all segments $s_i'''$ with $1\leq i \leq t$ contains $t$ segments, which have pairwise distinct colors and contain exactly one crossing (the one with the equality gadget) and property \eqref{itm:once_crossed} is fulfilled.
\end{proof}

With this we can prove the following result on \emph{$t$-colorable exactly $k$-plane} drawings (that is $t$-colorable $k$-plane drawings that are not $t$-colorable $(k-1)$-plane).
\begin{lemma}\label{lem:k_to_k_plus_one}
    Given a drawing $\Gamma$ that is $t$-colorable exactly $k$-plane we can construct a drawing~$\Gamma'$ that is $t$-colorable exactly $k+1$-plane in polynomial time in the size of~$\Gamma$.
    If~$\Gamma$ is straight-line, so is~$\Gamma'$.
\end{lemma}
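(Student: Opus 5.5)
We build $\Gamma'$ from $\Gamma$ by attaching, to every edge $e$ of $\Gamma$, one private copy $\Lambda_e$ of a $k$-plane $t$-color generator from \cref{lem:generator_low_degree}. The generator is a $(k+1)$-plane object in the sense of the definition, i.e.\ the one built with parameter $k$ in the blow-up and equality gadgets. We place $\Lambda_e$ in a small neighbourhood of a point of $e$ that is far from all crossings and vertices of $\Gamma$. Its $t$ distinguished segments $s_1,\dots,s_t$ are incident to the outer face of $\Lambda_e$, so we can extend or translate them so that each crosses $e$ exactly once and crosses nothing else of $\Gamma$ or of the other generators. Straight-line realizability is preserved because the generator is straight-line and only needs a tiny region; all sizes are polynomial, since each generator has $O(t^2k)$ segments.

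The correctness argument has two directions. For the forward direction, suppose $\varphi$ is a $k$-plane $t$-coloring of $\Gamma$. We extend it to $\Gamma'$ by coloring each $\Lambda_e$ with some $(k+1)$-plane $t$-coloring of the generator. Its segments $s_i$ are crossed once inside the generator and once by $e$, so they are crossed at most twice in total. This needs $k+1\ge 2$, and care is needed because one crossing partner is $e$, whose color is arbitrary. To avoid this, we use the symmetry of the generator to choose its coloring so that the one segment crossing each $s_i$ has a color different from $\varphi(e)$. If that fails, the check is still harmless whenever $k+1\ge2$: each $s_i$ sees at most two crossings and hence at most two of any color. Meanwhile $e$ receives exactly one extra crossing of each color, giving at most $k+1$ per color. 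Hence $\Gamma'$ is $t$-colorable $(k+1)$-plane.

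For the backward direction, take any $(k+1)$-plane $t$-coloring of $\Gamma'$. Restricted to $\Lambda_e$ it is a $(k+1)$-plane coloring of the generator, so by property \eqref{itm:dist_colors} the segments $s_1,\dots,s_t$ carry all $t$ colors. Thus $e$ spends exactly one crossing of each color on them. The remaining crossings of $e$, which are exactly its crossings in $\Gamma$, therefore contain at most $k$ of each color, so the restriction to $\Gamma$ is $k$-plane. Consequently, $\Gamma$ is $t$-colorable $k$-plane if and only if $\Gamma'$ is $t$-colorable $(k+1)$-plane. Applying the same argument with $k-1$, using generators for $k-1$, gives the ``exactly'' statement. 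I would instead argue the ``not $k$-plane'' part directly: if $\Gamma'$ were $t$-colorable $k$-plane, then the generators colored with $k$-plane colorings must still give distinct colors. This is where I expect the main obstacle, since property \eqref{itm:dist_colors} is only guaranteed for $(k+1)$-plane colorings. I would try to show that a $k$-plane coloring is in particular $(k+1)$-plane, so property \eqref{itm:dist_colors} still applies and $e$ loses one crossing per color, forcing $\Gamma$ to be $(k-1)$-plane colorable, a contradiction.

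The remaining check is that \cref{lem:generator_low_degree} applies for the parameters in question ($t>3$ or $t,k>2$). This is needed when the lemma is iterated to lift \cref{thm:recognition_k_1,thm:recognition_2_2}.
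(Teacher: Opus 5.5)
Your proposal is correct and is essentially the paper's proof: one generator per edge $e$ makes $e$ spend exactly one crossing of each color. Hence a $k$-plane coloring of $\Gamma$ extends to a $(k+1)$-plane coloring of $\Gamma'$, and any $k$-plane coloring of $\Gamma'$ is trivially $(k+1)$-plane, so property~\eqref{itm:dist_colors} applies and leaves a $(k-1)$-plane coloring of $\Gamma$ — the step you flagged as the main obstacle is immediate, and the detour via generators for $k-1$ is unnecessary. Your added backward direction is exactly the equivalence \cref{thm:recognition_collected} needs and the paper leaves implicit. Just make sure the attached generator forces distinct colors in every $(k+1)$-plane coloring, i.e.\ is built from the $t$-blow-up of $K_{k+2}$, since the construction in \cref{lem:generator_low_degree} run with parameter $k$ only does so for $k$-plane colorings.
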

\begin{proof}
    We simply place a copy of a $(k+1)$-plane $t$-color generator~$g(e)$ (as constructed in \cref{lem:generator_low_degree}) next to every edge $e$ of~$\Gamma$ and let~$e$ intersect the $t$~independent edges of $g(e)$, which contain only one crossing.
    By \cref{lem:generator_low_degree}, in every $(k+1)$-plane $t$-edge-coloring of the resulting drawing~$\Gamma'$ these $k+1$ all have distinct colors.
    The resulting drawing $\Gamma'$ is clearly $t$-colorable $(k+1)$-plane by \cref{lem:generator_low_degree}.
    Assume towards contradiction that $\Gamma'$ is still $t$-colorable $k$-plane.
    Then by removing the $(k+1)$-plane $t$-color generators, we would obtain a $(k-1)$-plane $t$-coloring of $\Gamma$ which is a contradiction.
\end{proof}

Finally this leads to the remaining settings being NP-complete.

\recognitionCollected*
\begin{proof}
    This is proven by induction, where $t=2, k=2$ (cf. \cref{thm:recognition_2_2}) as well as $t\geq 3, k=1$ (cf. \cref{thm:recognition_k_1}) are the (infinitely many) base cases.
    For the induction step, assume by hypothesis that for some fixed values $t = t'$ and $k=k'$ it is \NP-hard to recognize if a given drawing is $t'$-colorable $k'$-plane.
    Given a drawing as an instance of this problem, by using \cref{lem:k_to_k_plus_one} we create a drawing that is $t'$-colorable $(k'+1)$-plane if and only if the original drawing was $t'$-colorable $k'$-plane, yielding a reduction and proving that deciding if a drawing is $t'$-colorable $(k'+1)$-plane is also \NP-hard.

    Proving \NP-containment works the same as in previous sections.
\end{proof}

\end{document}